\theoremstyle{plain}
\newtheorem{theorem}{Theorem}[section]
\newtheorem{cor}[theorem]{Corollary}
\newtheorem{prop}[theorem]{Proposition}
\newtheorem{lemma}[theorem]{Lemma}
\newcounter{claimCount}
\newenvironment{claim}{\medskip \vskip-1mm\noindent
  \refstepcounter{claimCount}\textbf{Claim~\arabic{claimCount}.}}{\medskip}
\newenvironment{claimproof}{\noindent\textit{Proof of
    Claim~\arabic{claimCount}.}}{\hfill\qedsymbol
    \tiny{Claim~\arabic{claimCount}}
\medskip}
\newtheorem*{teo}{Main Theorem}
\theoremstyle{definition}
\newtheorem{remark}[theorem]{Remark}
\newtheorem{fact}[theorem]{Fact}
\newtheorem{definition}[theorem]{Definition}
\newtheorem*{question}{Question}
\newtheorem*{notation}{Notation}
\newcommand{\nc}{\newcommand}
\nc{\Z}{\mathbb{Z}}
\nc{\N}{\mathbb{N}}
\nc\LL{\mathcal L}
\nc\LLD{\mathcal L_{D}}
\nc\FF{\mathcal F}
\nc\UU{\mathbb U}
\nc\CC{\mathcal C}
\nc\TC{\mathcal T_\CC}
\nc{\acfp}{\mathsf{ACFP}}
\nc{\Land}{\bigwedge}
\nc{\XX}{X_0,\dots,X_n}
\nc{\hs}{\mathcal H^n_Q}
\nc{\Hs}{\mathrm H^n_Q}
\renewcommand{\phi}{\varphi}
\DeclareMathOperator{\dep}{Ind}
\DeclareMathOperator{\RM}{RM}
\DeclareMathOperator{\smallrm}{rm}
\DeclareMathOperator{\RU}{U}
\DeclareMathOperator{\Rirr}{\mathrm{Rk}_{\CC}}
\DeclareMathOperator{\RF}{R_{\FF}}
\DeclareMathOperator{\I}{I}
\DeclareMathOperator{\V}{V}
\nc\Cb{\operatorname{Cb}}
\nc\lc{$\lambda$-closed\xspace}
\nc\Hil{\operatorname{Hil}}
\nc\ord{\operatorname{ord}}
\nc{\dcl}{\operatorname{dcl}}
\nc{\acl}{\operatorname{acl}}
\nc{\tr}{\operatorname{tr}}
\nc{\codim}{\operatorname{codim}}
\nc\inv{ ^{-1}}
\def\scl#1{\langle#1\rangle}
\def\cp #1{\ulcorner #1\, \urcorner}
\nc{\tp}{\operatorname{tp}}
\nc{\cf}{\text{cf. }}
\nc{\Sat}{\operatorname{Sat}}
\nc\Ann{\operatorname{Ann}}
\nc{\Gr}{\operatorname{Gr}}
\nc{\pk}{\operatorname{Pk}}
\nc{\pluck}{\operatorname{\lrcorner}}
\newcommand{\extp}{\@ifnextchar^\@extp{\@extp^{\,}}}
\def\@extp^#1{\mathop{\Land\nolimits^{\!#1}}}
\def\Ind#1#2{#1\setbox0=\hbox{$#1x$}\kern\wd0\hbox to 0pt{\hss$#1\mid$\hss}
\lower.9\ht0\hbox to 0pt{\hss$#1\smile$\hss}\kern\wd0}
\def\Notind#1#2{#1\setbox0=\hbox{$#1x$}\kern\wd0\hbox to
0pt{\mathchardef\nn="0236\hss$#1\nn$\kern1.4\wd0\hss}\hbox
to 0pt{\hss$#1\mid$\hss}\lower.9\ht0
\hbox to 0pt{\hss$#1\smile$\hss}\kern\wd0}
\def\ind{\mathop{\mathpalette\Ind{}}}
\def\nind{\mathop{\mathpalette\Notind{}}}
\def\ld{\mathop{\ \ \hbox to 0pt{\hss$\mid^{\hbox to
0pt{$\scriptstyle\mathrm{ld}$\hss}}$\hss}
\lower4pt\hbox to 0pt{\hss$\smile$\hss}\ \ }}
\begin{document}

\title{Noetherian theories}

\date{August 10, 2024}

\author{ Amador Martin-Pizarro and Martin Ziegler}
\address{Mathematisches Institut,
  Albert-Ludwigs-Universit\"at Freiburg, D-79104 Freiburg, Germany}
\email{pizarro@math.uni-freiburg.de}
\email{ziegler@uni-freiburg.de}
\thanks{Research partially supported by the program
PID2020-116773GB-I00. Additionally, the first author conducted
research supported by the Deutsche
  Forschungsgemeinschaft (DFG, German Research Foundation) -
  Project number 431667816, part of the
  ANR-DFG, program GeoMod. }
\keywords{Model Theory, Noetherianity, Equationality, Pairs of
fields}
\subjclass{03C45}

\begin{abstract}
  A first-order theory is Noetherian with respect to the collection of
  formulae $\FF$ if every definable set is a Boolean combination of
  instances of formulae in $\FF$ and the topology whose subbasis of
  closed sets is the collection of instances of arbitrary formulae in
  $\FF$ is Noetherian. We show the Noetherianity of the theory of
  proper pairs of algebraically closed fields in any characteristic
  with respect to the family of tame formulae as introduced in
  \cite{MPZ20}, thus answering a question which was left open there.
\end{abstract}

\maketitle

\section{Introduction}

Consider a first-order theory $T$ and a collection of formulae $\FF$
closed under finite conjunctions. The family $\FF$ is
\emph{Noetherian} if in every model $M$ of $T$ the family of instances
of arbitrary formulae in $\FF$ has the descending chain condition. The
theory $T$ is \emph{Noetherian} with respect to the Noetherian
collection of formulae $\FF$ if every formula $\phi(x; y)$ (in a fixed
partition of the variables into tuples $x$ and $y$) is equivalent
modulo $T$ to a Boolean combination of formulae $\psi(x; y)$ in $\FF$
(in the same partition).

After having submitted a first draft of this article, we were made
aware that Noetherian theories had already been defined by Hoffmann
and Kowalski \cite{HK23}, albeit in a slightly different formulation.
We will discuss the equivalent formulations in Remark \ref{R:HK}.

Quantifier elimination implies that the theory of algebraically closed
fields as well as the theory of differentially closed fields of
characteristic $0$ are Noetherian, since both the Zariski and the
Kolchin topologies are Noetherian. Every differentially closed field (in
characteristic $0$) expands the structure of a proper pair of
algebraically closed fields, where the distinguished algebraically
closed subfield is given by the \emph{constant} elements, whose
derivative is $0$.

In \cite{aG16, MPZ20} it was shown that proper pairs of algebraically
closed fields of characteristic $0$ are Noetherian. This follows from
the fact that definable sets in the pair are Boolean combination of
certain definable sets which happen to be Kolchin-closed in the
corresponding expansion as a differentially closed field. However,
this approach cannot be carried over to the case of positive
characteristic since the Kolchin topology for differentially closed
fields of positive characteristic is not Noetherian.

A weakening of Noetherianity is equationality, in which we only
require that each partitioned formula is a boolean combination of
\emph{equations} (in the same partition). A partitioned formula
$\phi(x; y)$ is an equation if in every model of $T$ the family of
finite intersections of instances $\phi(x,a)$ has the descending chain
condition. The authors showed in \cite{MPZ20} that the theory of pairs
of algebraically closed fields is equational.

In this paper, we will prove that the family of equations exhibited in
\cite{MPZ20} for the theory of proper pairs of algebraically closed
fields is in fact Noetherian, so every proper pair of algebraically
closed fields is Noetherian, regardless of the characteristic (\cf
Section 9 of the extended version of \cite{MPZ20}).

\begin{teo}\textup{(}Corollary  \ref{C:acfp_noether}\textup{)}~
  The theory of proper pairs of algebraically closed fields is
  Noetherian.
\end{teo}

The structure of the papers is as follows: In Section \ref{S:noether},
we explore the notion of Noetherianity. It will follow from Corollary
\ref{C:min} that Noetherianity is equivalent to the fact that every
type contains a minimal instance of a formula in $\FF$. Moreover, we
relate Morley rank to the foundational rank relative to $\FF$ and show
that equality holds under a mild condition, called \emph{Noetherian
isolation}. Section \ref{S:belles} contains a short overview of the
main properties of the theory of proper pairs of algebraically closed
fields, which will be used in Section \ref{S:noether_trick} in order
to give a proof of the Noetherianity of this theory. In Section
\ref{S:RM} we show that the theory of proper pairs of algebraically
closed fields has Noetherian isolation using Poizat's description of
Morley and Lascar ranks. Finally, in Section \ref{S:Hilbertpol}, we
use the techniques of Hilbert polynomials and schemes (in a
self-contained presentation) in order to explicitly exhibit the
minimal tame formula of a type.

\medskip
\noindent {\bf Acknowledgements.} We would like to express our sincere
gratitude to the anonymous referee of a first version of this draft
for the helpful comments and remarks which have clearly improved (in our
opinion) the presentation. We would also like to thank Martin Hils,
Piotr Kowalski and Frank O. Wagner for their comments which we have
tried to address as much as possible in this revised version.

\section{Noetherianity and chain conditions}\label{S:noether}

\begin{definition}
  A collection $\CC$ of subsets of a set $X$ is \emph{Noetherian} if
  it satisfies the following two conditions:
  \begin{itemize}
  \item The collection $\CC$ is closed under finite intersections and
    contains the set $X$ itself.
  \item The collection $\CC$ has the descending chain condition (DCC):
    every descending chain \[ C_0\supset C_1\supset \dotsb \supset
    C_n\supset \dotsb, \] with $C_n$ in $\CC$ for $n$ in $\N$,
    eventually stabilises, that is, there is some $n_0$ such that
    $C_n=C_{n+1}$ for all $n\ge n_0$.
  \end{itemize}
\end{definition}
It is easy to see that $\CC$ is has DCC if and only if every non-empty
subset of $\CC$ has a minimal element (with respect to set-theoretic
inclusion).

\begin{lemma}\label{L:ultra_Noether}
  Consider a collection $\CC$ of subsets of a set $X$ such that $\CC$
  contains $X$ and is closed under finite intersections. Then the
  following are equivalent;
  \begin{enumerate}[(a)]
  \item\label{I:ultra_Noether:dcc} The collection $\CC$ is Noetherian.
  \item\label{I:ultra_Noether:minimal} For every ultrafilter $\mathcal
    U$ on $X$, the intersection $\mathcal U\cap\CC$ has a minimal
    element $D$.
  \item\label{I:ultra_Noether:trick} Every ultrafilter $\mathcal U$ on
    $X$ contains a set $Y$ (possibly not in $\CC$) which is contained
    in every element of $\mathcal U\cap\CC$.
  \end{enumerate}
\end{lemma}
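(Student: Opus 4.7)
The plan is to prove $(a) \Rightarrow (b) \Rightarrow (c) \Rightarrow (a)$, with the main work in the last implication.

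For $(a) \Rightarrow (b)$, I would observe that $\mathcal U\cap\CC$ is nonempty (it contains $X$) and closed under finite intersections, since both $\mathcal U$ and $\CC$ are. By DCC applied to this nonempty subfamily, it has a minimal element $D$. In fact closure under intersection promotes this to a minimum: for any $E\in\mathcal U\cap\CC$, the intersection $D\cap E$ lies in $\mathcal U\cap\CC$ and is contained in $D$, so minimality forces $D\cap E = D$, i.e.\ $D\subseteq E$. This observation feeds directly into $(b) \Rightarrow (c)$: simply take $Y := D$. Then $Y\in\mathcal U$ and, by the argument just made, $Y$ sits inside every member of $\mathcal U\cap\CC$.

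The substantive implication is $(c) \Rightarrow (a)$, which I would prove contrapositively. Assume DCC fails and fix a strictly descending chain $C_0 \supsetneq C_1 \supsetneq \dotsb$ in $\CC$. For each $n$, choose an element $x_n \in C_n \setminus C_{n+1}$; this is possible precisely because the chain is strict. Set $A_n := \{x_k : k \ge n\}$, and let $\mathcal F$ be the filter on $X$ generated by the collection $\{A_n : n\in\N\}$ (it is a proper filter since each $A_n$ is nonempty and the family is decreasing). Extend $\mathcal F$ to an ultrafilter $\mathcal U$ on $X$. By construction $A_n \subseteq C_n$, so each $C_n$ belongs to $\mathcal U\cap\CC$.

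Now suppose (c) holds and pick $Y\in\mathcal U$ with $Y \subseteq C_n$ for every $n$; then $Y \subseteq \bigcap_{n}C_n$. But the key point is that the sequence $(x_n)$ escapes this intersection: each $x_n\notin C_{n+1}$, so $\{x_n : n\in\N\} \cap \bigcap_n C_n = \emptyset$. Hence $Y$ is disjoint from $A_0 = \{x_n : n\in\N\}$, forcing $X\setminus A_0 \in \mathcal U$, while $A_0\in\mathcal U$ by construction---a contradiction. The main obstacle is precisely this last step: choosing the right ultrafilter. The Fréchet-style construction on the witness sequence $(x_n)$ is what makes it work, because it guarantees that any set in $\mathcal U$ meets every tail $A_n$ and therefore cannot lie below $\bigcap_n C_n$.
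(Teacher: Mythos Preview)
Your proof is correct and follows essentially the same route as the paper: the implications $(a)\Rightarrow(b)\Rightarrow(c)$ are treated as immediate, and $(c)\Rightarrow(a)$ is proved contrapositively by building an ultrafilter that contains every $C_n$ yet avoids $\bigcap_n C_n$. The only cosmetic difference is in the filter base used---the paper takes $\{C_n\setminus Z\}_{n\in\N}$ with $Z=\bigcap_n C_n$ directly, while you pick witnesses $x_n\in C_n\setminus C_{n+1}$ and use the tails $A_n=\{x_k:k\ge n\}$; since $A_n\subseteq C_n\setminus Z$, your base simply refines the paper's, and the contradiction is reached the same way.
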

\noindent Since $\mathcal U\cap\CC$ is closed under finite
intersections, the subset $D$ in (\ref{I:ultra_Noether:minimal}) is
uniquely determined. We refer to $D$ as \emph{the minimal element of\/
$\mathcal U$ with respect to} $\CC$.

\begin{proof}
  The implications
  $(\ref{I:ultra_Noether:dcc})\Rightarrow(\ref{I:ultra_Noether:minimal})$
  and
  $(\ref{I:ultra_Noether:minimal})\Rightarrow(\ref{I:ultra_Noether:trick})$
  are immediate. For the implication
  $(\ref{I:ultra_Noether:trick})\Rightarrow
  (\ref{I:ultra_Noether:dcc})$, consider a strictly decreasing
  chain \[ C_0\supsetneq C_1\supsetneq\dotsb \supsetneq C_n\supsetneq
  \dotsb, \] of elements of $\CC$. Set $Z=\bigcap\limits_{n\in \N}
  C_n$ and notice that the collection $\{C_n\setminus Z\}_{n\in \N} $
  has the finite intersection property. Thus, there is some
  ultrafilter $\mathcal U$ on $X$ containing every $C_n\setminus Z$.
  Assume that $\mathcal U$ contains an element $Y$ as in
  (\ref{I:ultra_Noether:trick}). Since $\mathcal U\cap\CC$ has empty
  intersection, we deduce that $Y=\emptyset$ which gives the desired
  contradiction.
\end{proof}

\begin{definition}\label{D:irred}
  An element $C$ of $\CC$ is \emph{irreducible} if it is non-empty and
  cannot be written as a finite union $C=C_1\cup\dotsb\cup C_n$, with
  each $C_k\subsetneq C$ in $\CC$. Equivalently, whenever $C$ is
  contained in some finite union $\bigcup_{i=1}^m D_i$, with $D_i$ in
  $\CC$, then $C\subset D_i$ for some $1\le i\le m$.
\end{definition}

\begin{remark}\label{R:irred}
  If $\CC$ is Noetherian, it follows immediately from K\"onig's Lemma
  that every $C$ in $\CC$ can be written as an irredundant union of
  finitely many irreducible subsets $C_1, \dotsc, C_n$. The
  decomposition $C=C_1\cup\dotsb \cup C_n$ is \emph{irredundant} if
  $C_i\not\subset C_j$ for $i\ne j$. The irreducible subsets appearing
  in an irredundant expression of $C$ are unique up to permutation. We
  refer to them as the \emph{irreducible components} of $C$.
\end{remark}

A straightforward application of Lemma \ref{L:ultra_Noether} yields
the following result, which justifies our choice of terminology.

\begin{fact}\textup{(}\cite[Lemma
2.7]{PiSr84}\textup{)}~ If the collection $\CC$ of subsets of $X$ is
  Noetherian, then so is the family $\CC'$ of finite unions of
  elements of $\CC$.
\end{fact}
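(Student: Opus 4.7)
The plan is to check the two defining conditions for $\CC'$ to be Noetherian and invoke Lemma \ref{L:ultra_Noether}. The first condition is routine: $X\in\CC\subset\CC'$, and $\CC'$ is closed under finite intersections by distributivity, since for $A=\bigcup_{i}A_i$ and $B=\bigcup_{j}B_j$ with $A_i,B_j\in\CC$, we get $A\cap B=\bigcup_{i,j}(A_i\cap B_j)\in\CC'$ because $\CC$ itself is closed under finite intersections.

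For the DCC, I would use the ultrafilter characterization \ref{I:ultra_Noether:trick} of Lemma \ref{L:ultra_Noether}. Fix an ultrafilter $\mathcal U$ on $X$. Apply \ref{I:ultra_Noether:minimal} to the Noetherian family $\CC$ to obtain the minimal element $D$ of $\mathcal U\cap\CC$. I claim $D\in\mathcal U$ already serves as the witness $Y$ required by \ref{I:ultra_Noether:trick} for $\CC'$. Indeed, take an arbitrary $E\in\mathcal U\cap\CC'$ and write $E=E_1\cup\dotsb\cup E_n$ with each $E_k\in\CC$. Because $\mathcal U$ is an ultrafilter, at least one $E_k$ lies in $\mathcal U$, hence in $\mathcal U\cap\CC$, so by minimality $D\subseteq E_k\subseteq E$. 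Thus $D$ sits inside every element of $\mathcal U\cap\CC'$, and Lemma \ref{L:ultra_Noether} yields the Noetherianity of $\CC'$.

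The only substantive point is the ultrafilter-primeness observation: a finite union belongs to $\mathcal U$ iff one of its terms does. This is what allows us to reduce chain conditions on finite unions back to chain conditions on the original family. No genuine obstacle is anticipated; the lemma is essentially a clean reformulation of the fact that passing from $\CC$ to $\CC'$ does not change the ultrafilters' minimal traces in a meaningful way.
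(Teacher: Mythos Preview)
Your proof is correct and is precisely the ``straightforward application of Lemma~\ref{L:ultra_Noether}'' that the paper invokes without spelling out. The key step---using ultrafilter primeness to pass from a finite union in $\mathcal U$ to one of its terms, and then applying minimality of $D$ in $\mathcal U\cap\CC$---is exactly what is intended.
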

\noindent A topology is \emph{Noetherian} if the family of closed sets
is Noetherian. If $\CC$ is Noetherian, it follows that the family $\CC'$ of finite unions of
elements of $\CC$ consists of the closed sets of a Noetherian topology $\TC$ on $X$.

\begin{definition}\label{D:Fund_Rang}
  Given a Noetherian collection $\CC$ of subsets of $X$, we assign an
  ordinal rank $\Rirr(Y)$ to every subset $Y$ of $X$ as follows. For
  irreducible sets $C$ it is the foundational rank, that is,
  \begin{itemize}
  \item $\Rirr(C)\ge 0$ always holds (for the irreducible set $C$ is not empty);
  \item $\Rirr(C)\ge \alpha+1$ if and only if $\Rirr(D)\ge \alpha$ for
    some irreducible $D\subsetneq C$;
  \item $\Rirr(C)\ge \alpha$ with $\alpha$ limit if and only if
    $\Rirr(C)\ge \beta$ for every $\beta<\alpha$.
  \end{itemize}
  We set $\Rirr(C)$ the largest $\alpha$ with $\Rirr(C)\ge\alpha$.
  (Such an ordinal always exist by Noetherianity of the family). The
  rank of a closed set is the largest rank of its irreducible
  components, whilst $\Rirr(\emptyset)=-\infty$. Finally the rank of
  an arbitrary subset $Y$ is the rank of its closure $\overline Y$
  with respect to the topology $\TC$.
\end{definition}

\begin{remark}\label{R:Fundrang_abg}
  If $A$ is closed in $X$ with respect to $\TC$, we have that
  \[ \Rirr(A)=\max \{ \Rirr(C) \
  | \ C\subset A \text{ irreducible } \}.\] This follows from the fact
  that every irreducible subset of $A$ is contained in an irreducible
  component of $A$. Using the above equality, we deduce that \[
  \Rirr(Y_1\cup Y_2)=\max\{\Rirr(Y_1), \Rirr(Y_2)\}\] for any subsets
  $Y_1$ and $Y_2$ of $X$. Now, if the subset $Y$ of $X$ is
  \emph{constructible}, that is, it is a Boolean combination of closed
  sets, write $Y=\bigcup_{1\le i\le n} C_i\cap O_i$ for some
  irreducible closed subsets $C_i$ and open subsets $O_i$ with each
  $C_i\cap O_i\ne \emptyset$. It follows that that \[
  \Rirr(Y)=\max\limits_{1\le i\le n} \Rirr(C_i),\] since the closure
  of each $C_i\cap O_i$ is $C_i$.
\end{remark}
The following lemma will be used in the proof of
\ref{L:rk_formel_type}.
\begin{lemma}\label{L:Rang_des_Randes}
  If $Y$ is constructible and non-empty, then $\Rirr(\overline
  Y\setminus Y)<\Rirr(Y)$.
\end{lemma}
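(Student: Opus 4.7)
The plan is to exploit the normal form for constructible sets from Remark~\ref{R:Fundrang_abg}: write $Y=\bigcup_{i=1}^n C_i\cap O_i$ with each $C_i$ irreducible $\TC$-closed, each $O_i$ open, and $C_i\cap O_i\ne\emptyset$, so that $\Rirr(Y)=\max_{1\le i\le n}\Rirr(C_i)$. First I would identify $\overline Y$. Since $\overline{C_i\cap O_i}=C_i$ (every non-empty open subset of an irreducible space is dense) and $\bigcup_i C_i$ is already $\TC$-closed, one has $\overline Y=\bigcup_i C_i$, whence
\[
\overline Y\setminus Y \;=\; \bigcup_{i=1}^n (C_i\setminus Y).
\]

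Next, for each $i$ the inclusion $C_i\cap O_i\subseteq Y$ gives $C_i\setminus Y\subseteq C_i\cap(X\setminus O_i)$, which is a $\TC$-closed proper subset of $C_i$, proper precisely because $C_i\cap O_i$ is non-empty. Every irreducible component of $C_i\cap(X\setminus O_i)$ is therefore a proper irreducible subset of $C_i$ and so, by the inductive clause of Definition~\ref{D:Fund_Rang}, has $\Rirr$-rank strictly less than $\Rirr(C_i)$. Invoking Remark~\ref{R:Fundrang_abg} together with the monotonicity of $\Rirr$ under inclusion of closed sets (which is immediate from that same remark, since every irreducible subset of a smaller closed set is an irreducible subset of the larger one), one gets $\Rirr(C_i\setminus Y)<\Rirr(C_i)$ for every index $i$.

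To conclude, I would combine these bounds using the additivity $\Rirr(A\cup B)=\max\{\Rirr(A),\Rirr(B)\}$ of Remark~\ref{R:Fundrang_abg}:
\[
\Rirr(\overline Y\setminus Y)\;=\;\max_{1\le i\le n}\Rirr(C_i\setminus Y)\;<\;\max_{1\le i\le n}\Rirr(C_i)\;=\;\Rirr(Y).
\]
The strict inequality in the middle holds because, picking $i^{*}$ that achieves the right-hand maximum, $\Rirr(C_{i^{*}}\setminus Y)<\Rirr(C_{i^{*}})=\Rirr(Y)$, while $\Rirr(C_i\setminus Y)<\Rirr(C_i)\le\Rirr(Y)$ for the remaining indices. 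I do not foresee a serious obstacle: the entire argument is bookkeeping with Remark~\ref{R:Fundrang_abg} once one exploits the density of non-empty open subsets in irreducible spaces, the point being that the closure operation cannot create any new irreducible piece of top rank outside of $Y$ itself.
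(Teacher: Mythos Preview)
Your proof is correct and follows essentially the same approach as the paper: decompose $Y$ as in Remark~\ref{R:Fundrang_abg}, observe that $\overline Y\setminus Y\subset\bigcup_i C_i\setminus O_i$, and use that each $C_i\setminus O_i$ is a proper closed subset of the irreducible $C_i$ to get strict rank drop. You have simply spelled out more of the details (the identification of $\overline Y$, the final max argument) than the paper does.
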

\begin{proof}
  Write $Y=\bigcup_{1\le i\le n} C_i\cap O_i$ as in Remark
  \ref{R:Fundrang_abg} and notice that \[ \overline Y \setminus
  Y\subset \bigcup_{1\le i\le n} C_i\setminus O_i.\] Since
  $C_i\setminus O_i$ is a proper closed subset of $C_i$, we have that
  $\Rirr(C_i\setminus O_i) <\Rirr(C_i)$, which gives the desired
  inequality.
\end{proof}

\begin{definition}
  The \emph{degree} of a closed subset $A$ of $X$ is the number
  $\deg_\CC(A)$ of irreducible subsets of $A$ of rank the rank of of
  $A$. The degree of an arbitrary subset of $X$ is the degree of its
  closure.
\end{definition}

The following observation follows from the fact that an irreducible
subset of $\overline{Y^1}\cup\overline{Y^2}$ is contained in
$\overline{Y^1}$ or in $\overline{Y^2}$.
\begin{lemma}
  If $\Rirr(Y^1)> \Rirr(Y^2)$, then $\deg_\CC(Y^1\cup
  Y^2)=\deg_\CC(Y^1)$.\qed
\end{lemma}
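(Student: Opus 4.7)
The plan is to compare irreducible components of maximal rank in $\overline{Y^1}$ and in $\overline{Y^1}\cup\overline{Y^2}$ and show that the two families coincide. First I would note that $\deg_\CC$ and $\Rirr$ of an arbitrary set are, by definition, those of its closure, and Remark~\ref{R:Fundrang_abg} gives $\Rirr(Y^1\cup Y^2)=\max\{\Rirr(Y^1),\Rirr(Y^2)\}=\Rirr(Y^1)$; set $\alpha:=\Rirr(Y^1)$. The problem then reduces to producing a bijection between the rank-$\alpha$ irreducible subsets of $\overline{Y^1}\cup\overline{Y^2}$ and those of $\overline{Y^1}$. Along the way it is worth recalling that, by the successor clause of Definition~\ref{D:Fund_Rang}, a proper irreducible subset of an irreducible set has strictly smaller rank; hence a rank-$\alpha$ irreducible subset of a rank-$\alpha$ closed set is automatically one of its top-rank irreducible components, so the degree counts exactly what one expects.

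The key step is to invoke the dichotomy stated just before the lemma: every irreducible subset of $\overline{Y^1}\cup\overline{Y^2}$ lies entirely in $\overline{Y^1}$ or entirely in $\overline{Y^2}$. If $C$ is a rank-$\alpha$ irreducible component of $\overline{Y^1}\cup\overline{Y^2}$, then $C\subset\overline{Y^2}$ is ruled out since it would give $\alpha=\Rirr(C)\le\Rirr(Y^2)<\alpha$. Therefore $C\subset\overline{Y^1}$, and by the preceding remark $C$ is a rank-$\alpha$ irreducible component of $\overline{Y^1}$. Conversely, if $D$ is a rank-$\alpha$ irreducible component of $\overline{Y^1}$, then $D$ is irreducible inside $\overline{Y^1}\cup\overline{Y^2}$, and any strictly larger irreducible subset there must again lie in $\overline{Y^1}$ by the dichotomy, contradicting the maximality of $D$ among irreducibles of $\overline{Y^1}$. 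Thus $D$ is also a component of $\overline{Y^1}\cup\overline{Y^2}$, and the identity $C\mapsto C$ realises the desired bijection.

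There is no real obstacle: the argument is a direct application of the cited dichotomy together with careful bookkeeping around the definition of degree. The only subtle point is the reminder, flagged above, that top-rank irreducible subsets coincide with top-rank irreducible components, which follows from the strict monotonicity of $\Rirr$ on proper inclusions of irreducibles built into Definition~\ref{D:Fund_Rang}.
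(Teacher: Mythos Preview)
Your argument is correct and follows exactly the approach the paper indicates: the lemma is stated with a \qed and the sentence preceding it says it ``follows from the fact that an irreducible subset of $\overline{Y^1}\cup\overline{Y^2}$ is contained in $\overline{Y^1}$ or in $\overline{Y^2}$,'' which is precisely the dichotomy you invoke. Your write-up is more detailed than strictly necessary---since the degree is defined as the number of rank-$\alpha$ irreducible \emph{subsets}, the converse direction is immediate without the detour through components---but there is no gap.
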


Note that the degree of a constructible set $Y=\bigcup_{1\le i\le n}
C_i\cap O_i$ equals the number of different $C_i$'s of maximal rank.
This yields the following result.

\begin{lemma}\label{L:deg_Sum}
  Given two disjoint constructible subsets $Y^1$ and $Y^2$ of $X$ of
  the same rank, we have $\deg_\CC(Y_1\cup Y_2)=\deg_\CC(Y^1) +
  \deg_\CC(Y^2)$.
\end{lemma}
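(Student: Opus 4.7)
The plan is to exploit the formula stated just before the lemma: for any presentation $Y = \bigcup_{i=1}^n C_i\cap O_i$ of a constructible set (with $C_i$ irreducible closed, $O_i$ open and each $C_i\cap O_i$ non-empty), $\deg_\CC(Y)$ equals the number of \emph{distinct} $C_i$'s of maximal rank. So I would fix such representations $Y^1=\bigcup_i C^1_i\cap O^1_i$ and $Y^2=\bigcup_j C^2_j\cap O^2_j$ (provided by Remark \ref{R:Fundrang_abg}) and observe that $Y^1\cup Y^2$ is itself presented in this form by concatenating the two collections. Setting $\alpha:=\Rirr(Y^1)=\Rirr(Y^2)$, Remark \ref{R:Fundrang_abg} gives $\Rirr(Y^1\cup Y^2)=\alpha$, so $\deg_\CC(Y^1\cup Y^2)$ is the number of distinct $C^\ell_\bullet$'s of rank $\alpha$ appearing in the combined list.

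The heart of the argument is to show that no maximal-rank irreducible $C$ occurs in both lists. Assume for contradiction that $C=C^1_i=C^2_j$ with $\Rirr(C)=\alpha$. Then $C\cap O^1_i\subset Y^1$ and $C\cap O^2_j\subset Y^2$ are two non-empty open subsets of the irreducible set $C$; but in an irreducible space any two non-empty open subsets meet, so there is a point in $Y^1\cap Y^2$, contradicting $Y^1\cap Y^2=\emptyset$. Hence the two lists of maximal-rank irreducibles are disjoint, and the cardinalities add to give $\deg_\CC(Y^1\cup Y^2)=\deg_\CC(Y^1)+\deg_\CC(Y^2)$.

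There is no real obstacle here: the only thing to be careful about is to use a representation with each $C_i\cap O_i$ non-empty, so that the disjointness-of-open-sets argument inside an irreducible $C$ actually applies. This is already how the representations in Remark \ref{R:Fundrang_abg} are set up, so the argument goes through directly.
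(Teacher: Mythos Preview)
Your argument is correct and essentially identical to the paper's: both fix representations $Y^j=\bigcup_i C^j_i\cap O^j_i$ with each piece non-empty, reduce to showing that no irreducible $C$ can occur as some $C^1_i$ and some $C^2_k$, and derive a contradiction from the fact that two non-empty open subsets of an irreducible set must meet (the paper phrases this dually, as $C$ not being the union of the two proper closed sets $C\setminus O^1_i$ and $C\setminus O^2_k$). The only cosmetic difference is that you restrict attention to maximal-rank $C$'s, which is all that is needed, whereas the paper's argument happens to apply to all $C$'s.
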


\begin{proof}
  For $j$ in $\{1, 2\}$, write $Y^j=\bigcup_{1\le i\le n_i} C_i^j\cap
  O^j_i$ for some irreducible closed subsets $C^j_i$ and open subsets
  $O^j_i$ with $C^j_i\cap O^j_i\ne \emptyset$. We need only show that
  $C^1_i\neq C^2_k$ for all $i,k$. Assume otherwise. Since
  $C=C^1_i=C^2_k$ is not the union of the two closed proper subsets
  $C\setminus O^1_i$ and $C\setminus O^2_k$, it follows that $C\cap
  O^1_i$ and $C\cap O^2_k$ cannot be disjoint, which gives the desired
  contradiction.
\end{proof}
\bigskip

Fix now a  first-order theory $T$ in a language $\LL$.

\begin{notation}
  Consider a collection of partitioned formulae $\FF$ closed under
  renaming of variables, and finite conjunctions. For simplicity, we
  will always assume that the tautologically true \emph{sentence}
  $\top$ belongs to $\FF$. We allow dummy free variables, so $\top$
  may be considered as a formula in any partitioned set of variables.
\end{notation}

\begin{definition}\label{D:noether_F}
  The collection $\FF$ is \emph{Noetherian} if in every model $M$ of
  $T$ and for every length $n=|x|$, the family of instances \[ \CC=\{
  \phi(M, a) \ | \ \phi(x, y) \in \FF \ \& \ a\in M\} \] is
  Noetherian. We call a definable set $\theta(x,b)$ \emph{closed} if
  $\theta(M,b)$ belongs to $\CC$, that is, if $\theta(M,b)$ equals
  $\phi(M, a)$ for some $\phi(x,y)$ in $\FF$ and $a$ in $M$.
\end{definition}

If the theory $T$ is complete, it suffices to check that the family of
instances with parameters in some $\aleph_0$-saturated model has the
descending chain condition.

\begin{remark}\label{R:noeth_eq}
  Recall that a formula $\phi(x, y)$ is an \emph{equation} if the
  collection of finite intersections of instances of $\phi(x, y)$ has
  the DCC, or equivalently, if the collection of all conjunctions
  $\Land_{i=1}^n \phi(x, y_i)$ is Noetherian. Every formula in a
  Noetherian family $\FF$ is an equation.
\end{remark}
If $M$ is a model of $T$, every ultrafilter on $M^{|x|}$ determines a
type $p(x)$ over $M$, and thus over any subset $A$ of $M$. Hence, we
deduce from Lemma \ref{L:ultra_Noether} and the observation after
Definition \ref{D:noether_F} the following result.
\begin{cor}\label{C:min}
The following conditions are equivalent:
  \begin{enumerate}[(a)]
  \item The collection $\FF$ is Noetherian.
  \item Every type $p(x)$ over a model $M$ of $T$ contains a
    \emph{minimal formula} $\phi(x, a)$ with respect to $\FF$, that
    is, the formula $\phi(x, y)$ belongs to $\FF$ and \[ \psi(x, b)
    \text{ belongs to } p \text{ if and only if } \phi(M, a)\subset
    \psi(M, b)\] for every $\psi(x,z)$ in $\FF$ and every tuple $b$ in
    $M$.
    \item Every type $p(x)$ over a model $M$ of $T$ contains an
      $\LL_M$-formula $\theta(x, a)$ such that
  \[ \psi(x, b) \text{ belongs to } p
  \text{ if and only if } \theta(M, a)\subset \psi(M, b)\] for every
  formula $\psi(x, y)$ in $\FF$ and every tuple $b$ in $M$.
  \end{enumerate} \qed
\end{cor}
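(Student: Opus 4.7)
The plan is to transport the three equivalent conditions of Lemma \ref{L:ultra_Noether} across the standard correspondence between ultrafilters $\mathcal{U}$ on $M^{|x|}$ and complete types $p(x)$ over $M$: declare $\phi(x,a)\in p$ iff $\phi(M,a)\in\mathcal{U}$. Every type over $M$ arises this way, since the collection of instance sets of the formulas in $p$ has the finite intersection property and extends to an ultrafilter on $M^{|x|}$. Under this dictionary, the elements of $\mathcal{U}\cap\CC$ are precisely the instance sets $\phi(M,a)$ where $\phi(x,y)\in\FF$ and $\phi(x,a)\in p$.

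For $(a)\Rightarrow(b)$, I fix a type $p$ over $M$, pick an ultrafilter $\mathcal{U}$ on $M^{|x|}$ inducing $p$, and invoke condition (\ref{I:ultra_Noether:minimal}) of Lemma \ref{L:ultra_Noether} to obtain a minimal element $\phi(M,a)$ of $\mathcal{U}\cap\CC$; in particular $\phi(x,a)\in p$. Given $\psi(x,y)\in\FF$ and $b\in M$, if $\psi(x,b)\in p$ then $\phi(M,a)\cap\psi(M,b)\in\mathcal{U}\cap\CC$, using that $\FF$ is closed under finite conjunctions, and minimality of $\phi(M,a)$ forces $\phi(M,a)\subset\psi(M,b)$. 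Conversely, if $\phi(M,a)\subset\psi(M,b)$, then $\psi(M,b)$ is a superset of an element of $\mathcal{U}$ and hence lies in $\mathcal{U}$, so $\psi(x,b)\in p$.

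The implication $(b)\Rightarrow(c)$ is immediate by taking $\theta(x,a)$ to be the minimal formula $\phi(x,a)$ supplied by (b).

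For $(c)\Rightarrow(a)$, I verify condition (\ref{I:ultra_Noether:trick}) of Lemma \ref{L:ultra_Noether} for the collection $\CC$ of instances in each model $M$. Given an arbitrary ultrafilter $\mathcal{U}$ on $M^{|x|}$, let $p$ be the induced type and $\theta(x,a)\in p$ the $\LL_M$-formula provided by (c). Set $Y:=\theta(M,a)\in\mathcal{U}$. For any $\phi(M,b)\in\mathcal{U}\cap\CC$ with $\phi(x,y)\in\FF$, we have $\phi(x,b)\in p$, so the characterisation in (c) yields $Y\subset\phi(M,b)$, exactly as required. No substantial obstacle arises; the only point worth flagging is the use of closure of $\FF$ under conjunction in the minimality step of $(a)\Rightarrow(b)$, which is a standing hypothesis on $\FF$.
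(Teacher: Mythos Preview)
Your proof is correct and is precisely the argument the paper has in mind: the corollary is stated with a \qed and justified only by the sentence preceding it, namely that every ultrafilter on $M^{|x|}$ determines a type over $M$, so the three conditions of Lemma \ref{L:ultra_Noether} translate directly into (a), (b), (c). You have simply written out this translation in full.
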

Since every two minimal formulae in the type $p$ over $M$ are
equivalent, we will say that $\phi(x, a)$ is \emph{the} minimal
formula of $p$ (with respect to the Noetherian family $\FF$). In
condition (c), we do not require that $\theta(x, y)$ belongs to $\FF$,
so a type may admit two non-equivalent formulae $\theta(x, a)$ and
$\theta'(x, a')$ as in (c).

\begin{remark}\label{R:minimal_A}
  If $\FF$ is Noetherian, it is easy to see that every type $p(x)$
  over a subset $A$ of a model $M$ contains a closed formula $\psi(x,
  a)$, which is minimal among all closed formulae in $p$. We will
  refer to $\psi(x, a)$ as the \emph{minimal formula} of $p$.

If $A$ is an arbitrary subset of parameters and not necessarily an
elementary substructure of $M$, it need not be the case that the
minimal formula of $p$ is of the form $\phi(x,a)$ for some
$\phi(x,y)\in\FF$ and $a$ in $A$. The easiest example is the theory of
a $2$-element set with $\FF$ the family generated by $(x\doteq y)$.
For those readers who do not feel at ease with finite models (which is
the case of the the first author), we provide a more \emph{standard}
example: Consider the theory of a structure with two infinite
equivalence classes modulo a definable equivalence relation $E(x,y)$
and set $\FF$ the family generated by $\{(x\doteq y), E(x,y)\}$. This
family is clearly Noetherian by Corollary \ref{C:min}, since there are
only finitely many atomic formulae over any subset of parameters. If
$a$ is any element, the closed formula $\neg E(x, a)$ is clearly
invariant over $A=\{a\}$, yet it is not an instance over $A$ of an
$\FF$-formula.

We will see in Proposition \ref{P:tame_instance} that minimal closed
formulae for the theory of proper pairs of algebraically closed fields
are indeed equivalent to instances of tame formulae with the same
parameters.
\end{remark}

Using Definition \ref{D:irred}, we can define whether a closed formula
$\psi(x, a)$ in the model $M$ is irreducible. More generally, given a
subset $A$ of some model $M$ of $T$, we say that a closed formula with
parameters in $A$ is \emph{irreducible over $A$} if it cannot be
written as a proper union of a finite number of closed formulas with
parameters in $A$. If $A=M$, we will simply say that $\psi(x, a)$ is
irreducible.

\begin{remark}
  Let $A$ be a subset of a model $M$ of $T$. The minimal formula of a
  type over $A$ is irreducible over $A$.

  A closed formula with parameters in $M$ is irreducible over $M$ if
  and only it is irreducible over any elementary extension of $M$.
  Moreover, if $\theta(x,a)$ is any formula with parameters in $M$,
  then a closed formula $\phi(x,b)$ equals the topological closure
  $\overline{\theta(x,a)}$ of $\theta(x,a)$ in $M$ if and only
  $\phi(x,b)$ is the closure of $\theta(x,a)$ in any elementary
  extension of $M$. It follows that $\phi(x,b)$ can actually be
  defined using the same tuple $a$ of parameters.
\end{remark}

\begin{notation}
  Using Definition \ref{D:Fund_Rang}, given a formula $\theta(x, a)$
  with parameters in a model $M$ of $T$, we denote by $\RF\theta(x,
  a)$ the $\Rirr$-rank of the set $\theta(N,a)$ with respect to the
  Noetherian family $ \CC=\{ \phi(N, b) \ | \ \phi(x, y) \in \FF \ \&
  \ b\in N\}$, where $N$ is some $\aleph_0$-saturated elementary
  extension of $M$. We define the degree $\deg_\FF\theta(x, a)$
  similarly. The rank $\RF(p)$ of a type is the smallest rank of a
  formula in $p$. The degree $\deg_\FF(p)$ is the smallest degree of a
  formula in $p$ of rank $\RF(p)$.
\end{notation}
\noindent Since the closure of a formula has the same rank and degree,
it is easy to see that the rank and the degree of a type are exactly
the rank and the degree of its minimal formula. Whence, the degree of
a a type $p$ over a model is always $1$, since its minimal formula is
irreducible.

\begin{lemma}\label{L:rk_formel_type}
  Given a Noetherian family $\FF$, let $\theta(x,a)$ be a formula with
  parameters in a subset $A$ of a model $M$ of $T$. Then
  \[\RF\theta(x,a)=\max\{\RF(p)\mid \text{the type $p$ over $A$
    contains $\theta(x,a)$}\},\] where $\max \emptyset=-\infty$.
\end{lemma}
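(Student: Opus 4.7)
The plan is to prove the two inequalities separately. The inequality
\[\RF\theta(x,a) \ge \max\{\RF(p) \mid p \text{ is a type over } A \text{ containing } \theta(x,a)\}\]
is immediate from the definition of $\RF(p)$ as the smallest rank of a formula in $p$: any such $p$ satisfies $\RF(p) \le \RF\theta(x,a)$. This also takes care of the edge case when $\theta(N,a) = \emptyset$, as then no type over $A$ contains $\theta(x,a)$ and both sides equal $-\infty$ by our convention.

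For the reverse inequality, assume $\theta(x,a)$ is consistent and set $\alpha = \RF\theta(x,a)$. The goal is to exhibit a complete type $p(x)$ over $A$ with $\theta(x,a) \in p$ and $\RF(p) \ge \alpha$. I would consider the partial type
\[\Sigma(x) = \{\theta(x,a)\} \cup \{\neg\phi(x,b) \mid b \text{ is a tuple from } A,\ \RF(\theta(x,a) \wedge \phi(x,b)) < \alpha\},\]
which explicitly forbids every formula whose intersection with $\theta(x,a)$ would have rank strictly below $\alpha$.

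The crux is to show that $\Sigma$ is consistent; this is where I expect the main work. I would argue by contradiction via compactness: if $\Sigma$ were inconsistent, finitely many of the negated conjuncts would force
\[\theta(N,a) \subseteq \phi_1(N,b_1) \cup \dotsb \cup \phi_n(N,b_n),\]
with $\RF(\theta(x,a) \wedge \phi_i(x,b_i)) < \alpha$ for each $i$. Writing $\theta(N,a)$ as the finite union of the sets $\theta(N,a) \cap \phi_i(N,b_i)$, each of rank strictly less than $\alpha$, and iterating the additivity of $\Rirr$ from Remark \ref{R:Fundrang_abg}, one concludes $\RF\theta(x,a) < \alpha$, contradicting the choice of $\alpha$.

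Once $\Sigma$ is consistent, extend it to a complete type $p$ over $A$; by construction $\theta(x,a) \in p$. For any formula $\phi(x,b) \in p$ with $b$ from $A$, if $\RF\phi(x,b) < \alpha$, then a fortiori $\RF(\theta(x,a) \wedge \phi(x,b)) < \alpha$, whence $\neg\phi(x,b) \in \Sigma \subseteq p$, a contradiction. Thus $\RF(p) \ge \alpha$, and combined with the easy direction this gives the claimed equality. The key insight is the design of $\Sigma$: once one forbids exactly those formulae that would cut $\theta(x,a)$ down to a smaller rank, the additivity of rank over finite unions delivers both consistency and the final conclusion.
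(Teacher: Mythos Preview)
Your proof is correct and follows essentially the same approach as the paper's: both construct a partial type $\Sigma(x)$ by adjoining to $\theta(x,a)$ the negations of formulae of small rank, verify consistency via compactness together with the additivity of $\Rirr$ over finite unions (Remark~\ref{R:Fundrang_abg}), and conclude that any completion has rank $\alpha$. The only cosmetic difference is that the paper negates all formulae over $A$ of rank $<\alpha$, whereas you negate those $\phi(x,b)$ with $\RF(\theta(x,a)\wedge\phi(x,b))<\alpha$; your final step recovers the needed conclusion from this variant via monotonicity of the rank.
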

\begin{proof}
  This follows easily from Remark \ref{R:Fundrang_abg}: If $\alpha$ is
  the rank of $\theta(x,a)$ (so $\theta(x,a)$ is in particular
  consistent), then the set $\Sigma(x)$ of all negations of formulae
  of rank $<\alpha$ together with $\theta(x,a)$ is consistent. Any
  type over $A$ which extends $\Sigma(x)$ has rank $\alpha$.
\end{proof}

\begin{definition}
  A first-order theory $T$ is \emph{Noetherian} with respect to the
  Noetherian family of formulae $\FF$ if every partitioned formula
  $\psi(x,y)$ is equivalent modulo $T$ to a Boolean combination of
  formulae $\phi(x,y)$ in $\FF$.
\end{definition}

\begin{remark}\label{R:HK}
  In \cite{HK23} Hoffmann and Kowalski defined $T$ to be Noetherian
  with respect to a set $\Sigma$ of formulae if the following two
  conditions hold in every model $M$:
  \begin{itemize}
\item The collection of definable sets $\sigma(M,a)$, with $\sigma(x,
  y)$ in $\Sigma$ and $a$ in $M$, form the class of closed sets of a
  Noetherian topology on $M^{|x|}$.
\item For every subset $A$ of parameters, every definable subset of
  $M^{|x|}$ over $A$ is a Boolean combination of instances
  $\sigma(M,a)$, with $\sigma(x,y)$ in $\Sigma$ and $a$ in $A$.
  \end{itemize}
It is easy to see that both notions are equivalent: Given $\FF$, set
$\Sigma$ to be all finite disjunctions of formulae in $\FF$.
Conversely, given $\Sigma$ as above, let $\FF$ be the collection of
formulae of the form $\sigma(x,y)\land\psi(y)$, where $\sigma$ is in
$\Sigma$ and $\psi$ is arbitrary.
\end{remark}

\begin{remark}\label{R:Tnoether_eq}
  Recall that a theory $T$ is \emph{equational} if every partitioned
  formula is equivalent to a boolean combination of equations. We
  conclude from Remark \ref{R:noeth_eq} that Noetherian theories are
  equational.
\end{remark}
\begin{question}
As pointed out in \cite[p.\ 830]{MPZ20}, a theory is equational if and
only if every completion is. We do not know whether the same holds for
Noetherianity.
\end{question}

We will now explore some of the model-theoretic properties of theories
which are Noetherian and determine their stability spectrum. For that,
we will adapt the previous notions of rank and degree to formulae in
terms of their underlying definable set. It is easy to see that these
definitions do not depend on the choice of the model. Even if no
parameters occur in $\theta(x)$, we still need to choose an ambient
model $M$.

\begin{lemma}\label{L:noether_tt}
  Noetherian theories are totally transcendental.
\end{lemma}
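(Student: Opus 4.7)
The plan is to establish the inequality $\RM(\theta)\leq\RF(\theta)$ for every formula $\theta$ with parameters in some model of $T$. Since $T$ is Noetherian, every definable set is a Boolean combination of instances of $\FF$, hence constructible with respect to $\TC$; Remark \ref{R:Fundrang_abg} then guarantees that $\RF(\theta)$ is always an ordinal. Thus, once $\RM(\theta)\leq\RF(\theta)$ is proven, every formula has ordinal Morley rank and $T$ is totally transcendental.

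I would prove by induction on $\alpha$ the implication: if $\RM(\theta)\geq\alpha$, then $\RF(\theta)\geq\alpha$. The case $\alpha=0$ is clear and limit stages are immediate from the induction hypothesis. For the successor step, assume $\RM(\theta)\geq\beta+1$: in a sufficiently saturated extension this yields infinitely many pairwise disjoint definable sets $\theta_n\subset\theta$ with $\RM(\theta_n)\geq\beta$, and so $\RF(\theta_n)\geq\beta$ by induction. Suppose for a contradiction that $\RF(\theta)=\beta$. Then $\overline\theta$ decomposes into finitely many irreducible components (Remark \ref{R:irred}), only finitely many of them of top rank $\beta$. Each $\overline{\theta_n}$ contains an irreducible subset of rank $\beta$, which, being an irreducible rank-$\beta$ subset of an irreducible component of $\overline\theta$ of rank at most $\beta$, must coincide with that component. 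By the pigeonhole principle, some top-rank irreducible component $C$ of $\overline\theta$ arises in this way for infinitely many $n$.

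It then remains to derive a contradiction. For such $n$ I would invoke Lemma \ref{L:Rang_des_Randes}: since $\RF(\overline{\theta_n}\setminus\theta_n)<\RF(\theta_n)=\beta$ and $C\subset\overline{\theta_n}$, the set $C\setminus\theta_n$ is contained in a closed set of $\RF$-rank strictly below $\RF(C)=\beta$. Its closure in $\TC$ is therefore a proper closed subset of $C$, and so its complement $U_n$ inside $C$ is a nonempty $\TC$-open subset of $C$ entirely contained in $\theta_n$. The $U_n$'s inherit pairwise disjointness from the $\theta_n$'s, contradicting the irreducibility of $C$, since any two nonempty open subsets of an irreducible space must meet.

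The main obstacle is precisely this final geometric step: the pigeonhole isolates a common top-rank irreducible component $C$, but to squeeze out a contradiction one needs Lemma \ref{L:Rang_des_Randes} in order to promote the weak information "$C\subset\overline{\theta_n}$" into the strong statement "a nonempty $\TC$-open subset of $C$ is contained in $\theta_n$", after which irreducibility of $C$ closes the argument.
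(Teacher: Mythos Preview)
Your argument is correct and takes a genuinely different route from the paper. The paper argues by contradiction via the binary-tree characterisation of non-total-transcendence: it picks a node $\theta_s$ in the tree whose pair $(\RF,\deg_\FF)$ is lexicographically minimal, observes that both children must then have the same $\RF$-rank as $\theta_s$, and invokes Lemma~\ref{L:deg_Sum} (additivity of degree for disjoint constructible sets of equal rank) to force one child to have strictly smaller degree, contradicting minimality. Your proof instead establishes the inequality $\RM(\theta)\leq\RF(\theta)$ directly by induction on $\alpha$, using a pigeonhole on the finitely many top-rank irreducible components of $\overline\theta$ together with Lemma~\ref{L:Rang_des_Randes} and the fact that nonempty open subsets of an irreducible space meet. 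The paper's route is shorter and packages the combinatorics into the degree function; your route is more topological, avoids the degree altogether, and has the bonus of proving Corollary~\ref{C:RM_RF} outright (which the paper derives later, and by a different method via types over $\aleph_0$-saturated models).
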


\begin{proof}
  Assume for a contradiction that in some model $M$ of $T$ there is a
  binary tree of consistent formulae $\theta_s(x, a_s)$, with $a_s$ in
  $M$ for $s$ in $\mathbin{^{<\omega}2}$. Since $T$ is Noetherian,
  each definable set $\theta_s(x, a_s)$ is constructible. Choose thus
  an instance $\theta_s(x, a_s)$ in the tree whose $\RF$-rank and
  degree are least possible in the lexicographic order. By minimality
  of the rank, both $\theta_{s\frown 0}(x, a_{s\frown 0})$ and
  $\theta_{s\frown 1}(x, a_{s\frown 1})$ must have the same $\RF$-rank
  as $\theta_s(x, a_s)$. Now, the instances $\theta_{s\frown 0}(x,
  a_{s\frown 0})$ and $\theta_{s\frown 1}(x, a_{s\frown 1})$ are
  disjoint, so we deduce from Lemma \ref{L:deg_Sum} that the degree of
  $\theta_s(x, a_s)$ is strictly larger than $\deg_\CC(\theta_{s\frown
    0}(x, a_{s\frown 0}))$, which gives the desired contradiction.
\end{proof}

It follows that Noetherian theories are $\kappa$-stable for every
$\kappa\ge |\LL|$. We will provide a direct proof of this in terms of
the natural correspondence between types and their minimal formulae.

\begin{prop}\label{P:noether_omega}
  Suppose that the first-order theory $T$ is Noetherian with respect
  to $\FF$. For every subset $A$ of a model $M$ of $T$, there is
  bijection between types over $A$ and (equivalence classes with
  respect to logical equivalence of) irreducible formulas over $A$. In
  particular, every Noetherian theory is $\kappa$-stable whenever
  $\kappa\ge |\LL|$.
\end{prop}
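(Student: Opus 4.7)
The plan is to define the bijection by sending a type $p(x)$ over $A$ to its minimal closed formula $\psi_p$, whose existence is guaranteed by Remark \ref{R:minimal_A}; since the closed formulas in $p$ are closed under finite intersection and have the DCC inherited from $\FF$, the minimum $\psi_p$ is unique up to logical equivalence. Since $\psi_p$ lies in $p$ and $p$ is a type over $A$, the parameters of $\psi_p$ automatically lie in $A$. As noted in the subsequent remark, $\psi_p$ is irreducible over $A$: if $\psi_p$ decomposed as a proper finite union of closed formulas over $A$, completeness of $p$ would force one of the summands into $p$, contradicting minimality.

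For injectivity, suppose $p$ and $q$ share the same minimal closed formula $\psi$ over $A$. Then for every closed formula $\theta(x,b)$ with $b \in A$ one has $\theta \in p$ iff $\psi(M)\subset\theta(M)$ iff $\theta \in q$. Since $T$ is Noetherian with respect to $\FF$, every $\LL_A$-formula is equivalent modulo $T$ to a Boolean combination of instances of $\FF$-formulas over $A$, so $p$ and $q$ agree on every formula, hence $p=q$.

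For surjectivity, given an irreducible closed formula $\psi$ over $A$, form the partial type
\[
\Sigma(x) = \{\psi(x)\} \cup \{\neg\theta(x) \mid \theta \text{ closed over } A \text{ with } \theta(M)\subsetneq\psi(M)\}.
\]
If $\Sigma$ were inconsistent, compactness would yield finitely many $\theta_1,\dots,\theta_n$ with $\theta_i(M)\subsetneq\psi(M)$ and $\psi(M)\subset\theta_1(M)\cup\cdots\cup\theta_n(M)$; irreducibility of $\psi$ then forces $\psi(M)\subset\theta_i(M)$ for some $i$, a contradiction. Any completion of $\Sigma$ to a type $p$ over $A$ has $\psi$ as its minimal closed formula: if $\psi'$ denotes that minimum, then $\psi'(M)\subset\psi(M)$ by minimality, while a strict inclusion would place $\neg\psi'$ in $\Sigma\subset p$, contradicting $\psi'\in p$.

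The stability bound then follows by counting: there are at most $|A|+|\LL|\le\kappa$ many $\LL_A$-formulas, hence at most $\kappa$ equivalence classes of closed formulas over $A$, and so at most $\kappa$ complete types. The only non-routine step is surjectivity, where irreducibility is used exactly to secure the consistency of $\Sigma$; everything else amounts to bookkeeping around the minimal-formula assignment.
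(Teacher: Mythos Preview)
Your proof is correct and follows essentially the same approach as the paper: map each type to its minimal closed formula, and recover a type from an irreducible closed formula via a partial type asserting that formula together with the negations of smaller (or, in the paper, non-containing) closed formulas. The only cosmetic difference is that the paper's partial type $\Sigma_\phi$ negates every closed formula $\psi$ with $\phi\not\subset\psi$, which makes the completion visibly unique and folds injectivity into the construction of the inverse; you instead negate only proper closed subformulas and handle injectivity by a separate (and perfectly clean) argument.
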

\begin{proof}
  By Remark \ref{R:minimal_A}, given a type $p(x)$ over $A$, we denote
  its minimal formula by $\phi_p(x, a)$, which is unique up to
  equivalence and irreducible over $A$.

  Given now a closed formula $\phi(x, a)$ with parameters from $A$,
  the collection \[ \Sigma_\phi=\{\phi(x, a)\}\cup \{\neg\psi(x, a')
  \ | \ \psi(x, a') \text{ closed, $a'$ in $A$ and } \phi(M,
  a)\not\subset \psi(M, a')\} \] is consistent exactly if $\phi(x, a)$
  is irreducible over $A$. In that case, it admits a unique completion
  $p_\phi(x)$, since $A$-definable subsets are Boolean combinations of
  $A$--definable closed subsets.

  To conclude, we need only observe that $\phi(x, a)$ is the minimal
  formula of a type $p$ over $A$ if and only if $\Sigma_\phi \subset
  p$.
\end{proof}

It is not hard to see that the type $p(x)$ can be recovered from its
minimal formula $\phi(x,a)$ as the set of all formula $\theta(x,a')$
over $A$ such $\phi(x,a)$ is the topological closure of
$\phi(x,a)\land\theta(x,a')$.

\begin{remark}\label{R:Hils}
  In contrast to Remark \ref{R:Tnoether_eq} and Lemma
  \ref{L:noether_tt}, totally transcendental equational complete
  theories need not be Noetherian as the following example shows:
  Bonnet and Si-Kaddour \cite{BR00} constructed a superatomic Boolean
  algebra $B$ of Cantor-Bendixson rank $3$ which is not generated by
  any well-founded subset closed under intersections. Such a Boolean
  algebra must necessarily be uncountable and have rank at least 3.

  We will now consider the language $\LL$ consisting of a unary
  predicate $P_b$ for each element $b$ of $B$ as well as the complete
  theory $T$ of the structure whose universe consists of the atoms of
  B such that the each predicate $P_b$ is interpreted as the
  collection of atoms in $B$ contained in the element $b$ of $B$. It
  is easy to see that $T$ eliminates quantifier (for $T$ is a complete
  relational monadic theory) and furthermore that $P_b=P_{b_1}$ if and
  only if $b=b_1$ (for $B$ is atomic). An easy induction over the
  complexity of the formula yields that every definable subset of a
  model $m$ of $T$ is a disjunction of subsets of the form \[ \{
  (a_1,\ldots, a_n) \in M^n \ | \ \phi(a_1,\ldots, a_n) \land
  \bigwedge\limits_{i=1}^n P_{b_i}(a_i) \},\] for some $b_1,\ldots,
  b_n$ in $B$ and $\phi$ a quantifier-free formula in the empty
  language. In particular, the theory $T$ is equational and has Morley
  rank $3$, yet it is not Noetherian.

  Whilst it is conceivable that a totally transcendental equational
  complete theory in a countable language or of Morley rank $\le 2$
  has to be Noetherian, we have not further pursued this direction and
  leave the question open.
\end{remark}

\begin{cor}\label{L:deg_formel_type}
  If $M$ is a model, then $\deg_\FF\theta(x,a)$ is the number of types
  $p$ over $M$ containing $\theta(x,a)$ with $\RF(p)=\RF\theta(x,a)$.
\end{cor}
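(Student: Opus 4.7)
The plan is to leverage the bijection from Proposition \ref{P:noether_omega} between types over $M$ and irreducible closed formulas over $M$, together with the decomposition of closed sets into irreducible components from Remark \ref{R:Fundrang_abg}. Write $\alpha=\RF\theta(x,a)$. First I would decompose the topological closure of $\theta(M,a)$ as an irredundant union $C_1\cup\dotsb\cup C_m$ of irreducible closed subsets over $M$; by the remark preceding Lemma \ref{L:rk_formel_type}, each such $C_i$ is then also geometrically irreducible. Reindexing, assume $C_1,\dotsc,C_k$ are precisely those with $\Rirr(C_i)=\alpha$; by definition $\deg_\FF\theta(x,a)=k$, so the task reduces to establishing a bijection between $\{C_1,\dotsc,C_k\}$ and the set of types $p$ over $M$ of rank $\alpha$ containing $\theta(x,a)$.

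For each $i\le k$, Proposition \ref{P:noether_omega} furnishes a unique type $p_i$ over $M$ whose minimal formula cuts out $C_i$; in particular $\RF(p_i)=\alpha$. To verify that $\theta(x,a)\in p_i$, I would invoke the characterization noted right after Proposition \ref{P:noether_omega}: it suffices to check that $C_i$ equals the topological closure of $C_i\cap\theta(M,a)$. But $C_i\setminus\theta(M,a)$ is contained in $\overline{\theta(M,a)}\setminus\theta(M,a)$, whose rank lies strictly below $\alpha$ by Lemma \ref{L:Rang_des_Randes}; hence its closure is a proper closed subset of the rank-$\alpha$ irreducible $C_i$, confirming the required density.

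Conversely, given any type $p$ over $M$ of rank $\alpha$ containing $\theta(x,a)$, let $C_p$ denote the locus of its minimal formula, an irreducible closed set over $M$ with $\Rirr(C_p)=\alpha$. The same characterization yields $C_p\subset\overline{\theta(M,a)}=C_1\cup\dotsb\cup C_m$, and irreducibility (Definition \ref{D:irred}) forces $C_p\subset C_j$ for some $j$. The recursive clause of the foundational rank in Definition \ref{D:Fund_Rang} rules out $C_p\subsetneq C_j$ with both sides irreducible and $\Rirr(C_p)=\alpha$, since that would imply $\Rirr(C_j)\ge\alpha+1$. Hence $C_p=C_j$ with $j\le k$, and $p=p_j$.

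The only step demanding genuine care is the density argument verifying $\theta(x,a)\in p_i$, which hinges on Lemma \ref{L:Rang_des_Randes}; the rest is a formal consequence of the rank-degree formalism of Remark \ref{R:Fundrang_abg} and the type-formula bijection of Proposition \ref{P:noether_omega}.
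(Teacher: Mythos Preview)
Your proposal is correct and follows essentially the same approach as the paper: both arguments hinge on Lemma \ref{L:Rang_des_Randes} to reduce from $\theta(x,a)$ to its closure, and then invoke the bijection between types over $M$ and irreducible closed sets (Proposition \ref{P:noether_omega}) to identify rank-$\alpha$ types with rank-$\alpha$ irreducible components. The paper compresses your density argument into the single observation that a rank-$\alpha$ type contains $\theta(x,a)$ if and only if it contains $\overline{\theta(x,a)}$, but the content is the same.
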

\begin{proof}
  Let $\alpha$ be the $\RF$-rank of $\theta(x,a)$. By Lemma
  \ref{L:Rang_des_Randes}, a type over $M$ of rank $\alpha$ contains
  $\theta(x,y)$ if and only it contains the topological closure
  $\overline{\theta(x,a)}$. Now, the types over $M$ of rank $\alpha$
  containing $\overline{\theta(x,a)}$ correspond exactly to the
  irreducible components of $\overline{\theta(x,a)}$ of rank $\alpha$.
\end{proof}

\begin{lemma}
  Let $M$ be a model of the Noetherian theory $T$ and $p$ a type over
  a subset $A$ of $M$. The minimal formula of $p$ isolates it among
  all types over $A$ of rank at least $\RF(p)$.
\end{lemma}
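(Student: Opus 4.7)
The plan is to argue by contradiction. Suppose $q$ is a type over $A$ containing $\phi_p=\phi_p(x,a)$ with $\RF(q)\ge\RF(p)$ but $q\neq p$; I will exhibit a proper decomposition of $\phi_p$ into two closed formulas over $A$, contradicting the irreducibility of $\phi_p$ over $A$ that is guaranteed by $\phi_p$ being a minimal formula.

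I first collect the bookkeeping. Let $\phi_q=\phi_q(x,b)$ be the minimal formula of $q$. Since $\phi_p\in q$, the minimality of $\phi_q$ forces $\phi_q\subseteq\phi_p$; moreover $\RF(q)\le\RF(\phi_p)=\RF(p)\le \RF(q)$, so $\RF(\phi_q)=\RF(q)=\RF(\phi_p)$. If $\phi_q$ and $\phi_p$ were equivalent then the bijection of Proposition \ref{P:noether_omega} would force $q=p$, so $\phi_q\subsetneq\phi_p$. Form
\[E=\overline{\phi_p\wedge\neg\phi_q}\]
in some $\aleph_0$-saturated extension $N\succeq M$. By the Remark at the end of Section \ref{S:noether} the topological closure is definable with the same parameters, hence $E$ is a closed formula over $A$, and evidently $\phi_p=\phi_q\cup E$.

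The crucial step is to show $E\subsetneq\phi_p$. Decompose $\phi_p(N,a)=C_1\cup\dotsb\cup C_k$ irredundantly into its irreducible components in $N$. For each $i$, irreducibility of $C_i$ forces the closure of $C_i\setminus\phi_q$ to equal $C_i$ when $C_i\not\subseteq\phi_q$ and to be empty otherwise, so $E=\bigcup_{C_i\not\subseteq\phi_q}C_i$. If $E$ equalled $\phi_p$, then the irredundancy of the decomposition combined with the irreducibility of each $C_i$ would rule out any $C_i\subseteq\phi_q$, forcing $\RF(C_i\cap\phi_q)<\RF(C_i)$ for every $i$ and hence $\RF(\phi_q)=\max_i\RF(C_i\cap\phi_q)<\RF(\phi_p)$, contradicting the rank equality just established. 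Therefore $E\subsetneq\phi_p$, and $\phi_p=\phi_q\cup E$ is the sought proper decomposition. I expect the rank calculation certifying $E\subsetneq\phi_p$ to be the only non-routine step; everything else simply reassembles Proposition \ref{P:noether_omega}, the description of the minimal formula in Remark \ref{R:minimal_A}, and the remark on definability of closures over the original parameters.
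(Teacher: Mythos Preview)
Your argument is correct, but it takes a different route from the paper's proof. The paper argues directly: given $q\neq p$ containing $\phi_p$, pick any formula $\theta\in p$ implying $\phi_p$ with $\theta\notin q$; since $\theta$ and $\phi_p$ both have rank and degree equal to those of $p$, the additivity of degree for disjoint constructible sets (Lemma~\ref{L:deg_Sum}) forces $\RF(\phi_p\land\neg\theta)<\RF(p)$, and as $q$ contains $\phi_p\land\neg\theta$ one gets $\RF(q)<\RF(p)$ outright. Your proof instead exploits the irreducibility of $\phi_p$ over $A$: you bring in the minimal formula $\phi_q$ of $q$, pass to irreducible components in a saturated extension, and manufacture a proper $A$-decomposition $\phi_p=\phi_q\cup E$. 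This sidesteps the degree machinery entirely and makes the role of irreducibility over $A$ explicit, at the cost of a longer argument (the component analysis and the appeal to the remark that closures are definable over the original parameters). The paper's version is shorter precisely because Lemma~\ref{L:deg_Sum} has already packaged the relevant counting.
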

\begin{proof}
  Let $\phi(x,a)$ be the minimal formula of $p$. Choose another type
  $q\ne p$ over $A$ containing $\phi(x,a)$. There is a formula
  $\theta(x,b)$ in $p$ which implies $\phi(x,a)$ and does not belong
  to $q$. Now, both $\phi(x,a)$ and $\theta(x,b)$ have the same rank
  and degree as $p$, so by Lemma \ref{L:deg_Sum} the rank of
  $\phi(x,a)\land\neg\theta(x,b)$, and therefore also the rank of $q$,
  is strictly smaller than $\RF(p)$, as desired.
\end{proof}

Total transcendence means that Morley rank is ordinal-valued. For the
rest of the section, we will compare Morley rank to the foundational
rank $\RF$ and show equality of these ranks under some mild assumption
(see Definition \ref{D:isol}) on the Noetherian theory $T$.

\begin{cor}\label{C:RM_RF}
  Assume that $T$ is Noetherian with respect to $\FF$. Then, for every
  formula $\theta(x,a)$ with parameters in a model of $T$, we have
  that $\RM\theta(x,a)\le \RF\theta(x,a)$.
\end{cor}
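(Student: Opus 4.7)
The plan is to prove by induction on the ordinal $\alpha$ that $\RM\theta(x,a)\ge\alpha$ implies $\RF\theta(x,a)\ge\alpha$, working inside a sufficiently saturated elementary extension $N$ of the ambient model; the corollary is then immediate.

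The main preliminary step is to record two monotonicity facts for constructible subsets of $N^{|x|}$ (every definable set is constructible since $T$ is Noetherian), both proved by chasing irreducible components through closures. If $Y\subseteq Z$, then any irreducible component $D$ of $\overline Y$ is an irreducible closed subset of $\overline Z$, hence contained in some irreducible component $C$ of $\overline Z$; a straightforward transfinite induction from Definition \ref{D:Fund_Rang} shows that $D\subseteq C$ forces $\RF(D)\le\RF(C)$, and by taking the maximum over the components of $\overline Y$ we get $\RF Y\le\RF Z$. If moreover $\RF Y=\RF Z=\alpha$, then every rank-$\alpha$ component of $\overline Y$ must coincide with a rank-$\alpha$ component of $\overline Z$ (proper containment among irreducibles drops the rank), which yields $\deg_\FF Y\le\deg_\FF Z$.

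The base case $\alpha=0$ is just consistency of $\theta(x,a)$, and the limit case is immediate from the corresponding limit clauses in both rank definitions together with monotonicity. For the successor case, assume $\RM\theta(x,a)\ge\alpha+1$, so that inside $N$ there are infinitely many pairwise inconsistent formulas $\theta_i(x,a_i)$ refining $\theta(x,a)$ with $\RM\theta_i\ge\alpha$. By the inductive hypothesis $\RF\theta_i\ge\alpha$, and by monotonicity $\RF\theta\ge\alpha$. Suppose for contradiction that $\RF\theta=\alpha$; then monotonicity applied to each inclusion $\theta_i\subseteq\theta$ forces $\RF\theta_i=\alpha$. Iterating Lemma \ref{L:deg_Sum} on the pairwise disjoint constructible sets $\theta_1,\dotsc,\theta_N$, all of rank $\alpha$, gives
\[
\deg_\FF(\theta_1\cup\dotsb\cup\theta_N)=\sum_{i=1}^{N}\deg_\FF\theta_i\ge N,
\]
and since $\theta_1\cup\dotsb\cup\theta_N\subseteq\theta$, degree monotonicity yields $\deg_\FF\theta\ge N$ for every $N$, contradicting the finiteness of $\deg_\FF\theta$.

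The principal obstacle is bookkeeping rather than ideas: one must be careful in establishing the two monotonicity statements from Definition \ref{D:Fund_Rang} and Remark \ref{R:Fundrang_abg}, since $\RF$ and $\deg_\FF$ are defined via closures and irreducible components rather than directly on the given set. Once these are in hand, the successor step is closed by the clean additive behaviour of degree from Lemma \ref{L:deg_Sum}.
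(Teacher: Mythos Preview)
Your argument is correct, but it follows a different route from the paper's. The paper reduces to types via Lemma~\ref{L:rk_formel_type} and then inducts on $\alpha=\RF(p)$ using the minimal formula $\phi(x,a)$ of a type $p$ over an $\aleph_0$-saturated model: every other type $q$ containing $\phi(x,a)$ has strictly smaller $\RF$-rank (its minimal formula is a proper irreducible subformula), hence by induction $\RM(q)<\alpha$; since over an $\aleph_0$-saturated model only $p$ can have $\RM\ge\alpha$ among types containing $\phi(x,a)$, one gets $\RM(p)\le\alpha$. Your proof instead stays at the level of formulas, inducts on the implication $\RM\theta\ge\alpha\Rightarrow\RF\theta\ge\alpha$, and in the successor step uses the additivity of $\deg_\FF$ on disjoint constructible sets (Lemma~\ref{L:deg_Sum}) together with the monotonicity facts you set up to derive a contradiction with the finiteness of $\deg_\FF\theta$. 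This is essentially a rank-by-rank refinement of the argument for Lemma~\ref{L:noether_tt}. The paper's approach is shorter because the minimal-formula machinery is already in place and feeds directly into the Noetherian isolation discussion that follows; your approach is more self-contained combinatorially but requires the auxiliary monotonicity statements for $\RF$ and $\deg_\FF$, which the paper never isolates explicitly.
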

Note that both ranks are computed in reference to the ambient model
under consideration.
\begin{proof}
  By Lemma \ref{L:rk_formel_type}, it is enough to show that
  $\RM(p)\le \RF(p)$ for every type over an $\aleph_0$-saturated model
  $M$. Let $\alpha=\RF(p)$ and $\phi(x,a)$ the minimal formula of $p$.
  Then $\RF(q)<\alpha$ for all types $q\ne p$ containing $\phi(x,a)$.
  By induction on $\alpha$, we deduce that $\RM(q)<\alpha$ for all
  such $q$. Since $M$ is $\aleph_0$-saturated, it follows that
  $\RM(p)\le\alpha$.
\end{proof}

\begin{remark}
  Even for Noetherian theories of finite Morley rank, we need not
  always have equality between Morley rank and the foundational rank
  $\RF$. Indeed, consider the language $\LL$ consisting of a single
  unary predicate $P$ and the theory $T$ whose models are exactly the
  $\LL$-structures where $P$ denotes an infinite co-infinite subset.
  The theory $T$ is Noetherian with respect to the class $\FF$
  consisting of finite conjunctions of atomic formulas. However, the
  irreducible formula $x\doteq x$ has Morley rank $1$ (and Morley degree $2$),
  yet $\RF$-rank $2$.
\end{remark}

One of the reasons why equality of both ranks does not hold in the
above example is the fact that the unique non-algebraic $1$-type over
a model $M$ determined by the formula $\neg P(x)$ contains no
irreducible formula isolating it among all types over $M$ of Morley
rank at least $1$. We will therefore introduce the notion of
Noetherian isolation to ensure equality in Corollary \ref{C:RM_RF}.

\begin{definition}\label{D:isol}
  The Noetherian theory $T$ with respect to $\FF$ admits
  \emph{Noetherian isolation} if every type $p$ over a subset $A$ of a
  model of $T$ contains a closed formula $\phi(x, a)$ which isolates
  $p$ among all types over $A$ of Morley rank at least $\RM(p)$.
\end{definition}

\noindent Clearly $T$ admits Noetherian isolation if and only if the
minimal formula of $p$ isolates it among all types of Morley rank at
least $\RM(p)$.

\begin{theorem}\label{T:isol_equiv}
  The following are
  equivalent for a Noetherian theory $T$:
  \begin{enumerate}[(a)]
  \item\label{T:isol_equiv:noeth} The theory $T$ has Noetherian
    isolation.
  \item\label{T:isol_equiv:gleich} For every formula $\theta(x, a)$
    with parameters in a model of $T$, we have that $\RM\theta(x,
    a)=\RF\theta(x, a)$.
  \item\label{T:isol_equiv:rand} For every consistent formula
    $\theta(x,a)$ with parameters in some model $T$, we have that
    $\RM\bigl(\overline{\theta(x,a)}\land\neg\theta(x,a
    )\bigr)<\RM\theta(x,a)$
  \end{enumerate}
\end{theorem}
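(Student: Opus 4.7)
My plan is to establish the cycle $(\ref{T:isol_equiv:gleich})\Rightarrow(\ref{T:isol_equiv:rand})\Rightarrow(\ref{T:isol_equiv:noeth})\Rightarrow(\ref{T:isol_equiv:gleich})$, dispatching the easiest implication first and reserving the transfinite induction for the end. The first implication is immediate: Lemma~\ref{L:Rang_des_Randes} gives $\RF(\overline{\theta(x,a)}\setminus\theta(x,a))<\RF\theta(x,a)$ for any consistent $\theta(x,a)$, and $(\ref{T:isol_equiv:gleich})$ lets us replace $\RF$ by $\RM$ on both sides.

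For $(\ref{T:isol_equiv:rand})\Rightarrow(\ref{T:isol_equiv:noeth})$ I would start with a type $p$ over $A$ with minimal formula $\phi_p$ and a distinct $q\ne p$ over $A$ extending $\phi_p$, aiming to produce a single formula $\theta\in p$ with $\RM\theta=\RM(p)$, $\theta\vdash\phi_p$, and $\neg\theta\in q$. Total transcendence (Lemma~\ref{L:noether_tt}) provides some $\psi_0\in p$ attaining $\RM\psi_0=\RM(p)$; choosing any $\psi_1\in p\setminus q$ and setting $\theta=\psi_0\wedge\psi_1\wedge\phi_p$ does the trick. Since $\overline\theta$ is a closed formula in $p$ contained in $\phi_p$, minimality forces $\overline\theta=\phi_p$, and applying $(\ref{T:isol_equiv:rand})$ yields
\[
\RM(q)\le\RM(\phi_p\wedge\neg\theta)=\RM(\overline\theta\setminus\theta)<\RM\theta=\RM(p).
\]

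For $(\ref{T:isol_equiv:noeth})\Rightarrow(\ref{T:isol_equiv:gleich})$, Corollary~\ref{C:RM_RF} already yields $\RM\le\RF$, so I would prove $\RM\theta\ge\RF\theta$ by transfinite induction on $\alpha=\RF\theta(x,a)$. Lemma~\ref{L:rk_formel_type} (applied over a sufficiently saturated model) produces a type $p$ containing $\theta(x,a)$ with $\RF(p)=\alpha$, reducing the goal to $\RM(p)\ge\alpha$. For any proper irreducible closed $D\subsetneq\phi_p$, Proposition~\ref{P:noether_omega} supplies the type $p_D$ whose minimal formula is $D$; this $p_D$ extends $\phi_p$ and is distinct from $p$, so $(\ref{T:isol_equiv:noeth})$ forces $\RM(p_D)<\RM(p)$. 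The inductive hypothesis gives $\RM D=\RF D$, and as every $q'\ne p_D$ extending $D$ has minimal formula properly contained in $D$ and hence smaller $\RF$-rank, the same inductive hypothesis yields $\RM(q')<\RF D$ for such $q'$, so that $p_D$ realises the maximum Morley rank among types extending $D$, i.e.\ $\RM(p_D)=\RF D$. Hence $\RM(p)>\RF D$ for every such $D$; choosing $\RF D=\beta$ when $\alpha=\beta+1$, and $\RF D$ ranging cofinally in $\alpha$ in the limit case, finally delivers $\RM(p)\ge\alpha$.

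The main obstacle will be this last step: the bookkeeping that identifies $\RM(p_D)$ with $\RF D$ under the inductive hypothesis---which leans on the uniqueness guaranteed by Corollary~\ref{L:deg_formel_type} of the maximal-$\RF$-rank type extending an irreducible closed formula---must be combined with the successor/limit case split to close the induction cleanly.
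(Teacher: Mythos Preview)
Your proof is correct. The cycle and two of the three implications match the paper's, but your argument for $(\ref{T:isol_equiv:noeth})\Rightarrow(\ref{T:isol_equiv:gleich})$ is organised differently: you induct on $\alpha=\RF\theta$, locate a type $p$ with $\RF(p)=\alpha$, and for each proper irreducible $D\subsetneq\phi_p$ invoke the inductive hypothesis twice (once for $D$, once for the minimal formulae of the $q'\ne p_D$) to pin down $\RM(p_D)=\RF D$ before concluding $\RM(p)>\RF D$. The paper instead inducts on $\alpha=\RM(p)$: Noetherian isolation gives $\RM(q)<\alpha$ for every $q\ne p$ containing $\phi_p$, the inductive hypothesis immediately yields $\RF(q)<\alpha$, and since every proper irreducible $D\subsetneq\phi_p$ is the minimal formula of such a $q$, one gets $\RF\phi_p\le\alpha$ in one stroke. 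Your route works but costs the extra identification $\RM(p_D)=\RF D$; the paper's choice of induction parameter sidesteps that bookkeeping entirely. For $(\ref{T:isol_equiv:rand})\Rightarrow(\ref{T:isol_equiv:noeth})$ the paper is also a touch shorter: it starts from \emph{any} formula $\theta$ isolating $p$ among types of Morley rank $\ge\RM(p)$ and observes that $\overline\theta$ still isolates, rather than building $\theta$ from $\phi_p$ and a separating formula as you do.
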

\begin{proof}
 For
 (\ref{T:isol_equiv:noeth})$\Rightarrow$(\ref{T:isol_equiv:gleich}):
 By Lemma \ref{L:rk_formel_type} and Corollary \ref{C:RM_RF}, it is
 enough to show that $\RM(p)\ge \RF(p)$ for all types over an
 $\aleph_0$-saturated model $M$. We proceed by induction on
 $\alpha=\RM(p)$. By assumption, the minimal formula $\phi(x,a)$ of
 $p$ isolates $p$ among all types over $M$ of Morley rank at least
 $\alpha$, so $\RM(q)<\alpha$ for all types $q\ne p$ containing
 $\phi(x,a)$. By induction, we have that $\RF(q)<\alpha$, so it
 follows that $\RF\psi(x,b)<\alpha$ for all irreducible proper
 subformulas of $\phi(x,a)$, and thus $\RF\phi(x,a)\le\alpha$, as
 desired.

  The implication
  (\ref{T:isol_equiv:gleich})$\Rightarrow$(\ref{T:isol_equiv:rand})
  follows from Lemma \ref{L:Rang_des_Randes}, so we need only show
  (\ref{T:isol_equiv:rand})$\Rightarrow$(\ref{T:isol_equiv:noeth}).
  Consider a type $p(x)$ over $A$ and let $\theta(x, a)$ in $p$
  isolate it among types over $A$ of Morley rank at least $\RM(p)$.
  Since $\RM\bigl(\overline{\theta(x,a)}\land\neg\theta(x,a
  )\bigr)<\RM\theta(x,a)$, we have that $\overline{\theta(x,a)}$ is a
  closed formula which also isolates $p$ among types over $A$ of
  Morley rank at least $\RM(p)$. Hence, the theory $T$ has Noetherian
  isolation, as desired.
\end{proof}

Notice that the above proof yields immediately the following
corollary.
\begin{cor}\label{C:Noether_iso_Model}
  Suppose that every type over a model $M$ of $T$ is isolated by its
  minimal formula among all types over $M$ of Morley rank at least
  $\RM(p)$. Then $T$ has Noetherian isolation.\qed
\end{cor}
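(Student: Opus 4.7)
The plan is to trace through the equivalence chain (a)$\Rightarrow$(b)$\Rightarrow$(c)$\Rightarrow$(a) from Theorem \ref{T:isol_equiv} and observe that the proof of the first implication only invokes the isolation property for types over $\aleph_0$-saturated models, which is a special case of the hypothesis here. Thus the argument of the theorem can be applied verbatim, with the weaker hypothesis replacing the full strength of Noetherian isolation.

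Concretely, I would first derive condition (b), namely $\RM\theta(x,a)=\RF\theta(x,a)$ for every formula, by exactly the induction used for (a)$\Rightarrow$(b): for a type $p$ over an $\aleph_0$-saturated model $M$ with $\alpha=\RM(p)$, the hypothesis provides that the minimal formula $\phi(x,a)$ of $p$ isolates $p$ among all types over $M$ of Morley rank at least $\alpha$, so $\RM(q)<\alpha$, and inductively $\RF(q)<\alpha$, for every type $q\ne p$ extending $\phi(x,a)$; hence $\RF\phi(x,a)\le\alpha$, and combined with Corollary \ref{C:RM_RF} this gives $\RM(p)=\RF(p)$. Lemma \ref{L:rk_formel_type} then extends the equality to arbitrary formulas with parameters in any model.

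Given (b), condition (c) is immediate from Lemma \ref{L:Rang_des_Randes}: the boundary $\overline{\theta(x,a)}\land\neg\theta(x,a)$ has strictly smaller $\RF$-rank, hence strictly smaller Morley rank. Finally, the implication (c)$\Rightarrow$(a) from the theorem applies directly: for a type $p$ over an arbitrary subset $A$, total transcendence (Lemma \ref{L:noether_tt}) supplies some formula $\theta(x,a)\in p$ isolating $p$ among types over $A$ of Morley rank at least $\RM(p)$, and replacing $\theta(x,a)$ by its topological closure $\overline{\theta(x,a)}$ (which can be defined with the same parameter tuple $a$ by the remark preceding the notation of $\RF$) yields a closed $\LL_A$-formula in $p$ that still isolates $p$ among types of that rank, witnessing Noetherian isolation. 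The argument involves no new obstacles beyond recognising that the theorem's first implication never used parameters from a non-model.
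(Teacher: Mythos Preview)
Your proposal is correct and follows exactly the approach the paper intends: the sentence preceding the corollary says that ``the above proof yields immediately the following corollary,'' and your argument makes this explicit by observing that the implication (\ref{T:isol_equiv:noeth})$\Rightarrow$(\ref{T:isol_equiv:gleich}) in Theorem~\ref{T:isol_equiv} only invokes the isolation hypothesis for types over an $\aleph_0$-saturated model, after which (\ref{T:isol_equiv:gleich})$\Rightarrow$(\ref{T:isol_equiv:rand})$\Rightarrow$(\ref{T:isol_equiv:noeth}) runs unchanged. Your added detail that total transcendence supplies the isolating formula in the final step, and that the closure is definable over the same parameters, is a welcome clarification of points the paper leaves implicit.
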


\begin{remark}
  It is easy to see that a theory has Noetherian isolation if Morley
  rank and the foundational rank agree on closed formulas.
\end{remark}

\begin{cor}
  If the Noetherian theory $T$ has Noetherian isolation, then Morley
  degree of a formula $\theta(x, a)$ equals $\deg_\FF\theta(x, a)$.
\end{cor}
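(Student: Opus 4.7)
The plan is to exploit Theorem \ref{T:isol_equiv} to identify Morley rank with $\RF$-rank throughout, and then invoke Corollary \ref{L:deg_formel_type} to turn a count of types by one rank into a count by the other.

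Concretely, I would first observe that, since $T$ has Noetherian isolation, Theorem \ref{T:isol_equiv}(b) yields $\RM\psi(x,b)=\RF\psi(x,b)$ for every formula $\psi(x,b)$ with parameters in a model of $T$. Since both $\RM(p)$ and $\RF(p)$ for a type $p$ over a model are defined as the minimum of the corresponding rank over formulas in $p$, this formula-level equality transfers to the type level: $\RM(p)=\RF(p)$ for every complete type $p$ over any model $M$ of $T$.

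Next, I would fix an $\aleph_0$-saturated model $M$ containing the parameter tuple $a$. By Lemma \ref{L:noether_tt}, the theory $T$ is totally transcendental, so the Morley degree of $\theta(x,a)$ is exactly the number of complete types $p$ over $M$ containing $\theta(x,a)$ whose Morley rank attains $\RM\theta(x,a)$. By the equality of the two ranks, both on formulas and on types, this count coincides with the number of types $p$ over $M$ containing $\theta(x,a)$ with $\RF(p)=\RF\theta(x,a)$. Corollary \ref{L:deg_formel_type} identifies the latter number with $\deg_\FF\theta(x,a)$, and the result follows.

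I do not anticipate any serious obstacle: once the rank equality is in place, the argument is little more than a translation between two definitions of degree. The only small checks are that the passage from $\RM=\RF$ on formulas to $\RM=\RF$ on types is automatic via the minimum definition, and that in a totally transcendental theory Morley degree is well defined as the number of types of maximal Morley rank over a sufficiently saturated model containing the parameters of the formula.
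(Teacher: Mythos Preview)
Your proposal is correct and follows essentially the same route as the paper: pass to an $\aleph_0$-saturated model, use Theorem \ref{T:isol_equiv} to equate $\RM$ and $\RF$ on both formulas and types, and then invoke Corollary \ref{L:deg_formel_type} to identify the type count with $\deg_\FF\theta(x,a)$. The only cosmetic difference is that the paper first reduces to the case where $\theta(x,a)$ is closed (via Lemma \ref{L:Rang_des_Randes} and Theorem \ref{T:isol_equiv}(\ref{T:isol_equiv:rand})), whereas you apply Corollary \ref{L:deg_formel_type} directly to the original formula; since that corollary already covers arbitrary formulas, your version is in fact slightly more streamlined.
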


\begin{proof}
  By Lemma \ref{L:Rang_des_Randes} and Theorem \ref{T:isol_equiv} part
  (\ref{T:isol_equiv:rand}), we may assume that $\theta(x,a)$ is a
  closed formula. Furthermore, we may also assume that our ambient
  model $M$ is $\aleph_0$-saturated. Now, Morley degree of
  $\theta(x,a)$ is the number of types over $M$ containing
  $\theta(x,a)$ of Morley rank $\RM\theta(x,a)$. Since Morley rank and
  the foundational rank are the same, we have that Morley degree is
  exactly $\deg_\FF\theta(x,a)$, by Corollary \ref{L:deg_formel_type}.
\end{proof}

\begin{cor}
  If the Noetherian theory $T$ has Noetherian isolation, then for
  every type $p$ over a set $A$ with minimal formula $\phi(x, a)$ we
  have that $\RM(p)=\RF\phi(x, a)$ and its Morley degree is
  $\deg_\FF\phi(x,a)$.\qed
\end{cor}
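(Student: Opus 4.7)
The corollary follows by combining Theorem \ref{T:isol_equiv} with the preceding corollary on Morley degree, once both $\RM(p)$ and the Morley degree of $p$ are identified with the corresponding invariants of the minimal formula $\phi(x,a)$. The argument is essentially mechanical, so no substantial obstacle is anticipated; the content lies in using Noetherian isolation to force the relevant global extensions to restrict to $p$.

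For the rank equality, I would first invoke Theorem \ref{T:isol_equiv} part (\ref{T:isol_equiv:gleich}) to obtain $\RM\phi(x,a)=\RF\phi(x,a)$ under Noetherian isolation, so it suffices to show $\RM(p)=\RM\phi(x,a)$. The inequality $\RM(p)\le \RM\phi(x,a)$ is immediate from $\phi(x,a)\in p$. For the converse, Noetherian isolation implies that any type $q$ over $A$ containing $\phi(x,a)$ with $q\ne p$ satisfies $\RM(q)<\RM(p)$. Since $T$ is totally transcendental by Lemma \ref{L:noether_tt}, the Morley rank of $\phi(x,a)$ is witnessed by some type over $A$ containing $\phi(x,a)$; the only such type of rank at least $\RM(p)$ is $p$ itself, whence $\RM\phi(x,a)\le \RM(p)$.

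For the degree equality, the preceding corollary gives that the Morley degree of $\phi(x,a)$ equals $\deg_\FF\phi(x,a)$, so it is enough to check that the Morley degree of $p$ coincides with that of $\phi(x,a)$. I would compute both quantities as the number of global extensions, to a sufficiently saturated model containing $A$, of the relevant object whose Morley rank equals $\RM\phi(x,a)=\RM(p)$. On the one hand, any global extension of $p$ of maximal rank contains $\phi(x,a)$ and is therefore counted by the Morley degree of $\phi(x,a)$. Conversely, every global type of rank $\RM\phi(x,a)$ containing $\phi(x,a)$ restricts over $A$ to a type of rank at least $\RM(p)$ containing $\phi(x,a)$, which by Noetherian isolation must equal $p$, so the global type extends $p$. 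The two counts coincide, yielding the claim.
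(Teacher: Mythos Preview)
Your argument is correct. The paper itself gives no proof for this corollary (it is marked with a bare \qed), so there is nothing to compare against line by line; but the intended one-line deduction is shorter than what you wrote. Under Noetherian isolation, Theorem \ref{T:isol_equiv}(\ref{T:isol_equiv:gleich}) gives $\RM\theta=\RF\theta$ for every formula $\theta$, and the preceding corollary gives the same for degrees. Since both $\RM(p)$ and $\RF(p)$ are defined as the infimum of the corresponding rank over formulae in $p$ (and likewise for the degrees), the equalities $\RM(p)=\RF(p)$ and $\deg_{\mathrm{Morley}}(p)=\deg_\FF(p)$ are immediate; the identification $\RF(p)=\RF\phi(x,a)$ and $\deg_\FF(p)=\deg_\FF\phi(x,a)$ was already observed right after the Notation introducing $\RF$. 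Your route---showing $\RM(p)=\RM\phi(x,a)$ by invoking Noetherian isolation directly, and then matching global extensions for the degree---works just as well, and has the minor advantage of making explicit why the minimal formula witnesses the Morley invariants; it simply reproves through isolation what Theorem \ref{T:isol_equiv} already packaged.
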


We will conclude this section with an easy observation regarding
imaginaries in Noetherian theories (\cf \cite[Corollary 2.8]{aP07}).

\begin{remark}
  Given a complete Noetherian theory $T$ with respect to the
  Noetherian family $\FF$ of formulae, the theory $T$ has weak
  elimination of imaginaries after adding sorts for the canonical
  parameter of instances of formulae in $\FF$.

  Moreover, if $\FF$ is closed under finite disjunctions, then every
  imaginary is interdefinable with the canonical parameter of some
  instance of a formula in $\FF$.
\end{remark}

\begin{proof}
  Consider an $\emptyset$-definable equivalence relation $E$ and an
  equivalence class $E(x, a)$. By Remark \ref{R:irred}, the closure of
  the constructible subset $E(x, a)$ can be written as an irredundant
  union of its irreducible components $C_1, \dotsc, C_n$. Write each
  $C_i$ as $C_i=\phi_i(x, b_i)$ for some formula $\phi_i$ in $\FF$.
  Clearly, the tuple of canonical parameters $\bar \eta=(\cp{\phi_1(x,
    b_1)}, \dotsc, \cp{\phi_n(x, b_n)})$ is algebraic over the
  canonical parameter of the closure, and thus algebraic over
  $\cp{E(x, a)}$.

  Now, the canonical parameter of the closure is clearly definable
  over the tuple $\bar\eta$, so we need only show that $\cp{E(x, a)}$
  is definable over the canonical parameter of its closure. Otherwise,
  there would be an automorphism $\sigma$ such that constructible set
  $E(x, \sigma(a))$ differs (and thus is disjoint) from $E(x, a)$, but
  they have the same closure, which is not possible since $E(x, a)$
  and $E(x, \sigma(a))$ are constructible.

  If $\FF$ is now closed under finite disjunctions, then the closure
  of $E(x, a)$ is given by a single instance $\phi(x, c)$ of a formula
  $\phi(x,y)$ in $\FF$, so $\cp{E(x, a)}$ and $\cp{\phi(x, c)}$ are
  interdefinable, as desired.
\end{proof}

\section{Proper pairs of algebraically closed fields}\label{S:belles}

As mentioned in the introduction, we will show in Sections
\ref{S:noether_trick} and \ref{S:Hilbertpol} that the theory of proper
pairs of algebraically closed fields is Noetherian. Most of the
results mentioned here already appear in \cite{jK64, bP83}, unless
explicitly stated.

A \emph{pair $(K, E)$ of algebraically closed fields} is an extension
$E\subset K$ of algebraically closed fields. Every pair is a structure
in the language $\LL_P=\LL_{ring}\cup\{P\}$ with $E=P(K)$. If $E=K$,
the pair is bi-interdefinable with the theory $\mathrm{ACF}$ of
algebraically closed fields, which is clearly Noetherian, since
definable sets are Zariski constructible.

\medskip
{\bf Henceforth, we will restrict from now on our attention to proper
  pairs $(K, E)$ of algebraically closed fields, so $E\subsetneq K$.}

\medskip
We denote by $\acfp$ the $\LL_P$-theory of proper pairs of
algebraically closed fields, which expands the incomplete
$\LL_{ring}$-theory $\mathrm{ACF}$ of algebraically closed fields. We
will use the index $P$ to refer to the theory $\acfp$. In particular,
given a subset of parameters $A$ of $K$,  we
mean by $\dcl(A)$ and $\acl(A)$ its definable and algebraic closures in the pure field language,
whereas $\dcl_P(A)$ or $\acl_P(A)$ mean the corresponding objects in
the structure of the pair $(K, E)$. In particular, the independence
symbol $\ind$ refers to the algebraic independence in the reduct
$\mathrm{ACF}$.

As shown by Robinson \cite{aR59}, every completion of the theory
$\acfp$ is obtained by fixing the characteristic. Each of these
completions is $\omega$-stable of Morley rank $\omega$
\cite[p.\ 1659]{bP01}. The induced structure on the proper subfield
$E$ agrees with its structure as a pure field, so $E$ has Morley rank
$1$. Over any subset of parameters $A$ of $K$, there is a unique type
of Morley rank $\omega$ over $A$ given by an element (inside a
sufficiently saturated pair) which is transcendental over the
compositum field $E\cdot \mathrm{Quot}(A)$, where $ \mathrm{Quot}(A)$
denotes the subfield generated by $A$.

Delon \cite{fD12} provided a definable expansion of the language
$\LL_P$ for $\acfp$ to have quantifier elimination. The suggested
language is $\LLD=\LL_P\cup\{\dep_n, \lambda_n^i\}_{1\le i\le n\in
  \N}$, where $K\models \dep_n(a_1,\dotsc,a_n)$ if and only if
$a_1,\dotsc,a_n$ are $E$-linearly independent. Each function
$\lambda_n^i$ takes values in $E$. If $a_1,\dotsc,a_n$ are
$E$-linearly independent, but $a_0,a_1,\dotsc,a_n$ are not, its values
are determined by $a_0 = \sum\limits_{i=1}^n \lambda_n^i(a_0;
a_1\dotsc,a_n)\,a_i$. Otherwise, the value $\lambda_n^i(a_0;
a_1\dotsc,a_n)$ is zero.

\begin{notation}
  From now on, given a set $A$ of parameters, we will denote by
  $\lambda(A)$ the subfield of $E$ generated by the values of the
  $\lambda$-functions applied to tuples of $A$.
\end{notation}
Note that the subfield $E\cap k$ is always contained in $\lambda(k)$,
since an element $a$ belongs to $E$ if and only if $a=\lambda(a;1)$.
\begin{remark}\label{R:lambda_min}
  For every subfield $k$ of $K$, we have that $\lambda(k)$ is the
  smallest subfield $F$ of $E$ such that $F\cdot k$ and $E$ are
  linearly disjoint over $F$. This implies
  $\lambda(k(e))=\lambda(k)(e)$ for every tuple $e$ in $E$. In
  particular, \[ \lambda(k\cdot\lambda(k))=\lambda(k).\]
\end{remark}

\begin{lemma}\label{L:indep_lambda}
  Let $k\subset L$ be subfields of $K$. Then,
  \[L\ind_{\lambda(k)\cdot k}E\;\;\Leftrightarrow\;\;
  \lambda(L)\subset\acl\bigl(\lambda(k)\bigr).\]
\end{lemma}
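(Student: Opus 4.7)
My plan is to first reduce the stated equivalence to $L \ind_{\lambda(k)} E \iff \lambda(L) \subset \acl(\lambda(k))$. By Remark \ref{R:lambda_min}, the fields $\lambda(k) \cdot k$ and $E$ are linearly disjoint over $\lambda(k)$; since linear disjointness of two subfields over a common subfield implies algebraic independence (by additivity of transcendence degree for the fraction field of the tensor product), we get $\lambda(k) \cdot k \ind_{\lambda(k)} E$. Transitivity of $\ind$ then yields $L \ind_{\lambda(k) \cdot k} E \iff L \ind_{\lambda(k)} E$.

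Next, I would rewrite $L \ind_{\lambda(k)} E$ as the field-theoretic statement that $\acl(L \cdot \lambda(k))$ and $E$ are linearly disjoint over $\acl(\lambda(k))$. The point used throughout is that $\acl(\lambda(k)) \subset E$, since $E$ is algebraically closed and $\lambda(k) \subset E$, so $\acl(\lambda(k))$ is a subfield of $E$.

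For the forward direction, restricting to the subfield $L \cdot \acl(\lambda(k)) \subset \acl(L \cdot \lambda(k))$ preserves linear disjointness, so setting $F := \acl(\lambda(k)) \subset E$ gives a subfield $F$ of $E$ with $F \cdot L$ linearly disjoint from $E$ over $F$; the minimality characterisation of $\lambda(L)$ in Remark \ref{R:lambda_min} then forces $\lambda(L) \subset F$. For the reverse direction, Remark \ref{R:lambda_min} first supplies that $L \cdot \lambda(L)$ and $E$ are linearly disjoint, hence algebraically independent, over $\lambda(L)$, so $\acl(L \cdot \lambda(L))$ and $E$ are linearly disjoint over $\acl(\lambda(L))$. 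Since $\acl(\lambda(L)) \subset \acl(\lambda(k)) \subset E \cap \acl(L \cdot \lambda(L))$ by hypothesis (using $\lambda(k) \subset \lambda(L)$ from $k \subset L$), enlarging the base to $\acl(\lambda(k))$ preserves linear disjointness, and restricting to the subfield $\acl(L \cdot \lambda(k)) \subset \acl(L \cdot \lambda(L))$ then yields $L \ind_{\lambda(k)} E$.

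The main technical obstacle is the careful bookkeeping of algebraic closures against the various bases over which linear disjointness is asserted; specifically, the argument leans on two standard preservation properties of linear disjointness that must be applied in the right direction: if $A$ and $B$ are linearly disjoint over $C$, then so are $A'$ and $B$ for any subfield $C \subset A' \subset A$ (by flatness of field tensor products), and so are $A$ and $B$ over $C'$ for any intermediate $C \subset C' \subset A \cap B$ (by a direct factorisation argument on the tensor product).
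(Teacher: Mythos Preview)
Your proof is correct and follows essentially the same approach as the paper: reduce via transitivity (using $\lambda(k)\cdot k\ind_{\lambda(k)}E$ from Remark~\ref{R:lambda_min}) to the equivalence $L\ind_{\lambda(k)}E \Leftrightarrow \lambda(L)\subset\acl(\lambda(k))$, then rephrase the left-hand side as linear disjointness over $\acl(\lambda(k))$ and invoke the minimality characterisation of $\lambda(L)$ in Remark~\ref{R:lambda_min}. The paper compresses your forward and reverse directions into the single line ``$\acl(\lambda(k))\cdot L\ld_{\acl(\lambda(k))}E$ is equivalent, by Remark~\ref{R:lambda_min}, to $\lambda(L)\subset\acl(\lambda(k))$'', whereas you spell out the base-change and restriction steps for linear disjointness explicitly; the content is the same.
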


\begin{proof}
  By Remark \ref{R:lambda_min}, we have $\lambda(k)\cdot
  k\ind_{\lambda(k)}E$. Now, transitivity and monotonicity of
  non-forking yield that
  \[L\ind_{\lambda(k)\cdot
  k}E\;\;\Leftrightarrow\;\;L\ind_{\lambda(k)}E.\] The latter
  condition is equivalent to $\acl(\lambda(k))\cdot
  L\ld_{\acl(\lambda(k))}E$, which again by Remark \ref{R:lambda_min}
  is equivalent to $\lambda(L)\subset\acl(\lambda(k))$.
\end{proof}

\begin{definition}
  A subfield $k$ of $K$ is \emph{\lc} if $\lambda(k)$ is a subfield of
  $k$, or equivalently, if $k$ is linearly disjoint from $E$ over the
  subfield $E\cap k$, which again is equivalent to $E\cap
  k=\lambda(k)$.
\end{definition}

A straightforward application of Remark \ref{R:lambda_min} and Lemma
\ref{L:indep_lambda} to the subfields $ k(e)\subset k(a,e)$ yields the
following result:
\begin{cor}\label{C:ind_lambda}
  If $k$ is $\lambda$-closed, then for all tuples $a$ in $K$ and $e$
  in $E$, we have
  \[a\ind_{k(e)}E\;\;\Leftrightarrow\;\;
  \lambda(k(a))\subset\acl\bigl(\lambda(k)(e)\bigr)\] \qed
\end{cor}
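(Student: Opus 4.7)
The plan is to apply Lemma~\ref{L:indep_lambda} to the extension of subfields $k(e)\subset k(a,e)$ and then simplify both sides of the resulting equivalence using Remark~\ref{R:lambda_min} together with the hypothesis that $k$ is $\lambda$-closed.

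First I would compute the base field on the left. Since $e$ is a tuple from $E$, Remark~\ref{R:lambda_min} gives $\lambda(k(e))=\lambda(k)(e)$, and the $\lambda$-closedness of $k$ yields $\lambda(k)\subset k$, so $\lambda(k(e))\subset k(e)$; in particular $\lambda(k(e))\cdot k(e)=k(e)$. Therefore Lemma~\ref{L:indep_lambda} applied to $k(e)\subset k(a,e)$ specialises to
\[
k(a,e)\ind_{k(e)}E\;\;\Longleftrightarrow\;\;\lambda(k(a,e))\subset\acl\bigl(\lambda(k)(e)\bigr).
\]

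Next I would simplify each side so it matches the statement of the corollary. The left-hand side is clearly equivalent to $a\ind_{k(e)}E$. For the right-hand side, a second application of Remark~\ref{R:lambda_min}, now to the extension of $k(a)$ by the $E$-tuple $e$, gives $\lambda(k(a,e))=\lambda(k(a))(e)$. Since $e$ already lies inside $\lambda(k)(e)$, the containment $\lambda(k(a))(e)\subset\acl(\lambda(k)(e))$ is equivalent to $\lambda(k(a))\subset\acl(\lambda(k)(e))$, which is the desired right-hand side.

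There is no real obstacle here: once Lemma~\ref{L:indep_lambda} is available, the corollary is essentially bookkeeping, using $\lambda$-closedness of $k$ to absorb $\lambda(k(e))$ into $k(e)$ and using that $e\in E$ to push the tuple $e$ past the $\lambda$-operator on both sides. The one point that needs a moment of care is that both invocations of Remark~\ref{R:lambda_min} require $e$ to be a tuple from $E$, which is exactly the hypothesis on $e$.
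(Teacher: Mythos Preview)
Your proposal is correct and follows exactly the approach indicated in the paper: apply Lemma~\ref{L:indep_lambda} to the extension $k(e)\subset k(a,e)$ and use Remark~\ref{R:lambda_min} (together with the $\lambda$-closedness of $k$) to simplify both sides. The paper states precisely this without spelling out the bookkeeping you have provided.
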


\begin{fact}\label{F:Delon}\textup{(}\cite[Th\'eor\`eme 1]{fD12}\textup{)}~
  The fraction field of an $\LLD$-substructure is \lc. The
  $\LL_P$-type of a \lc field (seen as a long tuple with respect to
  some fixed enumeration) is uniquely determined by its
  quantifier-free $\LL_P$-type, so the theory $\acfp$ has quantifier
  elimination in the language $\LLD$.
\end{fact}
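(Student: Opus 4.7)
The statement has two parts: (a) fraction fields of $\LLD$-substructures are \lc, and (b) $\acfp$ eliminates quantifiers in $\LLD$. My plan is to prove (a) from linear disjointness and (b) from a back-and-forth on $\LLD$-isomorphisms between \lc subfields of two $\aleph_1$-saturated proper pairs.

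For (a), I would first observe that the axioms for $\dep_n$ and $\lambda_n^i$ say exactly that every $E$-linear relation among elements of an $\LLD$-substructure $A$ is already witnessed by coefficients in $E \cap A$, i.e., $A$ is linearly disjoint from $E$ over $E \cap A$. Since linear disjointness is preserved under localization, the fraction field $k = \mathrm{Quot}(A)$ is linearly disjoint from $E$ over $E \cap A = E \cap k$, hence is \lc.

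For (b), I would fix two $\aleph_1$-saturated proper pairs $(K_i,E_i)$, $i=1,2$, of the same characteristic and perform back-and-forth on $\LLD$-isomorphisms $f\colon A_1 \to A_2$ between \lc subfields (using (a) to pass freely to fraction fields). To extend $f$ to include an arbitrary $a \in K_1$, I would split into three cases: (i) $a \in E_1$; (ii) $a$ is transcendental over $A_1\cdot E_1$; and (iii) $a$ is algebraic over $A_1\cdot E_1$ but not in $A_1$. In case (i), I would pick $a' \in E_2$ with the same ACF-type as $a$ over $\lambda(A_1)\cong\lambda(A_2)$ by saturation; the extension $A_1(a)$ remains \lc since $\lambda(A_1(a)) = \lambda(A_1)(a)$. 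In case (ii), I would pick any $a' \in K_2$ transcendental over $A_2\cdot E_2$; a direct polynomial calculation using transcendence reduces every $E_1$-linear dependence in $A_1(a)$ to one in $A_1$, so $A_1(a)$ stays linearly disjoint from $E_1$ over $\lambda(A_1)$.

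The main obstacle is case (iii). I would let $q(X) \in (A_1\cdot E_1)[X]$ be the minimal polynomial of $a$ over $A_1\cdot E_1$ and let $e \in E_1^{<\omega}$ collect the finitely many $E_1$-coefficients occurring in $q$, so that $q \in A_1(e)[X]$. First extend $f$ to include $e$ via case (i), producing a \lc extension $A_1(e)$ with image $e'\in E_2$. Then, since $K_2$ is algebraically closed, pick any root $a' \in K_2$ of the $f$-image polynomial $q' \in A_2(e')[X]$. To confirm that $A_1(e)(a)$ is \lc, I would compute
\[
A_1(e)(a) \otimes_{\lambda(A_1(e))} E_1 \;\cong\; (A_1(e)\cdot E_1)[X]/(q) \;=\; (A_1\cdot E_1)[X]/(q),
\]
which is a field by irreducibility of $q$ over $A_1\cdot E_1$, so the tensor product embeds into $K_1$ and establishes linear disjointness of $A_1(e)(a)$ from $E_1$ over $\lambda(A_1(e))$. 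Iterating these extension moves on a dense sequence of elements completes the back-and-forth, yielding quantifier elimination.
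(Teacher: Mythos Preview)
The paper does not prove this statement: it is recorded as a Fact with a citation to Delon~\cite{fD12}. Your back-and-forth strategy is the standard route to Delon's result, and part~(a) together with cases~(i) and~(ii) are essentially correct as sketched.

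Case~(iii), however, has a genuine gap. You verify that $A_1(e)(a)$ is \lc, but the back-and-forth requires the \emph{target} $A_2(e')(a')$ to be \lc as well, and for the extended map to preserve $P$. Both hinge on $q'=f(q)$ being irreducible over $A_2\cdot E_2$, which you never address. This does not follow formally from your setup: the isomorphism $f\colon A_1(e)\to A_2(e')$ matches $\lambda(A_1)(e)$ with $\lambda(A_2)(e')$, but irreducibility of $q$ over $A_1\cdot E_1$ is a priori a statement about all of $E_1$. The missing ingredient is that for a field extension $L/F$ and an algebraically closed $E\supseteq F$, whether $L\otimes_F E$ is a domain depends only on the abstract pair $(L,F)$: it is equivalent to $L\otimes_F\bar F$ being a domain, since any two field extensions of an algebraically closed base have a tensor product which is a domain. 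Applying this with $L=A_1(e)(a)\cong A_2(e')(a')=L'$ over $F\cong F'$ transports the domain property to side~2, whence $q'$ is irreducible over $A_2\cdot E_2$.

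A smaller point: your displayed isomorphism
\[
A_1(e)(a)\otimes_{\lambda(A_1(e))}E_1\;\cong\;(A_1\cdot E_1)[X]/(q)
\]
is not correct when $A_1(e)$ is transcendental over $\lambda(A_1)(e)$. The left-hand side is $(A_1(e)\otimes_F E_1)[X]/(q)$, and $A_1(e)\otimes_F E_1$ is in general a proper subring of its fraction field $A_1\cdot E_1$. What you actually need is only that the left-hand side is a \emph{domain}, and for that it suffices to note that it injects (via localisation) into the field $(A_1\cdot E_1)[X]/(q)$.
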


\begin{remark}\label{R:lambda_closed}
  Every $\LL_P$-definably closed subset of a model $(K, E)$ of $\acfp$
  is \lc as a subfield of $K$. Moreover, if a subfield $k$ is \lc,
  then its $\LL_P$-definable closure is its inseparable closure
  $k^\mathrm{ins}$ and its $\LL_P$-algebraic closure is the field
  algebraic closure $k^\mathrm{alg}$.
\end{remark}

We now introduce (cf. \cite[Definition 6.3]{MPZ20}) the collection of
\emph{tame formulae}, which will be shown in Sections
\ref{S:noether_trick} and \ref{S:Hilbertpol} to be Noetherian.

\begin{definition}\label{D:tame}
  A \emph{tame formula} on the tuple $x$ of variables is an
  $\LL_P$-formula of the form
  \[\exists\, \zeta \in P^r \biggl(\neg\,\zeta\doteq 0\;
  \land\; \Land\limits_{j=1}^{m} q_j(x,\zeta)\doteq 0 \biggr)\] for
  some polynomials $q_1,\dotsc, q_m$ in $\Z[X,Z]$, homogeneous in the
  variables $Z$.
\end{definition}

\begin{fact}\label{F:tame1}\textup{(}\cite[Lemma 6.4, Corollaries
    6.5 and 6.8 \& Proposition 7.3]{MPZ20}\textup{)}~
  \begin{itemize}
  \item Given polynomials $q_1,\dotsc, q_m$ in $\Z[X,Y,Z]$ homogeneous
    in the variables $Y$ and $Z$ separately, the $\LL_P$-formula
    \[\exists\, \xi \in P^r\; \exists\,\zeta \in
    P^s\Bigl(\neg\,\xi\doteq 0\; \land\; \neg\,\zeta\doteq
    0\;\land\;\Land_{j=1}^mq_j(x,\xi,\zeta)\doteq 0 \Bigr)\] is
    equivalent in $\acfp$ to a tame formula.
  \item The collection of tame formulae is, up to equivalence, closed
    under finite conjunctions and disjunctions.
  \item Every tame formula, in any partition of the variables, is an
    equation (cf. Remark \ref{R:noeth_eq}).
  \item Every $\LL_P$-formula is equivalent modulo $\acfp$ to a
    Boolean combination of tame formulae, so $\acfp$ is equational.
  \end{itemize}
\end{fact}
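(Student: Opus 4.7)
My plan is to prove the four items in order, with the Segre embedding as the main geometric tool and Delon's quantifier elimination serving as the entry point for the hardest part.

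For the first item, I would first normalize all of the bi-homogeneous polynomials $q_j(x,y,z)$ to a common bi-degree. If $q_j$ has bi-degree $(a_j,b_j)$ in $(Y,Z)$ and we set $d=\max_j\max(a_j,b_j)$, then replace each equation $q_j(x,\xi,\zeta)\doteq 0$ by the enlarged system $\xi^\alpha\zeta^\beta\cdot q_j(x,\xi,\zeta)\doteq 0$ for all multi-indices $\alpha,\beta$ with $|\alpha|=d-a_j$ and $|\beta|=d-b_j$; since $\xi\neq 0$ and $\zeta\neq 0$, some $\xi^\alpha$ and some $\zeta^\beta$ are nonzero, so the enlarged system is equivalent to the original but every polynomial now has bi-degree $(d,d)$. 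Now substitute $\eta_{ij}:=\xi_i\zeta_j$: each polynomial becomes $\tilde q_j(x,\eta)$, homogeneous of degree $d$ in $\eta$. Adjoining the Segre relations $\eta_{ij}\eta_{k\ell}-\eta_{i\ell}\eta_{kj}\doteq 0$ (homogeneous of degree $2$) forces $\eta$ to be a rank-one tensor, hence of the form $\xi\otimes\zeta$, while $\xi\neq 0 \land \zeta\neq 0$ translates to $\eta\neq 0$ on the Segre variety. The result is a tame formula in the single quantifier $\eta\in P^{rs}$.

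The second and third items then follow almost for free. Conjunction of two tame formulae juxtaposes their quantifiers into disjoint projective tuples, which the first item merges into a single projective quantifier. For disjunction of $\exists\xi\in P^r\setminus 0\ \bigwedge q_j(x,\xi)\doteq 0$ and $\exists\zeta\in P^s\setminus 0\ \bigwedge p_k(x,\zeta)\doteq 0$, I would use a \emph{join} construction: quantify over $(\xi,\zeta)\in P^{r+s}\setminus 0$ subject to all products $\xi_i\zeta_k\doteq 0$ (which forces $\xi=0$ or $\zeta=0$), together with $q_j(x,\xi)\doteq 0$ and $p_k(x,\zeta)\doteq 0$; since each $q_j$ and $p_k$ is homogeneous of positive degree, it vanishes automatically at $0$, and the constructed formula expresses the disjunction as a tame formula. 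For the equationality statement, observe that the solution set of a tame formula is the image, under the projection $K^{|x|}\times\mathbb{P}^{r-1}(E)\to K^{|x|}$, of a Zariski-closed subset; completeness of projective space makes this image Zariski closed in $K^{|x|}$, and Noetherianity of the Zariski topology on $K^{|x|}$ then supplies the descending chain condition.

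The main obstacle is the fourth item. By Delon's quantifier elimination (Fact \ref{F:Delon}), every $\LL_P$-formula is modulo $\acfp$ a Boolean combination of atomic $\LLD$-formulae, so it suffices to write each of $P(t(x))$, $\dep_n(t_1(x),\dotsc,t_n(x))$, and the $\lambda$-equations $\lambda_n^i(t_0(x);t_1(x),\dotsc,t_n(x))\doteq t'(x)$ (for polynomial terms $t, t_j, t'$ in $x$) as a Boolean combination of tame formulae. The predicate $P(t)$ is the tame formula $\exists\zeta\in P^2\setminus 0\ (t\zeta_1-\zeta_0\doteq 0)$ combined with the Boolean side-condition $\zeta_1\neq 0$, which is absorbed using the second item. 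The negation $\neg\dep_n(t_1,\dotsc,t_n)$ is literally tame: $\exists\zeta\in P^n\setminus 0\ \sum\zeta_i t_i\doteq 0$. A $\lambda$-equation translates into the existence of $E$-scalars $\zeta_1,\dotsc,\zeta_n$ witnessing a linear relation $t_0=\sum \zeta_j t_j$ with $\zeta_i=t'$; projectivising this condition and expressing the nonvanishing of a ``denominator'' coordinate through Boolean combinations, then closing under conjunction via the first two items, produces the desired tame expression. The delicate point is book-keeping these Boolean side-conditions while keeping the polynomial equations genuinely homogeneous in the existential tuples.
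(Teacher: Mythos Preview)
Your treatments of the first two items are correct and match the approach in \cite{MPZ20}: the Segre embedding collapses two projective quantifiers into one, and the join construction gives closure under disjunction (after arranging positive homogeneous degree so that the polynomials vanish at the zero tuple).

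The argument for the third item contains a genuine error. You assert that the solution set of a tame formula is Zariski closed in $K^{|x|}$ because it is the image of a closed set under the projection $K^{|x|}\times\mathbb{P}^{r-1}(E)\to K^{|x|}$, and then invoke completeness. But $\mathbb{P}^{r-1}(E)$ is \emph{not} a projective variety over $K$: its points have coordinates in the proper subfield $E$, and the completeness theorem does not apply to this mixed projection. Concretely, the tame formula
\[
\exists\,\zeta\in P^2\;\bigl(\neg\,\zeta\doteq 0\;\land\;x\,\zeta_1-\zeta_0\doteq 0\bigr)
\]
defines exactly the subfield $E$ inside $K$; since $E\subsetneq K$ are both algebraically closed, $E$ is Zariski dense and certainly not Zariski closed in $K$. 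Your argument, taken literally, would show that every $\LL_P$-definable set is Zariski constructible, collapsing $\acfp$ to $\mathrm{ACF}$. The proof in \cite{MPZ20} does use completeness, but in the other direction: one applies it over $K$ to eliminate the projective quantifier when analysing how instances $\phi(x,b)$ behave along an indiscernible sequence of parameters $b$ (equivalently, one uses the indiscernible characterisation of equations in stable theories together with the description of types in $\acfp$). The descending chain condition is not inherited from the Zariski topology on $K^{|x|}$.

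For the fourth item your route through Delon's quantifier elimination is viable, but as written it is incomplete: $\LLD$-terms may contain \emph{nested} $\lambda$-functions, so the atomic formulae to be handled are not only those built from polynomial terms, and the case analysis in the definition of $\lambda_n^i$ has to be unwound by induction on term complexity. (Incidentally, no Boolean side-condition is needed for $P(t)$: in your displayed formula $\zeta_1=0$ already forces $\zeta_0=0$.) The argument in \cite{MPZ20} avoids this bookkeeping by showing directly, via Fact~\ref{F:Delon}, that two tuples satisfying the same tame formulae over a $\lambda$-closed set have the same $\LL_P$-type.
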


The fundamental reason why tame formulae are equations is due to the
following observation, which will be again relevant in order to show
that $\acfp$ is Noetherian:

\begin{remark}
  Projective varieties are \emph{complete}: Given a projective variety
  $Z$ and an algebraic variety $X$, the projection map $X \times Z\to
  X$ is closed with respect to the Zariski topology.
\end{remark}

By Fact \ref{F:tame1}, in order to show that $\acfp$ is Noetherian, we
need only show in Sections \ref{S:noether_trick} and
\ref{S:Hilbertpol} that the family of instances of tame formulae,
which is already closed under finite intersections, has the DCC. For
this, we need a couple of auxiliary lemmata. The next result already
appeared implicitly in \cite[Lemma 7.2]{MPZ20} (or Lemma 9.1 in the
ArXiv version), so we will not provide a proof thereof.

\begin{lemma}\label{L:tame=Zar}
  Let $k$ be a \lc subfield of $K$. For every instance $\phi(x, a)$ of
  a tame formula $\phi$ with parameters in $k$, there exists a Zariski
  closed subset $V$ of $E^{|x|}$ defined over $\lambda(k)$ such that
  for every $e$ in $E$, \[ (K, E)\models \phi(e, a)
  \ \Longleftrightarrow \ e\in V.\] If the polynomials in $\phi$ are
  homogeneous in $X$, then so are the polynomials defining $V$.\qed
\end{lemma}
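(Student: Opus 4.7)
The plan is to unfold the tame formula, use the $\lambda$-closedness of $k$ to replace the single equation over $k$ by an equivalent system over $\lambda(k)$, and then appeal to the completeness of projective space to eliminate the existential quantifier.

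Write $\phi(x,y)$ in the form
$$\exists\,\zeta\in P^r\Bigl(\neg\,\zeta\doteq 0\;\land\;\Land_{j=1}^m q_j(x,y,\zeta)\doteq 0\Bigr)$$
with $q_j\in\Z[X,Y,Z]$ homogeneous in $Z$, and substitute $y=a$. Choose a $\lambda(k)$-basis $(b_\iota)_{\iota\in I}$ of $k$ and expand
$$q_j(X,a,Z)=\sum_{\iota\in I_j} b_\iota\,\tilde q_{j,\iota}(X,Z),$$
where $I_j\subseteq I$ is finite and each $\tilde q_{j,\iota}\in\lambda(k)[X,Z]$ is homogeneous in $Z$ of the same $Z$-degree as $q_j$. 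Since $k$ is $\lambda$-closed, the fields $k$ and $E$ are linearly disjoint over $\lambda(k)$, so the family $(b_\iota)$ remains $E$-linearly independent inside $k\cdot E$. Hence, for every $e\in E^{|x|}$ and $\zeta\in E^r$, the equation $q_j(e,a,\zeta)=0$ is equivalent to the finite system $\tilde q_{j,\iota}(e,\zeta)=0$ for $\iota\in I_j$.

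Thanks to the homogeneity in $Z$, the locus of pairs $(e,\zeta)\in E^{|x|}\times (E^r\setminus\{0\})$ satisfying the whole system descends to a Zariski closed subset $W$ of $E^{|x|}\times\mathbb{P}^{r-1}(E)$ defined over $\lambda(k)$, and $\phi(e,a)$ holds precisely when $e$ lies in the image $V$ of $W$ under the projection $E^{|x|}\times\mathbb{P}^{r-1}(E)\to E^{|x|}$. By completeness of projective space, $V$ is Zariski closed in $E^{|x|}$ and defined over $\lambda(k)$, as required.

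For the homogeneous-in-$X$ addendum, if each $q_j$ (hence each $\tilde q_{j,\iota}$) is homogeneous in $X$ of degree $d_j$, then the equations cutting out $W$ are bihomogeneous in $(X,Z)$. For any nonzero $t\in E$, $(e,[\zeta])\in W$ implies $(te,[\zeta])\in W$, so $V$ is invariant under the scalar action of $E^{\times}$ on $E^{|x|}$; a nonempty Zariski closed subset of affine space invariant under this action is a cone and is therefore defined by polynomials homogeneous in $X$ over $\lambda(k)$. The main obstacle I anticipate is bookkeeping: verifying that the decomposition via $(b_\iota)$ truly preserves the relevant homogeneity, and that the nonzeroness condition $\zeta\neq 0$ is faithfully recorded by passage to $\mathbb{P}^{r-1}(E)$. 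Once that is in place, the two key ingredients---linear disjointness coming from $\lambda$-closedness, and completeness of projective space---close the argument.
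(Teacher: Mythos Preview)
Your argument is correct and follows exactly the line the paper has in mind: the paper does not give a proof of this lemma, referring instead to \cite[Lemma 7.2]{MPZ20}, but the remark preceding the lemma singles out completeness of projective varieties as the key ingredient, and your proof uses precisely that together with the linear disjointness of $k$ and $E$ over $\lambda(k)$ coming from $\lambda$-closedness. The bookkeeping you flag (preservation of $Z$-homogeneity under the basis decomposition, and the passage from $\zeta\neq 0$ to $\mathbb{P}^{r-1}$) is routine and you have handled it; likewise, the cone argument for the homogeneous-in-$X$ addendum is standard, and the homogeneous generators of $V$ can indeed be taken over $\lambda(k)$ since homogeneous components of $\lambda(k)$-polynomials are again over $\lambda(k)$.
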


We will finish this section by showing (\cf Remark \ref{R:minimal_A})
that every \emph{closed} formula (as in Definition \ref{D:noether_F}
with $\FF$ the family of tame formulae) over a subset $A$ of
parameters is indeed equivalent to an instance of a tame formula with
parameters over $A$. This result resonates with \cite[Proposition
  2.9]{Ju00}.

\begin{prop}\label{P:tame_instance}
  An instance of a tame formula which is equivalent to an
  $\LL_P$-formula with parameters in $A$ is equivalent to an instance
  of a tame formula with parameters over $A$.
\end{prop}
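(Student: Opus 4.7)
The plan is to reduce to the case where $A$ is a $\lambda$-closed subfield of $K$, then invoke the Boolean structure of tame formulas combined with a Galois-descent argument in the ACF reduct. Specifically, I would first replace $A$ with $\dcl_P(A)$, which is $\lambda$-closed by Remark \ref{R:lambda_closed}. Since elements of $\dcl_P(A)$ are $\LL_P$-definable over $A$, substituting these definable terms into the polynomial coefficients defining a tame formula reduces the problem to parameters in $A$ itself.

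With $A$ a $\lambda$-closed subfield, Fact \ref{F:tame1}(4) allows me to express the $\LL_P$-formula $\chi(x, b)$ with $b \in A$ (equivalent to $\phi(x, a)$) as a Boolean combination of tame instances over $A$. By closure of the tame family under finite conjunctions and disjunctions (Fact \ref{F:tame1}(2)), this combination can be arranged in the form $\phi(x, a) \equiv \bigvee_{i} \bigl(\alpha_i(x, b) \land \neg \beta_i(x, b)\bigr)$ with each $\alpha_i$ and $\beta_i$ tame over $A$. Setting $\alpha(x, b) = \bigvee_i \alpha_i(x, b)$, which is tame over $A$, one obtains the inclusion $\phi(K, a) \subseteq \alpha(K, b)$.

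The crucial step is to collapse this Boolean combination into a single tame instance with parameters in $A$. The approach is to leverage the projective-geometric interpretation of tame formulas: $\phi(K, a)$ is the $K^{|x|}$-projection, through the $E$-valued points of $\mathbb{P}^{r-1}$, of a projective subvariety $V_a \subset \mathbb{A}_K^{|x|} \times \mathbb{P}_K^{r-1}$ cut out by the homogeneous-in-$Z$ polynomials $q_j(X, a, Z)$. Since $\phi(K, a)$ is $A$-definable and thus $\mathrm{Aut}(K/A)$-invariant, Galois descent in the ACF reduct should supply polynomial coefficients over $A$ for a projective subvariety whose $E$-projection coincides with $\phi(K, a)$, producing the desired tame instance over $A$.

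The main obstacle is making this descent rigorous, since the $E$-restriction is not a Zariski condition in $K$ and standard Galois descent applies only to the underlying projective variety. I would handle this by combining Lemma \ref{L:tame=Zar}, which identifies the $E$-valued part of a tame instance with a Zariski-closed set over $\lambda$ of the parameter field, with Delon's quantifier elimination in $\LLD$ (Fact \ref{F:Delon}). The descent is then arranged in two stages: first for the projective variety and its $E$-valued locus via Lemma \ref{L:tame=Zar}, and then for the reconstruction of the full $K^{|x|}$-projection, which modulo $\LLD$-quantifier elimination is already determined by its behaviour on $\lambda$-closed extensions of $A$.
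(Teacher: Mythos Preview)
Your proposal has two genuine gaps.

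First, the reduction from $\dcl_P(A)$ to $A$ is not as simple as ``substituting definable terms.'' Elements of $\dcl_P(A)$ are built using $\lambda$-functions (and inversion, and $p^{\text{th}}$ roots in positive characteristic), none of which are polynomial operations. Plugging a $\lambda$-value into a coefficient of some $q_j(X,Z)$ does not produce a polynomial over $A$. The paper handles this in its Claim~\ref{Claim:dcl} by an explicit construction: to eliminate a parameter $e_1=\lambda_n^1(a_0;a_1,\dotsc,a_n)$ from a tame instance, one introduces a fresh block of projective variables $\xi$ subject to the linear relation $\xi_0 a_0=\sum_i \xi_i a_i$ and homogenises in $\xi_0$, so that the original $e_1$ is recovered as $\xi_1/\xi_0$. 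This is not a substitution; it is a rewriting that exploits the specific shape of tame formulae and Fact~\ref{F:tame1}.

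Second, and more seriously, your main step---collapsing the Boolean combination over $A$ into a single tame instance via ``Galois descent in the ACF reduct''---is not carried out, and the sketch you give does not close. The invariance you have is $\mathrm{Aut}_{\LL_P}(K,E/A)$-invariance of the \emph{definable set} $\phi(K,a)$; this says nothing about the variety $V_a\subset\mathbb A^{|x|}\times\mathbb P^{r-1}$, whose $E$-projection is $\phi(K,a)$, being defined over $A$ or even over $\acl(A)$. Different varieties can have the same $E$-projection, so ACF-style descent on $V_a$ does not apply. Lemma~\ref{L:tame=Zar} only controls the trace on $E^{|x|}$, not the full $K^{|x|}$-set, so it does not bridge this gap either.

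The paper avoids this difficulty entirely by a different mechanism: when $A\not\subset E$, the set $\acl_P(A)$ is an elementary substructure of $(K,E)$, so by elementarity the $A$-definable formula $\theta(x,a)$ is already equivalent to some $\phi(x,b')$ with $b'\in\acl_P(A)$. One then takes the disjunction over the finitely many $\LL_P$-conjugates of $b'$ and applies Weil's theorem to descend the resulting product ideal to $\dcl_P(A)$ (Claim~\ref{Claim:acl}), after which Claim~\ref{Claim:dcl} finishes. The case $A\subset E$ is reduced to the previous one by adjoining a transcendental element over $E\cdot k$ and then eliminating it using that specialising a transcendental to a variable preserves the vanishing conditions. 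The key idea you are missing is the use of elementarity of $\acl_P(A)$ to locate the parameters of \emph{some} equivalent tame instance inside $\acl_P(A)$ in the first place; without this, there is no variety on which to run any descent.
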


\begin{proof}
  To render the presentation of the proof more structured, we will
  first prove some intermediate claims.

  \begin{claim}\label{Claim:dcl}
    Every instance of a tame formula with parameters in $\dcl_P(A)$ is
    equivalent to an instance of a tame formula over $A$.
  \end{claim}

  \begin{claimproof}
    By Remark \ref{R:lambda_closed}, the definable closure $\dcl_P(A)$
    is the inseparable closure of the smallest \lc subfield containing
    $A$. Thus, the parameters from $\dcl_P(A)$ are obtained from $A$
    using the ring operations, as well as inversion, extraction of
    $p^{th}$-roots (if the characteristic of $K$ is $p>0$) and
    applying the $\lambda$-functions. The cases of the ring
    operations, inversion and extraction of $p^{th}$-roots are easy
    (for the distinguished algebraically closed subfield $E$ is
    perfect), so we will solely focus on the application of the
    $\lambda$-functions. For the sake of the presentation, assume that
    $a_0, \ldots, a_n$ are elements of $A$ with
    $e_1=\lambda_n^1(a_0;a_1,\ldots, a_n)\ne 0$ (so $a_1,\ldots, a_n$
    are linearly independent over $E$). Consider now the instance
    \[ \phi(x, a', e_1)= \exists\, \zeta \in P^r \biggl(\neg\,\zeta\doteq 0\;
    \land\; \Land\limits_{j=1}^{m} q_j(x, a', e_1,\zeta )\doteq 0
    \biggr)\] of a tame formula, where $a'$ is a tuple in $A$
    containing $a_0,\ldots, a_n$. Let $N$ be the largest integer such
    that $e_1^N$ occurs non-trivially in some $q_j$. Set
    now \begin{multline*} \psi(x, a')= \exists\, \zeta \in P^r
      \ \exists\, \xi \in P^{n+1} \biggl(\neg\,\zeta\doteq 0\; \land\;
      \neg\,\xi \doteq 0\; \land \; \\ \xi_0 a_0=\sum\limits_{i=1}^n
      \xi_i a_i\; \land \;\Land\limits_{j=1}^{m} \xi_0^N q_j(x, a',
      \frac{\xi_1}{\xi_0},\zeta )\doteq 0 \biggr),
    \end{multline*}
    which is an instance of a tame formula over $A$ by Fact
    \ref{F:tame1}. Observe that the element $\xi_0$ in $\psi(x, a')$
    is never $0$, for $a_1,\ldots, a_n$ are linearly independent over
    $E$, so $e_1=\frac{\xi_1}{\xi_0}$. Thus, the two instances are
    equivalent, as desired.
  \end{claimproof}
  \begin{claim}\label{Claim:acl}
    An instance of a tame formula with parameters in $\acl_P(A)$ which
    is equivalent to an $\LL_P$-formula with parameters in $A$ is
    equivalent to an instance of a tame formula over $A$.
  \end{claim}

  \begin{claimproof}
    Suppose the element $b$ is algebraic over $A$ and consider the
    instance
    \[\phi(x,b)=\exists\, \zeta \in P^r \biggl(\neg\,\zeta\doteq 0\;
    \land\; \Land\limits_{j=1}^{m} q_j(x,b,\zeta)\doteq 0 \biggr),\] or
    equivalently using a different notation
    \[\phi(x,b) = \exists\, \zeta \in P^r \Bigl(\neg\,\zeta\doteq 0\;
    \land\; (x,\zeta)\in\\V\bigl(I(b)\bigr)\Bigr),\] where $I(b)$ is
    the ideal of $K[X,Z]$, homogeneous in $Z$, generated by
    $q_1(X,b,Z)$,\ldots, $q_n(X,b,Z)$.

    Let now $b=b_1,\ldots, b_n$ be the $\LL_P$-conjugates of $b$ over
    $A$. Since $\phi(x, b)$ is equivalent to an $\LL_P$-formula with
    parameters in $A$, we have that $\phi(x, b)$ is equivalent to the
    disjunction $\bigvee_{i=1}^n \phi(x, b_i)$, which is again an
    instance of a tame formula, namely
    \[\exists\, \zeta \in P^r \bigl(\neg\,\zeta\doteq 0\;
    \land\; (x,\zeta)\in\V(J)\bigr),\] where $J$ is the product ideal
    $I(b_1)\cdots I(b_n)$. The ideal $J$ is invariant under all
    automorphisms of $(K, E)$ fixing $A$ pointwise, so by Weil's
    theorem \cite{aW56} its field of definition is contained in $\dcl_P(A)$.
    Hence, the ideal $J$ can be generated by polynomials over
    $\dcl_P(A)$ which are homogeneous in $Z$. Therefore, the instance
    $\phi(x, b)$ is equivalent to an instance of a tame formula with
    parameters in $\dcl_P(A)$. By Claim \ref{Claim:dcl}, we conclude
    that $\phi(x, b)$ is equivalent to an instance of a tame formula
    with parameters over $A$, as desired.
  \end{claimproof}

  We now have all the ingredients to prove the statement of the
  proposition. Consider thus an instance $\phi(x, b)$ of a tame
  formula and assume that $\phi(x, b)$ is equivalent to an
  $\LL_p$-formula $\theta(x,a)$ with parameters over $A$. Consider
  first the case that $A$ is not fully contained in $E$, so by Fact
  \ref{F:Delon} and Remark \ref{R:lambda_closed}, the subset
  $\acl_P(A)$ is the universe of an elementary substructure $k$ of
  $(K, E)$. Hence, there is some $b'$ in $\acl_P(A)$ such that
  $\phi(x, b')$ is equivalent to $\theta(x, a)$ (and thus to $\phi(x,
  b)$). We deduce from Claim \ref{Claim:acl} that $\phi(x, b)$ is
  equivalent to an instance of a tame formula over $A$, as desired.

  Thus, we need only consider the case that the parameter set $A$ is a
  subset of $E$. Choose a small elementary substructure $k$ of $(K,E)$
  containing $A$. By saturation, there is some element $a'$ in $K$
  which is transcendental over the subfield $E\cdot k$. Set now
  $A'=A\cup \{a'\}$ and deduce from the above paragraph as well as
  from Claim \ref{Claim:acl} that $\phi(x, b)$ is equivalent to an
  instance \[\phi_1(x,a, a')=\exists\, \zeta \in P^r
  \biggl(\neg\,\zeta\doteq 0\; \land\; \Land\limits_{j=1}^{m}
  q_j(x,a,a',\zeta)\doteq 0 \biggr),\] where $a$ is a tuple of
  elements in $A$. Let $c$ in $k$ be a realisation of $\phi_1(x, a,
  a')$ and $e$ some tuple in $E$ . Since $a'$ is transcendental over
  $E\cdot k$, we have that $q_j(c, a, a',e)=0$ if and only if the
  polynomial $q_j(c, a, Y, e)$ is the trivial polynomial (which is
  equivalent to a finite system of polynomial equations). Set now
  \[\phi_2(x,a)=\exists\, \zeta \in P^r \biggl(\neg\,\zeta\doteq 0\;
  \land\; \Land\limits_{j=1}^{m} q_j(x,a,Y',\zeta)\doteq 0 \biggr),\]
  which is again an instance of a tame formula with parameters in $A$.
  It is now clear that \[ \theta(k, a)=\phi(k, b)=\phi_1(k, a,
  a')=\phi_2(k, a).\] Since $\theta(x, a)$ and $\phi_1(x, a)$ have
  parameters in $A\subset k$, we conclude that $\phi_2(x, a)$ is
  equivalent to $\theta(x, a)$, and thus to $\phi(x, b)$, as desired.
\end{proof}

\section{An indirect proof of the Noetherianity of $\acfp$}
\label{S:noether_trick}

In this section we will give a simple proof that the instances of tame
formulae have the DCC in the theory $\acfp$ of proper pairs of
algebraically closed fields. Whilst the methods we will use for the
proof are elementary, using the strength of Corollary \ref{C:min},
they do not explicitly allow to produce the minimal tame formula in a
given type. The subsequent Section \ref{S:Hilbertpol} will provide an
explicit description of the minimal tame formula of a given type using
results on the Hilbert polynomials of saturated ideals.

\begin{prop}\label{P:Trick_proof}
  Given a \lc subfield $k$ and a finite tuple $a$ of $K$, there is
  some $\LL_P$-formula $\theta(x)$ in $\tp_P(a/k)$ which implies every
  instance of a tame formula in $\tp_P(a/k)$.
\end{prop}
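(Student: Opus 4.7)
The plan is to construct an $\LL_P$-formula $\theta(x)\in\tp_P(a/k)$ encoding both the field-theoretic locus of $a$ over $k$ and its $\lambda$-data through a well-chosen finite tuple $e\in E$, and then verify via a projective-completeness argument that $\theta$ implies every tame instance in $\tp_P(a/k)$.

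Since $k$ is $\lambda$-closed, $k$ and $E$ are linearly disjoint over $\lambda(k)$, so any algebraically independent tuple in $E$ over $\lambda(k)$ remains algebraically independent over $k$; hence the transcendence degree of $\lambda(k(a))$ over $\lambda(k)$ is at most $|a|$, and I would choose a finite transcendence basis $e\in E^s$. By Remark \ref{R:lambda_min}, $\lambda(k(e))=\lambda(k)(e)$, so $k(e)$ is itself $\lambda$-closed, and Corollary \ref{C:ind_lambda} then gives $a\ind_{k(e)}E$. Let $V\subseteq K^{|a|+s}$ be the $k$-Zariski locus of $(a,e)$, with vanishing ideal generated by polynomials $f_1,\dotsc,f_r\in k[X,Y]$, and set
\[
\theta(x) \;:=\; \exists\, y\in P^s \; \bigwedge\nolimits_{i=1}^r f_i(x,y)\doteq 0,
\]
an $\LL_P$-formula over $k$ satisfied by $a$ with witness $y=e$.

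To show that $\theta$ implies every tame instance $\phi(x,b)=\exists\,\zeta\in P^r\setminus\{0\},\;\bigwedge_j q_j(x,b,\zeta)\doteq 0$ in $\tp_P(a/k)$ with $b\in k$, I would fix a witness $\xi_0\in E^r\setminus\{0\}$ for $\phi(a,b)$ and form the $k$-Zariski locus $\tilde V$ of $(a,e,\xi_0)$ in $K^{|a|+s+r}$, passing to its projective closure $\tilde V^{\mathrm{proj}}\subseteq K^{|a|+s}\times\mathbb{P}^{r-1}$ obtained by homogenizing in the $Z$-coordinates. Since the $q_j(X,b,Z)$ are homogeneous in $Z$ and vanish at $(a,e,\xi_0)$ with $b\in k$, they vanish identically on $\tilde V^{\mathrm{proj}}$. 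By the completeness of projective varieties (the remark before Lemma \ref{L:tame=Zar}), the projection $\tilde V^{\mathrm{proj}}\to K^{|a|+s}$ is Zariski-closed, and its image contains the generic point $(a,e)$ of $V$, hence equals $V$. So every $(a',e')\in V$ with $e'\in E^s$ admits a $K$-rational point $[\xi']\in\mathbb{P}^{r-1}(K)$ lying above it in $\tilde V^{\mathrm{proj}}$.

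The main obstacle will be to descend this $K$-rational point to an $E$-rational one $[\xi'']\in\mathbb{P}^{r-1}(E)\setminus\{0\}$, which together with the identity $q_j(X,b,Z)\equiv 0$ on $\tilde V^{\mathrm{proj}}$ produces the required witness $q_j(a',b,\xi'')=0$ for $\phi(a',b)$. The independence $a\ind_{k(e)}E$ together with the linear disjointness of $k(e)$ and $E$ over $\lambda(k(e))$ is precisely tailored for this descent: witnesses of tame instances for $(a,e)$ over $k$ factor through $\acl(\lambda(k(e)))$ inside $E$, and this factorization survives the specialization $(a,e)\to(a',e')$ whenever $e'\in E^s$, producing the required $E$-rational witness and completing the proof.
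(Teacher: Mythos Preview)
Your construction of $\theta(x)$ and the projective-completeness argument are sound up to the point where you obtain, for every $(a',e')\in V$ with $e'\in E^s$, a point $[\xi']\in\mathbb{P}^{r-1}(K)$ lying over $(a',e')$ in $\tilde V^{\mathrm{proj}}$ with $q_j(a',b,\xi')=0$. The genuine gap is exactly the step you yourself flag as ``the main obstacle'': descending $[\xi']$ from $\mathbb{P}^{r-1}(K)$ to $\mathbb{P}^{r-1}(E)$. Your final paragraph does not prove this. The fibre of $\tilde V^{\mathrm{proj}}$ over $(a',e')$ is a projective variety defined over $k(a',e')$, which in general is not contained in $E$, so there is no automatic reason for it to have an $E$-rational point. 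The two assertions you offer---that witnesses ``factor through $\acl(\lambda(k(e)))$'' and that this ``survives specialization''---are neither made precise nor justified; the independence $a\ind_{k(e)}E$ tells you that $a$ and $\xi_0$ are algebraically independent over $k(e)$, but it does not by itself let you choose $\xi_0$ inside $\acl(\lambda(k)(e))$, nor does it explain why an $E$-rational witness persists under an arbitrary specialization $(a,e)\rightsquigarrow(a',e')$ along $V$.

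The paper's proof avoids this descent problem entirely by a different mechanism. Instead of producing a $K$-point via completeness and then trying to push it into $E$, it fixes a Gr\"obner basis $r_1(X,e),\dotsc,r_l(X,e)$ of $\I(a/E\cdot k)$ and records in $\theta$ the extra open condition $\sigma(\zeta)\neq 0$ that the leading coefficients do not vanish. For any tame $\phi$ in the type with witness $e'$, Gr\"obner division gives an explicit identity $\sigma(e)^N q_j(X,e')=\sum_i h_{j,i}(X,e,e')\,r_i(X,e)$. The existence of such an $e'$ is itself a tame condition $\rho(\zeta)$ on $\zeta$, and Lemma~\ref{L:tame=Zar} (tame formulae restricted to $E$ are Zariski-closed over $\lambda(k)$) forces every $f\in E$ realizing the locus $\gamma$ of $e$ to satisfy $\rho$. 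This produces the required $E$-rational witness $f'$ directly, with $q_j(b,f')=0$ following from the division identity and $r_i(b,f)=0$. In short, the paper replaces your geometric descent by an algebraic identity whose transfer to $E$ is handled by Lemma~\ref{L:tame=Zar}; your proposal gives no substitute for that step.
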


\begin{proof}
  With respect to a fixed compatible total order of the collection of
  monomials on $X$, choose a Gr\"obner basis $r_1,\dotsc, r_m$ of the
  vanishing ideal $\I(a/ E\cdot k)$, that is, a collection of
  polynomials $r_1,\dotsc, r_m$ in the ideal $\I(a/ E\cdot k)$ such
  that the leading (or \emph{initial}) monomial (with respect to the
  fixed total order) of every polynomial in the ideal is divisible by
  the leading monomial of one of the $r_j$'s. Such a set is always a
  generating set for the ideal thanks to the corresponding division
  algorithm of Gr\"obner \cite[Proposition 5.4.2 \& Corollary
    5.4.5]{nL08}.

  Clearing denominators, we may assume that each $r_i$ has
  coefficients in the ring $k[E]$ generated by $k\cup E$. Since every
  element of the ring $k[E]$ is a sum of products of the form $b\cdot
  e'$, where $b$ is in $k$ and $e'$ is an element of $E$, we may write
  (after possibly adding additional variables) each $r_i$ as
  $r_i=r_i(X,e)$, where $r_i(X,Z)$ is a polynomial in $k[X,Z]$ and $e$
  is a tuple from $E$. Let $\sigma_i(e)$ be the leading coefficient of
  $r_i(x, e)$ with respect to our compatible total order and denote by
  $\sigma(e)$ the product of all the $\sigma_i(e)$'s. Choosing a
  system of generators of the vanishing ideal $I(e/k)$, denote by
  $\gamma(Z)$ the locus of $e$ over $k\cap E=\lambda(k)$.

  We will show that the $\LL_P$-formula \[\theta(x) = \exists\,
  \zeta\in P\;\bigl( \gamma(\zeta)\land\neg\,\sigma(\zeta)\doteq
  0\land \Land_{i=1}^l\,r_i(x,\zeta)\doteq 0\bigr)\] has the desired
  property as in the statement. Notice first of all that the above
  formula belongs to $\tp_P(a/k)$, setting $\zeta=e$.

  Consider now an instance of a tame formula
  \[\phi(x)=\exists\;\zeta'\in P\;\bigl(\neg\,\zeta'\doteq
  0\;\land\;\Land_{j=1}^m\,q_j(x,\zeta')\doteq 0\bigr)\] in
  $\tp_P(a/k)$, where the polynomials $q_j$ in $k[X,Z']$ are
  homogeneous in the tuple of variables $Z'$. Since $a$ realises
  $\phi$, there exists a non-trivial tuple $e'$ in $E$ with $q_j(a,
  e')=0$ for every $1\le j\le m$. Now, each polynomial $q_j(X,e')$
  belongs to $\I(a/ E\cdot k)$, so for large enough $N$ and each $j$
  Gr\"obner's division algorithm allows us to write
  \[\sigma(e)^Nq_j(X,e')=\sum_{i=1}^l h_{j,i}(X,e,e')r_i(X,e)\]
  for some polynomials $h_{j,i}(X,Z,Z')$ over $k$, homogeneous in
  the tuple of variables $Z'$ of the same degree as $q_j(X,Z')$.

  The formula
  \[\rho(\zeta)=\exists\;\zeta'\in P\;\Bigl(\neg\,\zeta'\doteq
  0\;\land\;\Land_{j=1}^m\bigl(\sigma(\zeta)^Nq_j(X,\zeta')\; -\!
  \sum_{i=1}^l h_{l,i}(X,\zeta,\zeta')r_i(X,\zeta) \doteq
  0\bigr)\Bigr)\] is an instance of a tame formula in the type
  $\tp_P(e/k)$. Since $k$ is \lc, Lemma \ref{L:tame=Zar} yields that
  $\rho(E)$ is equivalent to the $E$-rational points of a Zariski
  closed set defined over $\lambda(k)=k\cap E$. In particular, every
  solution of the locus $\gamma(\zeta)$ of $e$ over $k$ must satisfy
  $\rho(\zeta)$.

  Choose now a realisation $b$ of the above formula $\theta(x)$ and
  let $f$ be the corresponding tuple from $E$. Since $f$ is a solution
  of $\gamma(\zeta)$, it realises $\rho(\zeta)$, so there exists a
  non-trivial tuple $f'$ in $E$ such that
  $\sigma(f)^Nq_j(X,f')=\sum_{i=1}^l h_{j,i}(X,f,f')r_i(X,f)$ for
  every $1\le j\le m$. Since $r_i(b,f)=0$ for all $i$ yet
  $\sigma(f)\ne 0$, it follows that $q_j(b,f')=0$ for all $1\le j\le
  m$. We conclude that every realisation $b$ of $\theta$ realises
  $\phi$, as desired.
\end{proof}

Corollary \ref{C:min}, Fact \ref{F:tame1} and Proposition
\ref{P:Trick_proof} immediately yield the following.

\begin{cor}\label{C:acfp_noether}
  The theory $\acfp$ of proper pairs of algebraically closed fields is
  Noetherian with respect to the collection of tame formulae.\qed
\end{cor}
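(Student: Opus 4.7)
The plan is to derive the corollary by combining Proposition \ref{P:Trick_proof}, Corollary \ref{C:min}, and Fact \ref{F:tame1}; all of the serious work has already been done in Proposition \ref{P:Trick_proof}, so what remains is to verify the ingredients of Definition \ref{D:noether_F} for the family $\FF$ of tame formulae.

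First, Fact \ref{F:tame1} immediately supplies two of these ingredients: the family of tame formulae is closed under finite conjunctions, and every $\LL_P$-formula is equivalent modulo $\acfp$ to a Boolean combination of tame formulae, which is exactly the definability clause of Noetherianity. The tautology $\top$ can be regarded as a tame formula with empty inner conjunction, since every model of $\acfp$ contains non-zero elements of its distinguished subfield.

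Second, to verify the DCC for instances of tame formulae in an arbitrary model $M$ of $\acfp$, I would invoke condition~(c) of Corollary \ref{C:min}: it suffices to exhibit, for every type $p(x)$ over $M$, some $\LL_M$-formula $\theta(x) \in p$ such that $\psi(x,b)$ belongs to $p$ if and only if $\theta(M) \subset \psi(M,b)$ for every tame formula $\psi(x,y)$ and every tuple $b$ in $M$. Since $M$ is closed under the $\lambda$-functions, it is \lc as a subfield of itself, so Proposition \ref{P:Trick_proof} applies with $k = M$ and $a$ any realisation of $p$ in an elementary extension, producing an $\LL_P$-formula $\theta(x) \in p$ which implies every tame instance that lies in $p$. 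The forward direction of~(c) is then precisely the conclusion of Proposition \ref{P:Trick_proof}, and the backward direction is automatic: if $\theta(M) \subset \psi(M,b)$, then any realisation of $\theta$ realises $\psi$ as well, so $\psi(x,b)$ lies in $p$.

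There is no genuine obstacle remaining; the only point requiring care is that Corollary \ref{C:min}(c) does not demand the witnessing formula $\theta$ to itself belong to $\FF$, which matches exactly what Proposition \ref{P:Trick_proof} delivers. Combining these three steps yields that $\FF$ is Noetherian and, together with the Boolean-combination clause of Fact \ref{F:tame1}, that $\acfp$ is Noetherian with respect to the collection of tame formulae.
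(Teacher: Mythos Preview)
Your proposal is correct and follows essentially the same approach as the paper, which simply states that Corollary~\ref{C:min}, Fact~\ref{F:tame1} and Proposition~\ref{P:Trick_proof} immediately yield the result. You have merely spelled out the verification that the paper leaves implicit, including the observation that a model is \lc so that Proposition~\ref{P:Trick_proof} applies and condition~(c) of Corollary~\ref{C:min} is satisfied.
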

By Remark \ref{R:minimal_A} and Proposition \ref{P:tame_instance}, we
deduce the following result.
\begin{cor}
  Every type over a subset $A$ of $K$ contains an instance $\phi(x,a)$
  of a tame formula which implies every formula in $p$ which is
  equivalent to an instance of a tame formula. \qed
\end{cor}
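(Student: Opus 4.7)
The plan is to combine the two cited results in a direct way. First, I would invoke Corollary \ref{C:acfp_noether} to see that $\acfp$ is Noetherian with respect to the family $\FF$ of tame formulae, so Remark \ref{R:minimal_A} applies to $\FF$ and $A$: the type $p$ contains a closed formula $\psi(x,c)$ --- necessarily with parameters $c$ in $A$, since $\psi$ literally belongs to a type over $A$ --- which is minimal among all closed formulae in $p$.

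Next, unpacking Definition \ref{D:noether_F}, the set cut out by $\psi(x,c)$ equals $\phi_0(K,b_0)$ for some tame formula $\phi_0$ and some tuple $b_0$ in $K$, but a priori $b_0$ need not lie in $A$. This is exactly the situation handled by Proposition \ref{P:tame_instance}: the instance $\phi_0(x,b_0)$ is equivalent to the $\LL_P$-formula $\psi(x,c)$ whose parameters lie in $A$, so the proposition produces a tame formula $\phi$ and a tuple $a$ from $A$ such that $\phi(x,a)$ is equivalent to $\psi(x,c)$, and hence belongs to $p$.

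To conclude, given any formula $\theta(x,a')\in p$ which is equivalent to an instance of a tame formula, Definition \ref{D:noether_F} tells us that $\theta(x,a')$ is itself a closed formula in $p$, so the minimality of $\psi(x,c)$ yields $\psi(x,c)\vdash\theta(x,a')$, and therefore $\phi(x,a)\vdash\theta(x,a')$, as required. The argument is essentially bookkeeping: there is no substantial obstacle, since both ingredients --- the existence of a minimal closed formula provided by Noetherianity, and the descent of parameters from $K$ to $A$ furnished by Proposition \ref{P:tame_instance} --- are already in hand.
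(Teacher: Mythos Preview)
Your proposal is correct and matches the paper's intended argument: the paper simply cites Remark~\ref{R:minimal_A} and Proposition~\ref{P:tame_instance} and places a \qedsymbol, and you have faithfully unpacked precisely those two ingredients in the right order.
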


\section{Morley, Lascar and Poizat}\label{S:RM}

In \cite[Subsection 2.2, p.\ 1660]{bP01}, Poizat stated (without
proof) that the following rank equality holds for every type
$p=\tp(a/k)$ over an elementary substructure $k$ of a sufficiently
saturated proper pair $(K, E)$ of algebraically closed fields: \[
\RU(p)=\RM(p)=\omega \cdot \tr(a/E\cdot k) + \tr(\alpha/E\cap k),\]
where $\alpha$ is the canonical base in the reduct ACF of $k(a)$ over
$E$. (Note that there is a misprint in \cite{bP01}). He deduced from
the above identity that the dimension associated to the unique generic
type of $K$ is additive. Though Poizat's formula (and its proof) is
probably well-known, we will nonetheless take the opportunity to give
a detailed proof in this section. Our proof yields in particular that
the Morley rank of a type over a \lc subfield can be isolated by a
tame formula, and thus the theory $\acfp$ admits Noetherian isolation,
by Corollary \ref{C:Noether_iso_Model}. In order to prove Poizat's
formula, we will need some auxiliary results regarding the behaviour
of non-forking independence in $\acfp$.

The theory $\acfp$ of proper pairs of algebraically closed fields is a
particular case of a more general construction due to Poizat
\cite{bP83}, who showed that the common theory of \emph{belles paires}
of models of a stable theory $T$ is again stable whenever $T$
\emph{does not have the finite cover property ($\mathrm{nfcp}$)}. For
a stable theory, nfcp is equivalent \cite[Chapter II, Theorem
  4.4]{Shelahbook} to the elimination of $\exists^\infty$ in
$T^{\mathrm eq}$. The theory of algebraically closed fields eliminates
both imaginaries and the quantifier $\exists^\infty$, so it has nfcp.
However, there are Noetherian theories with fcp, as the following
example shows.

\begin{remark}
  In the language $\LL$ consisting of a single binary relation
  $E(x,y)$ for an equivalence relation, consider the theory of the
  $\LL$-structures which have exactly one equivalence class of size
  $n$ for every $1\le n$ in $\N$. This theory is $\omega$-stable of
  Morley rank $2$ and admits quantifier elimination after adding
  countably many constant symbols as canonical representatives of the
  finite equivalence classes. In particular, this theory is
  Noetherian, but does not eliminate $\exists^\infty$, witnessed by
  the formula $E(x,y)$.
\end{remark}

In \cite{BPV03}, Ben Yaacov, Pillay and Vassiliev generalised Poizat's
\emph{belles paires} of stable structures to pairs of models of a
simple theory. Akin to the result of Poizat, when the corresponding
theory of pairs is first-order, then it is again simple. Moreover,
non-forking independence in the theory of the pair (which we will
denote by $\ind^P$) can be characterised in terms of the
independence(s) in the $\LL$-reduct of the sets as well as of the
canonical bases over the predicate, which in our setting corresponds
to taking $\lambda$-closures, up to interalgebraicity.

All throughout this section, work inside a sufficiently saturated
proper pair $(K, E)$ of the theory $\acfp$ of proper pairs of
algebraically closed fields. All subsets and tuples considered are
small with respect to the saturation of $(K,E)$.
\begin{fact}\label{F:forking}\textup{(}\cite[Remark 7.2 \&
    Proposition 7.3]{BPV03}\textup{)}~ Let $a$ be a finite tuple and
  $k\subset L$ be subfields of $K$. We have the following description
  of non-forking:

  \[ \begin{array}{rcl}a\ind^P\limits_{k} L & \mbox{ if and only if }
    & \left\{\begin{array}{c} k(a)\ind\limits_{ E\cdot k} E\cdot
    L\\[5mm] \mbox{and}\\[3mm] \lambda(k(a)) \ind\limits_{\lambda(k)}
    \lambda(L)
    \end{array}\right.
  \end{array}\]
\end{fact}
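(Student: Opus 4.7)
My plan is to apply the general theorem on non-forking in lovely pairs from \cite{BPV03}, specialized to the strongly minimal theory ACF (which has nfcp), and translate the resulting criterion into the concrete algebraic description involving field-theoretic independence and $\lambda$-closures. By symmetry and transitivity of non-forking, together with Fact \ref{F:Delon} and Remark \ref{R:lambda_closed}, I would first reduce to the case where $k$ is the universe of an elementary substructure of $(K,E)$, i.e., a field-algebraically closed $\lambda$-closed subfield; the general case then follows by passing to algebraic closures and using the invariance of forking under algebraic closure.

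For the forward implication, I would derive each of the two conditions separately from $a \ind^P_k L$. Condition (i) reflects the fact that, after adjoining the full predicate $E$ as parameters, non-forking in $\acfp$ collapses to ACF non-forking: the unique generic type in $\acfp$ over a $\lambda$-closed field is witnessed by transcendence over the compositum with $E$, which forces $k(a)$ to be ACF-independent from $E\cdot L$ over $E\cdot k$. Condition (ii) follows by applying the independence $a \ind^P_k L$ to the $\LL_P$-definable tuple $\lambda(k(a))$, which lies in $E$; since the induced structure on $E$ is that of a pure algebraically closed field, the resulting $\LL_P$-non-forking specializes to algebraic independence of $\lambda(k(a))$ and $\lambda(L)$ over $\lambda(k)$.

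For the backward implication, I would show that conditions (i) and (ii) together pin down the unique non-forking extension of $\tp_P(a/k)$ to $L$. By Delon's quantifier elimination in $\LLD$ (Fact \ref{F:Delon}), the $\LL_P$-type of $a$ over $L$ is determined by the quantifier-free $\LLD$-type of the $\lambda$-closure of $k(a,L)$; Lemma \ref{L:indep_lambda} and Corollary \ref{C:ind_lambda} then allow one to identify this type as the heir of $\tp_P(a/k)$, which in the stable theory $\acfp$ is equivalent to non-forking. The main obstacle will be precisely this backward direction: one must verify that no further predicate-level constraint is hidden beyond (i) and (ii), which ultimately rests on the description of $\lambda$-closures from Remark \ref{R:lambda_min} and the fact that all $\LL_P$-definable invariants of $a$ over $L$ factor through either field-theoretic data over $E\cdot k$ or $\lambda$-invariants living in $E$.
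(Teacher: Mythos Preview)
The paper does not prove this statement at all: it is stated as a \emph{Fact} and attributed directly to \cite[Remark 7.2 \& Proposition 7.3]{BPV03}, with no argument given. So there is no paper proof to compare your proposal against; the authors simply import the characterisation of non-forking in lovely pairs from Ben-Yaacov, Pillay and Vassiliev and specialise it to $\acfp$.

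Your outline is a plausible route to recovering this specialisation, and the overall shape (reduce to $k$ an elementary substructure, then use that the induced structure on $E$ is pure ACF for condition~(ii), and that adjoining $E$ collapses $\acfp$-forking to ACF-forking for condition~(i)) is the standard one. Two cautions, though. First, your reduction to $k$ being an elementary substructure needs care: replacing $k$ by $\acl_P(k)$ changes $\lambda(k)$ only up to algebraic closure, but you must check that both sides of the equivalence are insensitive to this, which is fine for the right-hand side but for the left-hand side relies on forking being preserved under passing to $\acl_P$. Second, in the backward direction you invoke Lemma~\ref{L:indep_lambda} and Corollary~\ref{C:ind_lambda}, but in the paper these come \emph{after} Fact~\ref{F:forking} and Lemma~\ref{L:lambda_ind} even uses Fact~\ref{F:forking} in its statement; if you want a self-contained argument you should instead work directly with Remark~\ref{R:lambda_min} and Delon's description of types (Fact~\ref{F:Delon}) to show that conditions (i) and (ii) force $\tp_P(a/L)$ to be the heir of $\tp_P(a/k)$.
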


From the above description of non-forking, we easily deduce the
following consequence.

\begin{lemma}\label{L:lambda_ind}
  Assume $k\subset L$ are subfields of $K$. Given a tuple $a$ in $K$,
  whenever \[k(a)\ind_{ E\cdot k} E\cdot L, \] we have that
  $\lambda(L(a))$ is interalgebraic with $\lambda(k(a))$ over
  $\lambda(L)$. In particular, if $a\ind^P_k L$, then $\lambda(L(a))$
  and $\lambda(k(a))$ are interalgebraic over $\lambda(L)$.
\end{lemma}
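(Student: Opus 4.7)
The plan is to prove the non-trivial direction $\lambda(L(a))\subset\acl\bigl(\lambda(L)\cdot\lambda(k(a))\bigr)$; the reverse inclusion $\lambda(k(a))\subset\lambda(L(a))$ is immediate from $k(a)\subset L(a)$.

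First, I would apply Lemma \ref{L:indep_lambda} to the pair of subfields $L\cdot\lambda(k(a))\subset L(a)$. A preliminary computation, based on the linear disjointness of $L$ from $E$ over $\lambda(L)$ together with the inclusion $\lambda(k(a))\subset E$, yields that $L\cdot\lambda(k(a))$ is linearly disjoint from $E$ over $\lambda(L)\cdot\lambda(k(a))$; by the minimality in the definition of the $\lambda$-hull (Remark \ref{R:lambda_min}) this forces $\lambda\bigl(L\cdot\lambda(k(a))\bigr)=\lambda(L)\cdot\lambda(k(a))$. Hence the base in Lemma \ref{L:indep_lambda} collapses to $L\cdot\lambda(k(a))$, and the desired inclusion becomes equivalent to the ACF-independence $a\ind_{L\cdot\lambda(k(a))}E$.

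To establish this independence, I would invoke the tower property of linear disjointness applied to the tower $\lambda(k(a))\subset k\cdot\lambda(k(a))\subset k(a)\cdot\lambda(k(a))$, using that $k(a)\cdot\lambda(k(a))$ is linearly disjoint from $E$ over $\lambda(k(a))$ by the very definition of $\lambda(k(a))$. The tower yields that $k(a)\cdot\lambda(k(a))$ is linearly disjoint from $E\cdot k$ over $k\cdot\lambda(k(a))$, whence $\tr\bigl(a/k\cdot\lambda(k(a))\bigr)=\tr(a/E\cdot k)$. The hypothesis gives $\tr(a/E\cdot k)=\tr(a/E\cdot L)$, so $\tr\bigl(a/k\cdot\lambda(k(a))\bigr)=\tr(a/L\cdot E)$. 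The chain of containments $k\cdot\lambda(k(a))\subset L\cdot\lambda(k(a))\subset L\cdot E$ then forces $\tr\bigl(a/L\cdot\lambda(k(a))\bigr)=\tr(a/L\cdot E)$, which is exactly the required independence.

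The \emph{in particular} clause is immediate from Fact \ref{F:forking}, since $a\ind^P_k L$ entails, among other things, the first clause $k(a)\ind_{E\cdot k}E\cdot L$ of the hypothesis of the lemma. The main technical task is thus the careful (twofold) application of the tower property of linear disjointness; I expect no serious obstacle, only bookkeeping of the containments and linear disjointness relations.
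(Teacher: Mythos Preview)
Your argument is correct and is essentially the paper's proof, reorganised: where you invoke Lemma~\ref{L:indep_lambda} and then count transcendence degrees along the chain $k\cdot\lambda(k(a))\subset L\cdot\lambda(k(a))\subset L\cdot E$, the paper works directly with Remark~\ref{R:lambda_min} and two applications of transitivity of~$\ind$. One small slip worth flagging: since $\lambda(L)$ need not be contained in $L$, the base in your application of Lemma~\ref{L:indep_lambda} is $\lambda(L)\cdot L\cdot\lambda(k(a))$ rather than $L\cdot\lambda(k(a))$ (and likewise ``linear disjointness of $L$ from $E$ over $\lambda(L)$'' should read $\lambda(L)\cdot L$ from $E$; the identity $\lambda(L\cdot\lambda(k(a)))=\lambda(L)\cdot\lambda(k(a))$ follows more directly from the formula $\lambda(k(e))=\lambda(k)(e)$ in Remark~\ref{R:lambda_min}); however, the independence $a\ind_{L\cdot\lambda(k(a))}E$ you establish over the smaller base a fortiori gives the required independence over the larger one, so the proof goes through unchanged.
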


\begin{proof}
  Note first that $\lambda(k(a))$ is contained in $\lambda(L(a))$. By
  Remark \ref{R:lambda_min}, we need only show that
  \[  \big(\lambda(L)\cdot \lambda(k(a))\big)^\mathrm{alg}
  \cdot L(a) \ld_{\big(\lambda(L)\cdot
    \lambda(k(a))\big)^\mathrm{alg}} E, \] or equivalently, \[
  \lambda(k(a)) \cdot L(a) \ind_{\lambda(L)\cdot \lambda(k(a))} E.\]
  Now, the fields $k(a)$ and $E$ are linearly disjoint over
  $\lambda(k(a))$, so \[ \lambda(k(a))\cdot k(a)
  \ind_{\lambda(k(a))\cdot k} E\cdot k.\] Together with the assumption
  \[  k(a)\ind_{E\cdot k} E\cdot L,\] we deduce by transitivity of
  non-forking independence that \[ \lambda(k(a))\cdot k(a)
  \ind_{\lambda(k(a))\cdot k\cdot } E\cdot L,\] and thus \[
  \label{E:lambda_ind}
  \lambda(k(a))\cdot L(a) \ind_{\lambda(k(a))\cdot L} E\cdot
  L.\tag{$\star$}\] Remark \ref{R:lambda_min} yields that the subfield
  $\lambda(L)\cdot L$ is linearly disjoint from $E$ over $\lambda(L)$,
  and therefore by monotonicity \[ \lambda(k(a))\cdot L
  \ind_{\lambda(L)\cdot \lambda(k(a))} E.\] Together with
  $\eqref{E:lambda_ind}$, we conclude by transitivity
  that \[\lambda(k(a))\cdot L(a) \ind_{\lambda(L)\cdot \lambda(k(a))}
  E,\] as desired.
\end{proof}

\begin{notation}
  Given a tuple $a$ and a subfield $k$ of $K$, set \[ \smallrm(a/k)=
  \omega \cdot \tr(a/E\cdot k) + \tr(\lambda(k(a))/\lambda(k)).\]
\end{notation}

Poizat's formula now translates as $ \RU(a/k)=\RM(a/k)=\smallrm(a/k)$.
In order to show that these three ranks agree, we will first show that
the rank $\smallrm$ controls non-forking.

\begin{lemma}\label{L:nonfork_smallrm}
  The ordinal-valued rank $\smallrm$ witnesses non-forking: Given
  subfields $k\subset L$ and a tuple $a$ of $K$, we have that
  \[ a\ind^P_k L \text{ if and only if }
  \smallrm(a/k)=\smallrm(a/L).\]
\end{lemma}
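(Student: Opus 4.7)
The plan is to expand $\smallrm(a/k)=\omega\cdot\alpha_k+\beta_k$ with $\alpha_k=\tr(a/E\cdot k)$ and $\beta_k=\tr(\lambda(k(a))/\lambda(k))$ (analogously $\alpha_L,\beta_L$). Since $a$ is a finite tuple, all four scalars lie below $\omega$, so uniqueness of the Cantor normal form below $\omega^2$ turns the ordinal equality $\smallrm(a/k)=\smallrm(a/L)$ into the conjunction $\alpha_k=\alpha_L$ and $\beta_k=\beta_L$. The strategy is then to identify each of these two scalar equalities with one of the two ACF-independence conditions describing $a\ind^P_k L$ in Fact \ref{F:forking}.

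The $\alpha$-equality is exactly the standard description of non-forking in $\mathrm{ACF}$: since $E\cdot k\subseteq E\cdot L$, we have $\alpha_k=\alpha_L$ if and only if $k(a)\ind_{E\cdot k} E\cdot L$, which is the first condition of Fact \ref{F:forking}.

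For the $\beta$-equality the key is Lemma \ref{L:lambda_ind}. Once the first condition above is known, Lemma \ref{L:lambda_ind} gives interalgebraicity of $\lambda(L(a))$ with $\lambda(k(a))$ over $\lambda(L)$, whence
\[\beta_L=\tr(\lambda(L(a))/\lambda(L))=\tr(\lambda(k(a))/\lambda(L)).\]
Since $\lambda(k)\subseteq\lambda(L)$, the inequality $\tr(\lambda(k(a))/\lambda(L))\le\tr(\lambda(k(a))/\lambda(k))=\beta_k$ is automatic and tight precisely when $\lambda(k(a))\ind_{\lambda(k)}\lambda(L)$ in $\mathrm{ACF}$ — the second condition in Fact \ref{F:forking}. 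Putting the two steps together, $\smallrm(a/k)=\smallrm(a/L)$ if and only if both conditions of Fact \ref{F:forking} hold, i.e.\ if and only if $a\ind^P_k L$.

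I do not anticipate a serious obstacle: everything is bookkeeping once Lemma \ref{L:lambda_ind} is in hand. The only mildly delicate point is the conversion of interalgebraicity into the transcendence-degree identity above, which is just additivity of transcendence degree; and the splitting of the ordinal equality into its two scalar components, which costs nothing more than the observation that $a$ is a finite tuple.
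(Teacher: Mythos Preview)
Your proof is correct and follows essentially the same route as the paper's: both rest on Fact~\ref{F:forking} for the two-condition description of $\ind^P$, the transcendence-degree characterisation of ACF-independence for the $\alpha$-part, and Lemma~\ref{L:lambda_ind} to convert $\beta_L$ into $\tr(\lambda(k(a))/\lambda(L))$ for the $\beta$-part. The only cosmetic difference is that you package the argument as a single equivalence chain via Cantor normal form, whereas the paper argues the two implications separately (and first passes to \lc $k,L$, a harmless reduction you do not need).
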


\begin{proof}
  Adding to a set the values of its $\lambda$-functions does not
  affect non-forking independence in $\acfp$, since the
  $\lambda$-functions are $\LL_P$-definable. Moreover, none of the
  transcendence degrees occurring in $\smallrm(a/k)$ change when
  passing from $k$ to the intermediate \lc field extension $k\subset
  \lambda(k)\cdot k \subset E\cdot k$, so we may assume that $k$, and
  analogously $L$, is \lc.

  We prove first that the rank $\smallrm$ remains constant when
  passing to a non-forking extension. By the description of
  non-forking in Fact \ref{F:forking}, we have that
  \[ \label{E:Indep} k(a)\ind_{E\cdot k} E\cdot L\ \text{ and } \
  \lambda(k(a))\ind_{\lambda(k)} \lambda(L) \tag{$\natural$}, \] so
  $\tr(a/E\cdot k)\stackrel{\eqref{E:Indep}}{=}\tr(a/E\cdot L)$. Now,
  Lemma \ref{L:lambda_ind} yields that $\lambda(L(a))$ is
  interalgebraic with $\lambda(k(a))$ over $\lambda(L)$, so \[
  \tr(\lambda(k(a))/\lambda(k))\stackrel{\eqref{E:Indep}}{=}
  \tr(\lambda(k(a))/\lambda(L)) = \tr(\lambda(L(a))/\lambda(L)).\]
  Therefore,
  \begin{multline*}
    \smallrm(a/k)=\omega \cdot \tr(a/E\cdot k) +
    \tr(\lambda(k(a))/\lambda(k)) = \\ = \omega \cdot \tr(a/E\cdot L)
    + \tr(\lambda(L(a))/\lambda(L))=\smallrm(a/L), \end{multline*} as
  desired.

  Let us now prove the converse: If $a\nind^P_k L$, then
  $\smallrm(a/L)<\smallrm(a/k)$. Again by Fact \ref{F:forking}, one of
  the two independences in $\eqref{E:Indep}$ cannot hold. If
  $k(a)\nind_{E\cdot k} E\cdot L$, the leading coefficient of $\omega$
  in $\smallrm(a/L)$ is strictly smaller than the coefficient of
  $\omega$ in $\smallrm(a/k)$, so we are done. We may thus assume that
  $k(a)\ind_{E\cdot k} E\cdot L$ and hence $\lambda(L(a))$ is
  interalgebraic with $\lambda(k(a))$ over $\lambda(L)$ by Lemma
  \ref{L:lambda_ind}. However \[ \lambda(k(a))\nind_{\lambda(k)}
  \lambda(L),\] so $\tr(\lambda(L(a))/\lambda(L)) =
  \tr(\lambda(k(a))/\lambda(L)) < \tr(\lambda(k(a))/\lambda(k))$. We
  conclude that $\smallrm(a/L)<\smallrm(a/k)$, as desired.
\end{proof}

In this section, we will show Poizat's formula in two steps: First, we
show that the rank $\smallrm$ is \emph{connected} (\cf Lemma
\ref{L:U_smallrm}), so it must be bounded from above by Lascar rank,
as both ranks witness non-forking. We will then show that every type
over a \lc subfield can be isolated by a tame formula among types of
larger $\smallrm$-rank, which will then give that the rank $\smallrm$
is bounded from below by Morley rank. Now, the inequality $\RU(p)\le
\RM(p)$ always holds for all types, so putting all together we obtain
the equality of all three ranks.

\begin{lemma}\label{L:U_smallrm}
  Consider a subfield $k$ of $K$ and a finite tuple $a$. Assume that
  $\alpha< \smallrm(a/k)$ for some ordinal number $\alpha$. Then there
  is some field extension $k\subset L$ with $\alpha\le
  \smallrm(a/L)<\smallrm(a/k)$. It follows that $\smallrm(p) \le
  \RU(p)$ for every type $p$, by Lemma \ref{L:nonfork_smallrm}.
\end{lemma}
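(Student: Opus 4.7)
The plan is to first establish the connectedness claim—existence of the extension $L$—by an explicit construction depending on the Cantor normal forms of the ranks, and then derive $\smallrm(p)\le\RU(p)$ by a standard transfinite induction using Lemma \ref{L:nonfork_smallrm}. As in the proof of Lemma \ref{L:nonfork_smallrm}, passing from $k$ to $\lambda(k)\cdot k$ leaves $\smallrm(a/k)$ invariant, so we may assume that $k$ is \lc. Write $\smallrm(a/k)=\omega\cdot n+m$ and $\alpha=\omega\cdot n'+m'$ in Cantor normal form with $n'\le n$, and split according to whether $n'=n$ or $n'<n$.

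If $n'=n$, then $m'<m$; choose a tuple $e$ in $\lambda(k(a))$ with $\tr(e/\lambda(k))=m-m'$ and set $L=k(e)$. Since $e\in E$, one has $E\cdot L=E\cdot k$ and $\tr(a/E\cdot L)=n$, whilst Remark \ref{R:lambda_min} yields $\lambda(L)=\lambda(k)(e)$ and $\lambda(L(a))=\lambda(k(a))(e)=\lambda(k(a))$, so $\smallrm(a/L)=\omega\cdot n+m'=\alpha$. If $n'<n$, pick a component $a_1$ of $a$ transcendental over $E\cdot k$ (which exists as $n\ge 1$), set $s=\max(1,m'-m)$, and use the saturation of the pair to choose $e_1,\dotsc,e_s$ in $E$ algebraically independent over $\lambda(k(a))$. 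Then $b=\sum_{i=1}^s e_i a_1^i$ is transcendental over $E\cdot k$ while $a_1$ is a root of $\sum_{i=1}^s e_i T^i - b$ over $E\cdot L$ with $L=k(b)$, giving $\tr(a/E\cdot L)=n-1$. As $k$ is \lc and $b$ is transcendental over $E\cdot k$, the field $L$ is also \lc with $\lambda(L)=\lambda(k)$. Since $1,a_1,\dotsc,a_1^s$ is $E$-linearly independent, the relation $b=\sum e_i a_1^i$ produces $e_1,\dotsc,e_s\in\lambda(L(a))$ via the $\lambda$-functions, and an application of Remark \ref{R:lambda_min} together with the linear disjointness $k(a)\cdot\lambda(k(a))\ind_{\lambda(k(a))} E$ actually yields $\lambda(L(a))=\lambda(k(a))(e_1,\dotsc,e_s)$. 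Hence $\smallrm(a/L)=\omega\cdot(n-1)+(m+s)\ge\omega\cdot n'+m'=\alpha$ (either the inequality $n-1>n'$ dominates, or $n-1=n'$ and $m+s\ge m'$ by the choice of $s$), and $\smallrm(a/L)<\omega\cdot n+m$ since $n-1<n$.

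Granted the connectedness claim, the inequality $\smallrm(p)\le\RU(p)$ follows by transfinite induction on $\smallrm(p)$: for every $\beta<\smallrm(p)$, the first part produces an extension $L$ with $\beta\le\smallrm(a/L)<\smallrm(a/k)$, so $\tp(a/L)$ is a forking extension of $p$ by Lemma \ref{L:nonfork_smallrm}, and the inductive hypothesis gives $\RU(a/L)\ge\smallrm(a/L)\ge\beta$, whence $\RU(p)\ge\beta+1$. The main technical obstacle is the upper inclusion $\lambda(L(a))\subseteq\lambda(k(a))(e_1,\dotsc,e_s)$, which is established via the minimality characterisation of $\lambda$-closures: the compositum $k(a,\lambda(k(a)),e_1,\dotsc,e_s)$ remains linearly disjoint from $E$ over $\lambda(k(a))(e_1,\dotsc,e_s)$, obtained by adjoining the $e_i$ to the base of the linear disjointness $k(a)\cdot\lambda(k(a))\ind_{\lambda(k(a))} E$ that witnesses the \lc-ness of its left-hand side.
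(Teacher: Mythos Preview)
Your proof is correct and takes essentially the same approach as the paper: the two constructions (adjoining elements of $\lambda(k(a))$ to lower the finite part, and the polynomial $b=\sum e_i a_1^i$ to drop the $\omega$-coefficient while forcing new $\lambda$-values) are identical to the paper's Claims~1 and~2. The only differences are organisational---you split directly by comparing Cantor normal forms ($n'=n$ versus $n'<n$) rather than into successor/limit cases, and you go further by computing $\lambda(L(a))$ exactly in the second case where the paper is content with the lower bound $\tr(\lambda(L(a))/\lambda(L))\ge m$.
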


\begin{proof}
  As in the proof of Lemma \ref{L:nonfork_smallrm}, we may assume that
  $k$ is \lc. The proof follows immediately by transfinite induction
  from the following two claims:
  \begin{claim}
    If $\smallrm(a/k)=\beta+1$, then there is some $L\supset k$ with
    $\smallrm(a/L)=\beta$.
  \end{claim}

  \begin{claimproof}
    Write \[ \beta+1=\smallrm(a/k)=\omega\cdot \tr(a/E\cdot k)+
    \tr(\lambda(k(a))/\lambda(k)),\] so
    $0<\tr(\lambda(k(a))/\lambda(k))=m+1$ for some natural number $m$.
    In particular, there is a transcendental element $e$ in
    $\lambda(k(a))$ over $\lambda(k)$. Set $L=k(e)$. Notice that $
    \lambda(L)=\lambda(k)(e)$ by Remark \ref{R:lambda_min}, since $k$
    and $E$ are linearly disjoint over $\lambda(k)$. Analogously, we
    have that $\lambda(L(a))=\lambda(k(a))(e)$. As $\tr
    (\lambda(k(a))/\lambda(k)(e))=m$, we conclude that \[
    \smallrm(a/L)=\omega\cdot \tr(a/E\cdot L) +
    \tr(\lambda(L(a))/\lambda(L))= \omega\cdot \tr(a/L\cdot E)
    +m=\beta,\] as desired.
  \end{claimproof}

  \begin{claim}
    If $\smallrm(a/k)=\omega\cdot (n+1)$ for some $n$ in $\N$, then
    for every $m$ in $\N$ there is some field extension $k\subset L$
    with $\omega\cdot n+m\le \smallrm(a/L)<\smallrm(a/k)$.
  \end{claim}

  \begin{claimproof}
    Since \[ \omega\cdot (n+1)=\smallrm(a/k)=\omega\cdot \tr(a/E\cdot
    k)+\tr(\lambda(k(a))/\lambda(k)),\] we deduce that $\lambda(k(a))$
    is algebraic over $\lambda(k)$, so $k(a)$ and $E$ are
    algebraically independent over $\lambda(k)$. Moreover, the
    transcendence degree $\tr(a/E\cdot k)$ is strictly positive, so
    choose some element $c$ in $k(a)$ transcendental over $E\cdot k$.
    By saturation, there are elements $e_0, \dotsc, e_{m-1}$ in $E$
    transcendental over $k$. Set $b=\sum_{i=0}^{m-1} e_i c^i$ and
    notice that that $b$ and $c$ are interalgebraic over $E\cdot k$.
    Thus, the element $b$ is transcendental over $E\cdot k$. It
    follows that $\tr(a/E\cdot k(b))=n<\tr(a/E\cdot k)$. Set thus
    $L=k(b)$ and notice that \[ \smallrm(a/L)<\smallrm(a/k).\] The
    field $L$ is trivially linearly disjoint from $E\cdot k$ over $k$,
    since $b$ is transcendental over $E\cdot k$. Therefore, the field
    $\lambda(L)$ equals $\lambda(k)$ by Remark \ref{R:lambda_min} and
    transitivity, for $k$ is \lc.

    Both elements $b$ and $c$ belong to $L(a)$, so $e_0, \dotsc,
    e_{m-1}$ lie in $\lambda(L(a))$. Hence, \[
    \tr(\lambda(L(a))/\lambda(L))\ge \tr(e_0, \dotsc,
    e_{m-1}/\lambda(k))=m.\] Therefore \[\smallrm(a/k) >
    \smallrm(a/L)=\omega\cdot \tr(a/E\cdot L) +
    \tr(\lambda(L(a))/\lambda(L))\ge \omega\cdot n +m,\] as
    desired.\end{claimproof}
\end{proof}

We are now left to bounding the Morley rank from above in terms of the
rank $\smallrm$. We will do so in terms of an explicit tame formula
$\chi$ which will isolate the type $p$ among those types
$\smallrm$-rank at least $\smallrm(p)$. However, the tame formula
$\chi$ we exhibit need not be the minimal tame formula in the type $p$
(as in Corollary \ref{C:min}).

\begin{prop}\label{P:rm_isol_tame}
  Consider a finite tuple $a$ and a \lc subfield $k$ of $K$. There
  exists an instance $\chi$ in $\tp_P(a/k)$ of a tame formula such
  that $\smallrm(b/k)\le \smallrm(a/k)$ for every realisation $b$ of
  $\chi$ in $K$. Moreover,
  \[ \smallrm(b/k)=\smallrm(a/k) \ \Longleftrightarrow \
  \tp_P(b/k)=\tp_P(a/k)\]
\end{prop}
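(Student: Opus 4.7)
The plan is to construct $\chi$ by adapting the Gr\"obner-basis strategy of Proposition \ref{P:Trick_proof}, with the crucial refinement that the Gr\"obner basis of $\I(a/E\cdot k)$ be chosen with coefficients in the field of definition $k\cdot\lambda(k(a))$, so as to simultaneously control both summands of $\smallrm$. Writing $n=\tr(a/E\cdot k)$ and $m=\tr(\lambda(k(a))/\lambda(k))$, I would first verify the identity $\I(a/E\cdot k) = \I(a/k\cdot\lambda(k(a)))\cdot (E\cdot k)[X]$, which follows by combining the defining linear disjointness of $k(a)\cdot\lambda(k(a))$ from $E$ over $\lambda(k(a))$ with that of $k$ from $E$ over $\lambda(k)$. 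A reduced Gr\"obner basis $r_1,\dots,r_\ell$ can then be chosen with coefficients in $k\cdot\lambda(k(a))$; after clearing denominators, we may write $r_i=r_i(X,e)$ with $r_i(X,Z)\in k[X,Z]$ linear in $Z$ and $e$ a finite tuple in $\lambda(k(a))$ satisfying $\tr(e/\lambda(k))=m$. Taking $\gamma(Z)$ to be the Zariski locus of $e$ over $\lambda(k)$ and $\sigma(Z)$ the product of the leading coefficients of the $r_i$, I would set
\[\chi(x) = \exists\,\zeta\in P^{|e|}\Bigl(\gamma(\zeta)\land\neg\sigma(\zeta)\doteq 0\land\Land_i r_i(x,\zeta)\doteq 0\Bigr),\]
which is an instance of a tame formula over $k$ by Fact \ref{F:tame1} and lies in $\tp_P(a/k)$, witnessed by $\zeta=e$.

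For any $b\models\chi$ with witness $f\in E^{|e|}$, the non-vanishing condition $\sigma(f)\neq 0$ preserves the leading monomials of the $r_i(X,f)$, so these form a Gr\"obner basis of the ideal they generate in $k(f)[X]$, defining a variety of dimension $n$ in $\overline{k(f)}^{|X|}$. Thus $\tr(b/E\cdot k)\le\tr(b/k(f))\le n$. If the first inequality is strict, then already $\smallrm(b/k)<\omega\cdot n\le\smallrm(a/k)$; otherwise $\tr(b/E\cdot k)=\tr(b/k(f))=n$, and so $b\ind_{k(f)}E$ in the $\mathrm{ACF}$-reduct. Corollary \ref{C:ind_lambda}, applicable since $k$ is $\lambda$-closed and $f\in E$, then yields $\lambda(k(b))\subseteq\overline{\lambda(k)(f)}$; combined with $\tr(f/\lambda(k))\le m$ coming from $\gamma(f)$, this gives $\tr(\lambda(k(b))/\lambda(k))\le m$. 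In either case, $\smallrm(b/k)\le\omega\cdot n+m=\smallrm(a/k)$.

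When $\smallrm(b/k)=\smallrm(a/k)$, the preceding inequalities tighten to equalities: $\tr(f/\lambda(k))=m$ makes $f$ a generic point of $\gamma$ over $\lambda(k)$, and $\tr(b/k(f))=n$ makes $b$ a generic point of the variety defined by the $r_i(X,f)$ over $k(f)$. The linear disjointness of $k$ from $E$ over $\lambda(k)$ promotes $\tp(f/\lambda(k))=\tp(e/\lambda(k))$ to $\tp(f/k)=\tp(e/k)$ in $\mathrm{ACF}$, and the $k$-isomorphism $k(e)\to k(f)$ sending $e\mapsto f$ extends via the generic points $a$ and $b$ to an identification of the $\mathrm{ACF}$-types of $(a,e)$ and $(b,f)$ over $k$. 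Lifting this to $\LL_P$ uses Delon's quantifier elimination (Fact \ref{F:Delon}): provided $e$ is enlarged (if necessary) so as to generate $\lambda(k(a))$ over $\lambda(k)$ as a field, the fields $k(a,e)$ and $k(b,f)$ are both $\lambda$-closed with matching predicate structure, hence $\tp_P((a,e)/k)=\tp_P((b,f)/k)$, and consequently $\tp_P(b/k)=\tp_P(a/k)$. The main obstacle I anticipate is precisely this final matching step: ensuring that $\lambda(k(a))$ is finitely generated over $\lambda(k)$ so that the required enlargement of $e$ fits within a first-order formula, and tracking which irreducible component of $V(r_i(X,e):i)$ contains $a$ so that the isomorphism sends it faithfully to the component containing $b$.
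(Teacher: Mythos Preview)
Your overall strategy is sound and close to the paper's, but the formula $\chi$ you write down is \emph{not} an instance of a tame formula, and Fact~\ref{F:tame1} does not make it one. A tame formula has the shape $\exists\,\zeta\in P^r\bigl(\neg\,\zeta\doteq 0\land\Land_j q_j(x,\zeta)\doteq 0\bigr)$ with the $q_j$ homogeneous in $\zeta$; the only open condition allowed inside the existential is the global non-vanishing $\neg\,\zeta\doteq 0$. Your formula instead carries the open condition $\neg\,\sigma(\zeta)\doteq 0$ (and, as written, also an affine locus $\gamma$), exactly as in the proof of Proposition~\ref{P:Trick_proof}, where the resulting $\theta$ is deliberately only an $\LL_P$-formula, not a tame one. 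That was enough there because Corollary~\ref{C:min}(c) allows an arbitrary witness; here the statement demands a tame $\chi$, and the leading-coefficient condition cannot be absorbed into the projective framework of Fact~\ref{F:tame1}.

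The paper's proof removes this obstruction by two devices you should incorporate. First, it assigns to each generator $r_i$ its \emph{own} projective tuple $\zeta_i$, writing $r_i(X,Z_i)$ linear in $Z_i$ with coefficients expressed through a fixed $\lambda(k)$-basis of $k$; linear disjointness of $k$ and $E$ over $\lambda(k)$ then guarantees $r_i(X,f_i)\neq 0$ whenever $f_i\neq 0$, so the only open condition needed is $\Land_i\neg\,\zeta_i\doteq 0$, which is exactly of tame shape via Fact~\ref{F:tame1}. Second, instead of relying on a Gr\"obner basis (which needs the leading coefficients to survive) to bound $\tr(b/k(\bar f))$, the paper simply enlarges the generating set so that for every $(n{+}1)$-element subtuple of $a$ some $r_i$ witnesses its algebraic dependence; since each $r_i(X,f_i)$ is a non-zero polynomial, the bound $\tr(b/k(\bar f))\le n$ follows directly. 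With these adjustments the rest of your argument (the use of Corollary~\ref{C:ind_lambda}, the equality case via Fact~\ref{F:Delon}, and the transport of $\LL_P$-types) goes through essentially as you outline, and the concern about finite generation of $\lambda(k(a))$ disappears: one only needs $\lambda(k(a))\subset\acl(\lambda(k)(\bar e))$, which is automatic once $\bar e$ carries the coefficients of a generating set of $\I(a/E\cdot k)$.
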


\begin{proof}
  Let $n=\tr(a/E\cdot k)$. Using \cite[Theorem III.8]{sL58} we
  conclude from \[a\ld_{\lambda(k(a))\cdot k}E\cdot k,\]
  that $\I(a/E\cdot k)$ has generators $r_1(X),\dotsc,r_N(X)$ with
  coefficients in the ring generated by $k\cup \lambda(k(a))$, after
  possibly clearing denominators. Write thus $r_i(X)=r_i(X,e_i)$, for
  polynomials $r_i(X,Z)\in k[X,Z]$ and tuples $e_i$ in
  $\lambda(k(a))$. We may assume that each $r_i$ is linear in $Z$ with
  $r_i(X,f_i)$ non-zero whenever the tuple $f_i$ from $E$ is non-zero.

  Setting now $n=\tr\bigl(a/k\cdot\lambda(k(a))\bigr)$, we may assume
  that for each $n+1$-element subtuple of $a$, one of the $r_i$'s
  witnesses that this subtuple is algebraically dependent over
  $\lambda(k(a))\cdot k$.

  Consider the tuple $\bar
  e=(e_1,\dotsc,e_N)$ as an element of $\mathbb {P}^{|e_1|}\times \cdots\times \mathbb{P}^{|e_N|}$ and denote by $\gamma(Z_1,\dotsc,Z_N)$ the locus of $\bar e$ over $k$, homogeneous in every $Z_i$. Then the
  $\LL_P$-formula \[ \chi(x) = \exists\, \zeta_1\in P \ldots \exists\,
  \zeta_N\in P \Bigl(\Land_{i=1}^N \neg\,\zeta_i\doteq 0\; \land\;
  \gamma(\zeta_1,\dotsc,\zeta_N)\;\land\; \Land\limits_{i=1}^N
  r_i(x,\zeta_i)\doteq 0 \Bigr)\] in $\tp_P(a/k)$ is equivalent to a
  tame formula, by Fact \ref{F:tame1}.

  Given a realisation $b$ of $\chi$, let $ \bar f=(f_1,\dotsc, f_N)$
  be non-trivial tuples in $E$ with $r_1(b, f_1)=\dotsb=r_N(b,
  f_n)=0$. Since the $r_i(X,f_i)$ are non-zero, it follows that
  \[\tr(b/E\cdot k)\le\tr(b/k(\bar f))\le n.\]

  If $\tr(b/E\cdot k)<n$, we have $\smallrm(b/k)<\smallrm(a/k)$. So
  let us assume from now on that $\tr(b/E\cdot k)=n$, so
  $b\ind_{k(\bar f)}E$. Corollary \ref{C:ind_lambda} yields now that
  $\lambda(k(b))\subset\acl\bigl(\lambda(k)(\bar f)\bigr)$, so
  $\tr(\lambda(k(b))/\lambda(k))\le\tr(\bar f/\lambda(k))$.
  Analogously, we have that
  \[\tr(\lambda(k(a))/\lambda(k))=\tr(\bar
  e/\lambda(k)),\] since the tuple $\bar e$ belongs to
  $\lambda(k(a))$. Since $\bar f$ satisfies $\gamma(\bar Z)$, we
  have \[\tr(\bar f/\lambda(k))\le\tr(\bar e/\lambda(k)).\]

  If the inequality is strict, we deduce that
  $\smallrm(b/k)<\smallrm(a/k)$. Assume therefore that $\tr(\bar
  f/\lambda(k))=\tr(\bar e/\lambda(k))$. We want to show that $b$ and
  $a$ have the same $\LL_P$-type over $k$. First, observe that
  $\tr(\bar f/k)=\tr(\bar e/k)$, so $\bar f$ and $\bar e$ have the
  same type over $k$, as a sequence of homogeneous tuples. By Fact
  \ref{F:Delon}, using that $k$ is \lc, we deduce that $\bar f$ and
  $\bar e$ have the same $\LL_P$-type over $k$. Hence, there exists a
  tuple $a'$ in $K$ such that $(a',\bar e)$ has the same $\LL_P$-type
  over $k$ as $(b,\bar f)$. In particular, the tuple $a'$ is a
  solution of $\I(a/k\cdot E)$ with $\tr(a'/k\cdot E)=n$. This implies
  that $a'$ and $a$ have the same type over $E\cdot k$, and thus the
  same $\LL_P$-type over $k$ by Fact \ref{F:Delon}, as desired.
  \end{proof}

\begin{cor}\label{C:rmRM}
  Given a subfield $k$ of $K$ and a finite tuple $a$, we have that
  $\RM(a/k)\le \smallrm(a/k)$.
\end{cor}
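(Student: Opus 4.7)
The plan is to proceed by induction on $\alpha=\smallrm(a/k)$, showing $\RM(a/k)\le\alpha$. Since $\lambda(k)\subseteq\dcl_P(k)$, replacing $k$ by $\lambda(k)\cdot k$ leaves both $\RM$ and $\smallrm$ unchanged (for the latter, note that $E\cdot k=E\cdot\lambda(k)\cdot k$ and $\lambda(\lambda(k)\cdot k)=\lambda(k)$ by Remark \ref{R:lambda_min}), so I may assume $k$ is \lc. Applying Proposition \ref{P:rm_isol_tame} then produces a tame formula $\chi(x)\in\tp_P(a/k)=:p$ whose realisations $b$ all satisfy $\smallrm(b/k)\le\alpha$, with equality forcing $\tp_P(b/k)=p$. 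Since $\chi\in p$, it suffices to prove $\RM(\chi)\le\alpha$.

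To this end, I would fix an $\aleph_0$-saturated elementary substructure $M$ of the ambient pair containing $k$ and invoke the standard totally-transcendental characterisation: $\RM(\chi)\le\alpha$ is equivalent to there being only finitely many complete $\LL_P$-types $q$ over $M$ extending $\chi$ with $\RM(q)\ge\alpha$. Let $q$ be such a type, realised by a tuple $b$ in the ambient pair. Both $\RM$ and $\smallrm$ are monotone non-increasing under extension of parameters; the monotonicity of $\smallrm$ is immediate from Lemma \ref{L:nonfork_smallrm}, whose proof shows that forking strictly decreases $\smallrm$. Hence $\smallrm(b/M)\le\smallrm(b/k)\le\alpha$. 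Were the inequality $\smallrm(b/M)<\alpha$ strict, the induction hypothesis would yield $\RM(q)=\RM(b/M)\le\smallrm(b/M)<\alpha$, contradicting the choice of $q$. Therefore $\smallrm(b/M)=\alpha=\smallrm(b/k)$, which via Proposition \ref{P:rm_isol_tame} forces $\tp_P(b/k)=p$, and via Lemma \ref{L:nonfork_smallrm} forces $b\ind^P_k M$; so $q$ is a non-forking extension of $p$ to $M$.

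The $\omega$-stability of $\acfp$ recalled at the beginning of Section \ref{S:belles} guarantees that $p$ admits only finitely many non-forking extensions to $M$, supplying the required finiteness and yielding $\RM(\chi)\le\alpha=\smallrm(p)$. The most delicate point will be the invocation of the totally-transcendental characterisation of Morley rank via counting types over a saturated model: although each non-forking extension $q$ of $p$ a priori has $\RM(q)=\RM(p)$, which is exactly the quantity we aim to bound, the mere finiteness of the count at level $\alpha$ already forces $\RM(\chi)\le\alpha$, and with it $\RM(p)\le\RM(\chi)\le\alpha$, thereby avoiding what might appear to be a circularity in the argument.
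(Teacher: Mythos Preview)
Your proof is correct. Both your argument and the paper's hinge on Proposition~\ref{P:rm_isol_tame} together with an induction, but the organisation differs. The paper first replaces $k$ by an $\aleph_0$-saturated elementary substructure (using that both $\RM$ and $\smallrm$ are preserved under non-forking extension, via Lemma~\ref{L:nonfork_smallrm}), and then inducts on $\alpha$ to show that $\alpha\le\RM(a/k)$ implies $\alpha\le\smallrm(a/k)$: for $\alpha=\beta+1$, the accumulation-point characterisation of Morley rank over the now saturated $k$ yields another type containing $\chi$ of Morley rank at least $\beta$, whose $\smallrm$-rank is strictly smaller by the proposition, and the induction hypothesis closes the gap. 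You instead keep $k$ merely \lc, pass to a saturated $M\supseteq k$, induct on $\smallrm(a/k)$, and count types over $M$; the extra ingredient you need is the finiteness of non-forking extensions of $p$ to $M$. The paper's route is slightly shorter, since working directly over a saturated base avoids the separate non-forking count; your route has the minor merit of making explicit that every type over $M$ of rank at least $\alpha$ containing $\chi$ is a non-forking extension of $p$, which is a pleasant structural by-product even if not strictly needed.
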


\begin{proof}
  Morley rank does not change working over independent parameters and
  neither does $\smallrm$ by Lemma \ref{L:nonfork_smallrm}. Thus, we
  may assume that $k$ is an $\aleph_0$-saturated elementary
  substructure of the pair $(K, E)$ and in particular \lc.

  For the inequality $\RM(a/k)\le \smallrm(a/k)$, it suffices to show
  inductively on $\alpha$ that $\alpha\le \smallrm(a/k)$ whenever
  $\alpha\le \RM(a/k)$. We need only consider the case
  $\alpha=\beta+1$. Let $\chi$ be a tame formula in $\tp_P(a/k)$ as in
  Proposition \ref{P:rm_isol_tame}. If $\beta+1\le \RM(a/k)$, the type
  $\tp_P(a/k)$ is an accumulation point of types over $k$ of Morley
  rank at least $\beta$. We may thus assume that there is a type
  $\tp_P(b/k)\ne \tp_P(a/k)$ of Morley rank at least $\beta$
  containing the formula $\chi$, so \[
  \smallrm(b/k)\stackrel{\ref{P:rm_isol_tame}}{<} \smallrm(a/k).\]

  By induction on $\beta\le \RM(b/k)$, we have that $\beta\le
  \smallrm(b/k)$, so $\alpha=\beta+1\le \smallrm(a/k)$, as
  desired.
\end{proof}

We deduce immediately from Lemma \ref{L:U_smallrm} as well as
Corollary \ref{C:rmRM} that Poizat's formula holds for proper pairs of
algebraically closed fields.

\begin{cor}\label{C:Poizatsformel}
  In every sufficiently saturated proper pair $(K,E)$ of algebraically
  closed fields, Morley and Lascar rank agree: Given a finite tuple
  $a$ and a subfield $k$ of $K$,
  \[ \RU(a/k)=\RM(a/k)=
  \omega \cdot \tr(a/E\cdot k) + \tr(\lambda(k(a))/\lambda(k)).\]
\end{cor}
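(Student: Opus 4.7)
The plan is to observe that at this point all of the work has already been done in the preceding results, and Corollary \ref{C:Poizatsformel} follows by chaining three inequalities.

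First I would recall the general stability-theoretic fact that for any type $p$ whose Morley rank is defined (which holds in $\acfp$ since it is $\omega$-stable), we have $\RU(p)\le \RM(p)$. Combined with Lemma \ref{L:U_smallrm}, which gives $\smallrm(p)\le \RU(p)$, and Corollary \ref{C:rmRM}, which gives $\RM(p)\le \smallrm(p)$, we obtain the chain
\[ \smallrm(a/k)\le \RU(a/k)\le \RM(a/k)\le \smallrm(a/k), \]
forcing equality throughout. Unfolding the definition
\[ \smallrm(a/k)=\omega\cdot \tr(a/E\cdot k)+\tr(\lambda(k(a))/\lambda(k)) \]
yields exactly Poizat's formula.

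The only conceptual content is to verify that $\RU\le \RM$ can be invoked in our setting and that none of the previous lemmata restricted $k$ unnecessarily (one can always reduce to the \lc case since adding values of $\lambda$-functions does not affect any of the invariants involved, as noted in the proof of Lemma \ref{L:nonfork_smallrm}). No additional obstacle arises: the main work was already done in establishing Lemma \ref{L:U_smallrm} (producing, for each $\alpha<\smallrm(a/k)$, an intermediate field forcing $\smallrm$ to drop to at least $\alpha$) and in Proposition \ref{P:rm_isol_tame} together with Corollary \ref{C:rmRM} (producing a tame formula isolating $\tp_P(a/k)$ among types of $\smallrm$-rank at least $\smallrm(a/k)$, from which the Morley-rank bound is extracted by induction).
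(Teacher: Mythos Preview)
Your proposal is correct and matches the paper's own argument essentially verbatim: the paper also chains $\smallrm\le\RU$ (Lemma~\ref{L:U_smallrm}), the general inequality $\RU\le\RM$, and $\RM\le\smallrm$ (Corollary~\ref{C:rmRM}) to obtain equality throughout. Your additional remark about the reduction to the \lc case is accurate and consistent with the paper's earlier observation in the proof of Lemma~\ref{L:nonfork_smallrm}.
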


By Fact \ref{F:Delon}, Propositions \ref{P:tame_instance} (Claim
\ref{Claim:dcl}) and \ref{P:rm_isol_tame} as well as Corollaries
\ref{C:Noether_iso_Model} and \ref{C:Poizatsformel}, we deduce the
following result:

\begin{cor}
  Given a type $p$ over an arbitrary subset $A$ of parameters of
  $(K,E)$, there exists an instance of a tame formula in $p$ which
  isolates it among all types over $A$ of rank at least $\RM(p)$. In
  particular, the theory $\acfp$ has Noetherian isolation.
\end{cor}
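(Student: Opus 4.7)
The plan is to combine three ingredients already established in the paper: the tame-isolation result for types over $\lambda$-closed subfields (Proposition \ref{P:rm_isol_tame}), the agreement of Morley rank with the rank $\smallrm$ (Corollary \ref{C:Poizatsformel}), and the reduction of parameters from $\dcl_P(A)$ back to $A$ (Claim \ref{Claim:dcl} inside the proof of Proposition \ref{P:tame_instance}).

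First I reduce to the case where the parameter set is a $\lambda$-closed subfield. Set $k=\dcl_P(A)$, which by Remark \ref{R:lambda_closed} is a $\lambda$-closed subfield of $K$. The type $p$ over $A$ extends uniquely to a type $\tilde p$ over $k$, and this restriction/extension bijection preserves Morley rank. Applying Proposition \ref{P:rm_isol_tame} to $\tilde p$ yields an instance $\chi(x)$ of a tame formula with parameters in $k$ belonging to $\tilde p$ such that every type $\tilde q$ over $k$ containing $\chi$ satisfies $\smallrm(\tilde q)\le \smallrm(\tilde p)$, with equality only when $\tilde q=\tilde p$. By Corollary \ref{C:Poizatsformel}, the ranks $\smallrm$ and $\RM$ coincide on types over the $\lambda$-closed subfield $k$, so $\chi$ actually isolates $\tilde p$ among all types over $k$ of Morley rank at least $\RM(\tilde p)=\RM(p)$.

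The remaining task is to replace the parameters of $\chi$, which lie in $\dcl_P(A)$, by parameters in $A$ itself. This is precisely what Claim \ref{Claim:dcl} in the proof of Proposition \ref{P:tame_instance} supplies: there is an instance $\chi'(x)$ of a tame formula with parameters in $A$ logically equivalent to $\chi$. Then $\chi'$ belongs to $p$ and, via the rank-preserving bijection between types over $A$ and types over $k$, isolates $p$ among all types over $A$ of Morley rank at least $\RM(p)$. The particular claim of Noetherian isolation then follows directly from Definition \ref{D:isol}.

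The main conceptual hinge is Poizat's formula: Proposition \ref{P:rm_isol_tame} is naturally phrased in terms of $\smallrm$, and without the identification $\RM=\smallrm$ on types over $\lambda$-closed subfields one would only obtain isolation among types of high $\smallrm$-rank, which is a priori weaker than isolation among types of high Morley rank. The transfer from $\dcl_P(A)$-parameters to $A$-parameters is then a purely formal step once Claim \ref{Claim:dcl} is in hand, since instances of tame formulae are closed under the operations needed to eliminate $\lambda$-function values from the parameters.
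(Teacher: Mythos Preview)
Your proof is correct and follows essentially the same route as the paper: pass to the $\lambda$-closed field $\dcl_P(A)$, invoke Proposition \ref{P:rm_isol_tame} together with Poizat's formula (Corollary \ref{C:Poizatsformel}) to convert $\smallrm$-isolation into $\RM$-isolation, and then pull the parameters back to $A$ via Claim \ref{Claim:dcl}. The only cosmetic difference is that the paper also cites Corollary \ref{C:Noether_iso_Model} (reducing Noetherian isolation to types over models), whereas you bypass that detour by working directly over $\dcl_P(A)$ for arbitrary $A$; your argument already yields Noetherian isolation straight from Definition \ref{D:isol}, so nothing is lost.
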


\section{Minimal formulae and Hilbert schemes}\label{S:Hilbertpol}
Corollary \ref{C:min} and Corollary \ref{C:acfp_noether} together
yield that every type contains a minimal tame formula. The goal of
this section, which can be seen as a complement, is to provide an
explicit description of the minimal tame formula in a given type
$\tp_P(a/k)$ whenever $k$ is \lc. (It follows from Proposition
\ref{P:tame_instance} (Claim \ref{Claim:dcl})) that the parameter set
being \lc is not an actual obstacle). For this, we will use
Grothendieck's Hilbert Scheme. Since the classical references we
consulted do not explicitly provide the construction of the Hilbert
scheme as a projective variety, we have decided to exhibit the
construction here, in Theorem \ref{T:hilbertscheme}, keeping the
exposition as elementary as possible. Most of the results from
algebraic geometry in this section can be found in \cite{EH00, dM66},
unless explicitly stated.\\

Fix a base field $F$, write $X$ for the sequence of variables $\XX$.
Given a homogeneous ideal $J$ of $F[X]$, let $J_d$ be the collection
of all homogeneous polynomials in $J$ of degree $d$. It was shown by
Hilbert \cite{dH1890} that the codimension of $J_d$ is given in terms
of a (unique) numerical polynomial $Q_J(T)$, i.e.\ having rational
coefficients, yet taking integer values when evaluated on $\Z$, called
the \emph{Hilbert polynomial} of $J$: there exists a degree $d_0$ such
that $Q_J(d)=\codim_F (J_d, F[X]_d)$ for $d\ge d_0$. It follows that
  \[\dim_F J_d=\binom{d+n}{n}-Q_J(d)\]
is also a numerical polynomial in $d$ for $d\ge d_0$.\\

\begin{definition}\label{D:Sat_Ideal}
  A homogeneous ideal $I$ of $F[X]$ is \emph{saturated} if the
  homogeneous polynomial $p(X)$ belongs to $I$ whenever $X_i^D\cdot p$
  belongs to $I$ for all $0\le i\le n$ and $D$ large enough.
\end{definition}
The homogeneous vanishing ideal over $F$ of a non-zero tuple in some
field extension of $F$ is clearly saturated. Given a homogeneous ideal
$J$ of $F[X]$, the smallest saturated ideal $\Sat(J)$ containing $J$
consists of all homogeneous polynomials $p(\bar X)$ such that for some
$D$ all the products $X_i^D\cdot p$ with $0\le i\le n$ belong to $J$.

\begin{remark}\label{R:Grad}


  Let us point out that $J_d$ and $\Sat(J)_d$ are equal for large
  enough $d$, and thus $J$ and $\Sat(J)$ have the same Hilbert
  polynomial. Indeed, the ideal $\Sat(J)$ is finitely generated by the
  polynomials $p_1,\ldots, p_m$. In particular, there exists some
  $D\ge 1$ such that $X_i^D\cdot p_j$ belongs to $J$ for all $0\le
  i\le n$. Let $d$ be greater than \[ (n+1) (D-1) +\max\limits_{1\le
    j\le m}{\deg(p_j)}. \] If $p$ belongs to $\Sat(J)_d$, then $p$ is
  a linear combination over $F$ of products of the form
  $M_\alpha(X)\cdot p_j$ for some monomials $M_\alpha(X)$ with
  $\deg(M_\alpha)= d-\deg p_j> (n+1)(D-1)$. It follows that
  $M_\alpha(X)$ contains a factor of the form $X_i^r$ with $r\ge D$
  for some $0\le i\le n$, so each $M_\alpha(X)\cdot p_j$, and hence
  $p$, belongs to $J$, as desired.

  Furthermore, two saturated ideals $J$ and $J'$ are equal whenever
  $J_d=J'_d$ for infinitely many $d's$: Indeed, if $p$ belongs to $J$,
  then $X_i^D\cdot p$ is in $J_d$ for $d\ge \deg(p)$ with
  $D=d-\deg(p)$.
\end{remark}

\begin{theorem}[Grothendiek's Hilbert scheme of $\mathbb P^n$]
  \label{T:hilbertscheme}
  For every $n$ and every numerical polynomial $Q$ the collection of
  saturated ideals in $F[X]$ with Hilbert polynomial $Q$ is in
  bijection with a projective variety $\hs(F)\subset\mathbb{P}^N(F)$
  such that, denoting by $I^{(\eta)}$ the corresponding to a tuple
  $\eta$ of $\hs(F)$ (in homogeneous coordinates), we have the
  following:
  \begin{enumerate}[(1)]
  \item\label{T:hilbertscheme:erzeugt} There is a finite set of
    polynomials $S(X,\eta)$ with integer coefficients, homogeneous
    both in $X$ and in $\eta$ which generate $I^{(\eta)}$ as a
    saturated ideal, that is, if $\scl{S(X,\eta)}$ denotes the ideal
    generated by $S(X,\eta)$, then $I^{(\eta)}=\Sat(\scl{S(X,\eta)})$.
  \item\label{T:hilbertscheme:element} For every degree $d$ there is a
    finite set $S_d(Y,\eta)$ of polynomials with integer coefficients,
    homogeneous both in $Y$ and in $\eta$, such that
    \[\sum_{|\beta|=d}c_\beta X^\beta\in I^{(\eta)}\;\Leftrightarrow\;
    S_d(c,\eta)=0\] for every tuple $c=(c_\beta)$ in $F$.
  \end{enumerate}
The variety $\hs$ as well as the sets $S$ and $S_d$ are all defined
over $\Z$ and do not depend on the field $F$.
\end{theorem}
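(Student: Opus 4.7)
My plan is to realise $\hs$ as a closed subvariety of a Grassmannian, embedded projectively via the Plücker embedding, and then cut it out by explicit polynomial conditions with integer coefficients. The guiding idea is that a saturated ideal with Hilbert polynomial $Q$ should be determined by its degree-$d_0$ piece for some uniform $d_0$ depending only on $n$ and $Q$, and that this piece is a subspace of prescribed codimension inside the finite-dimensional $F$-vector space $F[X]_{d_0}$.

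The critical first step is a \emph{uniform degree bound}: I would produce an integer $d_0 = d_0(n, Q)$, independent of $F$, such that for every saturated homogeneous ideal $I \subset F[X]$ with Hilbert polynomial $Q$ one has $\dim_F I_d = \binom{d+n}{n} - Q(d)$ for all $d \ge d_0$, and $I = \Sat(\scl{I_{d_0}})$. This is essentially Gotzmann's regularity theorem, and I expect it to be the main obstacle, since it rests on Macaulay-type combinatorial estimates for Hilbert functions that must be uniform in both the ideal and the ambient field. Together with Remark \ref{R:Grad}, it reduces the classification of saturated ideals with Hilbert polynomial $Q$ to that of subspaces of $F[X]_{d_0}$ of codimension $Q(d_0)$ whose generated ideal already has the correct codimension in the next degree.

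Given such $d_0$, set $N_0 = \binom{d_0+n}{n}$ and $k = N_0 - Q(d_0)$. Codimension-$Q(d_0)$ subspaces of $F[X]_{d_0}$ are parameterised by the Grassmannian $\Gr(k, N_0)$, which sits inside $\mathbb{P}^{M}(F)$ with $M = \binom{N_0}{k} - 1$ as the zero locus of the Plücker quadrics, all defined over $\Z$. Standard Plücker charts provide polynomial formulas with integer coefficients in the Plücker coordinates $\eta$ for a basis of the corresponding subspace $V^{(\eta)} \subseteq F[X]_{d_0}$; this yields the family $S(X, \eta)$ required in (\ref{T:hilbertscheme:erzeugt}), homogeneous in both $X$ and $\eta$. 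To carve out $\hs$ inside $\Gr(k, N_0)$, I would impose the closed condition that the multiplication map $F[X]_1 \otimes V^{(\eta)} \to F[X]_{d_0+1}$ has image of codimension at least $Q(d_0+1)$, a determinantal condition in $\eta$ expressible with integer coefficients. By Gotzmann's persistence theorem this single-degree condition propagates to all higher degrees, so $\Sat(\scl{V^{(\eta)}})$ has Hilbert polynomial $Q$ throughout, yielding the desired bijection.

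Finally, to exhibit the membership polynomials $S_d(Y, \eta)$ in (\ref{T:hilbertscheme:element}): for $d \ge d_0$ the piece $I^{(\eta)}_d$ is spanned by the explicit products $X^\alpha \cdot s(X, \eta)$ with $s \in S(X, \eta)$ and $|\alpha| = d - d_0$, so the condition $\sum_{|\beta|=d} c_\beta X^\beta \in I^{(\eta)}_d$ translates to the vanishing of suitable maximal minors of the matrix whose rows are the coefficient vectors of these generators together with $(c_\beta)$. For $d < d_0$, saturation reduces the question to checking $X_i^{d_0-d} \cdot p(X) \in I^{(\eta)}_{d_0}$ for every $0 \le i \le n$, which falls into the previous case. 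All the defining polynomials produced this way have integer coefficients and are independent of the base field $F$, yielding the last assertion of the theorem.
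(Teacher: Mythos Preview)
Your proposal is correct and follows essentially the same route as the paper: parametrise saturated ideals with Hilbert polynomial $Q$ by their degree-$d_0$ piece inside the Grassmannian (via Pl\"ucker coordinates), cut out $\hs$ by determinantal rank conditions on the multiplication map, recover generators of $I^{(\eta)}$ as linear forms in $\eta$, and handle membership in degree $d$ by minors (reducing $d<d_0$ to $d_0$ via saturation). The only substantive difference is the packaging of the regularity input: the paper quotes Mumford's uniform regularity bound for $d_0$ together with Macaulay's growth theorem to turn the equality $\dim\scl{U}_d=\binom{d+n}{n}-Q(d)$ into the closed inequality $\le$ for every $d\ge d_0$, whereas you invoke Gotzmann's regularity and persistence theorems to reduce everything to the single closed condition at degree $d_0+1$. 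Gotzmann's theorems are precisely the sharpening of the Macaulay/Mumford results that makes this one-step check possible, so the two arguments are interchangeable; yours is slightly tidier in that it produces a finite system of equations directly rather than an a priori infinite family made finite by Noetherianity. One small terminological point: ``Pl\"ucker charts'' suggests local expressions, but what you need (and what the paper uses) is the global recovery of generators of $V^{(\eta)}$ via the contraction $e^*\pluck\eta$ for $e^*$ ranging over a basis of $\extp^{N_0-1}(F[X]_{d_0})^*$, which yields polynomials homogeneous in $X$ and linear in $\eta$ with integer coefficients.
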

\begin{proof}
  We fix $n$ and $Q$ as in the statement. By a result of Mumford
  (\cite[Theorem III-55 \& Page 263]{EH00} \& \cite[Lecture 14]{dM66})
  there is a degree $d_0$ such that for every \emph{saturated} ideal
  $I$ of $F[X]$ with Hilbert polynomial $Q$ we have
  \[\dim_F I_d=\binom{d+n}{n}-Q(d)\quad
  \text{ for all } d\ge d_0.\] \noindent Moreover, the ideal
  $\bigoplus\limits_{d\ge d_0} I_d$ is generated by $I_{d_0}$. This
  has the following consequence.
  \begin{claim}\label{Claim:Corresp_Hilbert}
    The assignment $I\mapsto U=I_{d_0}$ defines a bijection between
    all saturated ideals $I$ of $F[\XX]$ with Hilbert polynomial $Q$
    and the set $\Hs$ of all subspaces $U$ of $F[\XX]_{d_0}$
    satisfying \[
    \label{E:Gleichung}
    \dim_F\,\scl{U}_d=\binom{d+n}{n}-Q(d) \quad\text{ for all }d\ge
    d_0 \tag*{(\ding{249})},\] where $\scl U$ is the ideal generated
    by $U$.
  \end{claim}

  \noindent Note that all elements of $\Hs$ have the same dimension,
  namely $N_0=\binom{d_0+n}{n}-Q(d_0)$.\\

  \begin{claimproof}
    If $I$ is saturated with Hilbert polynomial $Q$, then $U=I_{d_0}$
    belongs to $\hs$, since $\scl{U}_d=I_d$ for all $d\ge d_0$. This
    also shows that $I=\Sat\scl U$ is uniquely determined by $U$.
    Conversely, if $U$ belongs to $\hs$, it follows that $I=\Sat(\scl
    U)$ has Hilbert polynomial $Q$, by Remark \ref{R:Grad}. In
    particular, \[ \dim_F I_{d_0}=N_0 = \dim_F U,\] whence
    $I_{d_0}=U$, as desired.
  \end{claimproof}

  We will see below that the collection of $N_0$-dimensional subspaces
  of $F[X]_{d_0}$ form a projective variety $\Gr_{N_0}(F[X]_{d_0})$,
  called the \emph{$N_0^{th}$-Grassmannian}. Let $\hs$ be the subset
  of $\Gr_{N_0}(F[X]_{d_0})$ which corresponds to $\Hs$. Together with
  Claim \ref{Claim:Corresp_Hilbert}, Claim \ref{Claim:hilbert_schema}
  below yields that $\hs$ is a Zariski closed subset of
  $\Gr_{N_0}(F[X]_{d_0})$ definable without parameters (or defined
  over $\Z$ in algebraic terms). It is easy to see that every suitable
  choice of the degree $d_0$ yields the same variety $\hs$, up to
  canonical isomorphism.

  Fix some $s$ in $\N$. To view the set of all $r$-dimensional
  subspaces $V$ of the vector space $F^s$ as a projective variety, we
  will encode $V$ by the exterior product $v_1\wedge\dotsb\wedge v_r$,
  where $v_1,\ldots, v_r$ is some basis of $V$. Up to a non-zero
  scalar factor this exterior product only depends on $V$, so it
  determines a unique element of the projective space $\mathbb
  P(\extp^{r} F^s)$. Its (homogeneous) coordinates are the
  \emph{Pl\"ucker coordinates} $\pk(V)$ of $V$. Given Pl\"ucker
  coordinates $\pk(V)=v_1\wedge\dotsb\wedge v_r$ in $\mathbb
  P(\extp^{r} F^s)$, we recover $V$ as the set of all vectors $v$ in
  $F^s$ such that $v\wedge(v_1\wedge\dotsb\wedge v_r)=0$. The
  collection $\Gr_r(F^s)$ of Pl\"ucker coordinates $\eta$ is given by
  the quadratic equations $\eta\wedge(e^*\pluck\eta)=0$, where $e^*$
  runs through some basis of $\extp^{r-1}(F^s)^*$ and the map \[
  \pluck:\extp^{r-1} (F^s)^* \times \extp^{r} (F^s) \to F^s\] is the
  (bilinear) inner product of exterior algebra (see \cite[Page
    182]{jD74}). Thus, the $r^{th}$-Grassmannian $\Gr_r(F^s)$ of $F^s$
  is a projective variety.\\

  Note that $\binom{s}{r}=\dim\extp^r(F^s)$, so
  $\Gr_{N_0}(F[X]_{d_0})$ is a subvariety of $\mathbb P(F^N)$ with
  $N=\binom{\binom{d_0+n}{n} } {N_0}$.

  We will now use the following result of Macaulay \cite{fsM27} (of which Sperner
  gave a simplified proof in \cite{eS30}): \emph{If $Q$ is the
    Hilbert polynomial of some homogeneous ideal in $n+1$ variables
    and $d_1$ is large enough, then for any homogeneous ideal $J$ of
    $F[X]$ with
  \[\dim J_{d_1}\geq\binom{d_1+n}{n}-Q(d_1),\]
  it follows that
  \[\dim J_d\geq\binom{d+n}{n}-Q(d)\quad\text{ for all } d\geq d_1.\]
  }

  \begin{claim}\label{Claim:hilbert_schema}
    For $d_0$ large enough, the set $\hs$ as defined above is a
    Zariski closed subset of $\Gr_{N_0}F[X]_{d_0}$, definable without
    parameters.
  \end{claim}

  \begin{claimproof}
    If no ideal of $F[X]$ has $Q$ as Hilbert polynomial, then $\hs$ is
    empty and thus Zariski closed. Otherwise, we may assume that $Q$
    satisfies Macaulay's property with respect to $d_1$, which we may
    assume to be equal to $d_0$ as in Mumford's result. It now follows
    that an $N_0$-dimensional subspace $U$ of $F[X]_{d_0}$ satisfies
    the condition \eqref{E:Gleichung} of Claim
    \ref{Claim:Corresp_Hilbert} if and only if
    \[\dim_F\,\scl{U}_d\le\binom{d+n}{n}-Q(d) \quad\text{ for all }d\ge
    d_0.\] If $\eta$ are the Pl\"ucker coordinates of $U$, the
    polynomials $e^*\pluck\eta$, where $e^*$ runs among the elements
    of the canonical basis of $\extp^{N_0-1}(F[X]_{d_0})^*$, generate
    $U$ as an $F$--vector space \cite[R\'esultats d'Alg\`ebre (A,44)
      p.183, IX]{jD74}. Thus, the graded component $\scl{U}_d$ of the
    ideal $\scl U$ is generated as an $F$--vector space by the
    polynomials $M_\alpha \cdot(e^*\pluck\eta)$, where the
    $M_\alpha$'s enumerate all monomials in the variables $X$ of
    degree $d-d_0$. Hence, the Pl\"ucker coordinates $\eta$ belong to
    $\hs$ if and only if for all $d\geq d_0$ the dimension of the
    vector space generated by the $M_\alpha\cdot(e^* \pluck\eta)$ is
    bounded by $\binom{d+n}{n}-Q(d)$. The last condition can be
    expressed by determinantal equations in the coefficients of
    $\eta$, as desired.
  \end{claimproof}

  Property (\ref{T:hilbertscheme:erzeugt}) follows easily from the
  proof of the last claim, if one defines $S(X,\eta)$ as the set of
  all $(e^* \pluck\eta)$, where $e^*=e^*(X)$ are elements of the
  canonical basis of $\extp^{N_0-1}(F[X]_{d_0})^*$.\\

  For property (\ref{T:hilbertscheme:element}), fix some degree $d$
  and a polynomial $h(c,X)=\sum_{|\beta|=d}c_\beta X^\beta$ of degree
  $d$. If $d<d_0$, note that $h(c,X)$ belongs to $I^{(\eta)}$ if and
  only if all $X_i^{d_0}\cdot h(c,X)$ belong to $I^{(\eta)}$, for
  $I^{(\eta)}$ is saturated. Therefore, we may assume that the
  polynomial $h(c,X)$ has degree $d\ge d_0$, so $h(c,X)$ belongs to
  $I^{(\eta)}$ if and only if the $F$-vector space generated by
  $h(x,X)$ and all $M_\alpha\cdot(e^*\pluck\eta)$ where the $M_\alpha$
  are monomials of degree $d-d_0$ has dimension at most
  $\binom{d+n}{n}-Q(d)$. Again, the latter can be expressed by
  determinantal equations $S_d(c, \eta)=0$ in $c$ and $\eta$.
\end{proof}
\bigskip

We will see now how Theorem \ref{T:hilbertscheme} produces the minimal
tame formula in a given type $\tp_P(a/k)$, where $k$ is a \lc subfield
of a sufficiently saturated model $(K,E)$ and $a=(a_1,\dotsc,a_n)$ is
a tuple in $K$.

The homogeneous vanishing ideal $I$ of the tuple $(1,a_1,\dots,a_n)$
over the subfield $F=E\cdot k$ is saturated, so denote by $Q$ its
Hilbert polynomial. By Theorem \ref{T:hilbertscheme}, let $\hs$ be the
corresponding Hilbert scheme as well as $S$ and $S_d$ the associated
finite sets of polynomials. The ideal $I$ is of the form
$I=I^{(\eta)}$ for some $\eta$ in $\hs$. Clearing denominators, we may
assume that the coefficients of $\eta$ belong to the ring generated by
$E\cup k$. As in the proof of Proposition \ref{P:Trick_proof}, write
$\eta=\eta(e)$ for some is a tuple $\eta(Z)$ of linear forms over $k$
and a non-trivial tuple $e$ in $E$. Rewriting the coefficients of
$\eta$ with respect to a fixed basis of $k$ over $\lambda(k)$, which
remains linearly independent over $E$ by linear disjointness, we may
assume that $\eta(f)\ne 0$ for every non-zero tuple $f$ in $E$.

Denoting by $\gamma(\zeta)$ the homogeneous locus of $e$ over $k$, we
have the following:

\begin{theorem}\label{T:min_fmla_Hilb}
  The tame formula
  \[\phi(x)=\exists\zeta\in P\;\Bigl(
  \neg\,\zeta\doteq 0\;\land\;\gamma(\zeta)\;\land\;\hs(\eta(\zeta))
  \;\land\; S(1,x_1,\dotsc,x_n,\eta(\zeta))\doteq 0\Bigr)\] is the
  minimal tame formula in $\tp_P(a/k)$.
\end{theorem}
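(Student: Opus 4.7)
The plan has three parts: first, verify $\phi(a)$ holds; second, verify $\phi$ is (equivalent to) a tame formula; and third, establish minimality, which is the core content. For the first, I would substitute $\zeta = e$: non-triviality and $\gamma(e)$ hold by construction, $\eta(e) = \eta$ lies in $\hs$ by the choice of $\eta$, and $S(1, a_1, \ldots, a_n, \eta) = 0$ because, by Theorem \ref{T:hilbertscheme}(\ref{T:hilbertscheme:erzeugt}), the polynomials $S(X, \eta)$ generate the saturated ideal $I^{(\eta)}$, which vanishes at $(1, a)$. Tameness follows from Fact \ref{F:tame1} since every polynomial condition in the existential matrix is homogeneous in $\zeta$, using that $\eta(\zeta)$ is linear in $\zeta$ and that the defining equations of $\hs$ as well as the polynomials $S$ are homogeneous in their $\eta$-variable.

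For minimality, Proposition \ref{P:tame_instance} allows us to restrict attention to tame formulas with parameters in $k$. Let therefore
\[\psi(x) = \exists \zeta' \in P^s \Bigl(\neg\zeta' \doteq 0 \,\land\, \bigwedge_{j=1}^m q_j(x, \zeta') \doteq 0\Bigr)\]
belong to $\tp_P(a/k)$, with $q_j \in k[X_1, \ldots, X_n, Z']$ homogeneous in $Z'$. Homogenizing in $X$ via an auxiliary variable $X_0$ produces $\tilde q_j \in k[X_0, X_1, \ldots, X_n, Z']$, homogeneous of degree $d_j$ in $X$ and unchanged in $Z'$. A witness $f \in E^s \setminus \{0\}$ for $\psi(a)$ yields $\tilde q_j(X, f) \in I^{(\eta)}$, which by Theorem \ref{T:hilbertscheme}(\ref{T:hilbertscheme:element}) is expressed by polynomial identities $S_{d_j}(c_j(f), \eta) = 0$, where $c_j(Z')$ collects the coefficients of $\tilde q_j$ as a polynomial in $X$.

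I would then introduce the tame formula (over $k$)
\[\rho(\zeta) = \exists \zeta' \in P^s \Bigl(\neg\zeta' \doteq 0 \,\land\, \bigwedge_{j=1}^m S_{d_j}(c_j(\zeta'), \eta(\zeta)) \doteq 0\Bigr),\]
which belongs to $\tp_P(e/k)$ via the witness $\zeta' = f$. By Lemma \ref{L:tame=Zar}, $\rho(E)$ is Zariski closed over $\lambda(k) = k \cap E$, hence \emph{a fortiori} over $k$; since $\gamma$ is the minimal $k$-closed subset of $E^r$ containing $e$, we deduce $\gamma \subseteq \rho(E)$. Now take any $x \in \phi(K)$ with witness $e' \in E^r \setminus \{0\}$: the condition $\gamma(e')$ forces $\rho(e')$, producing some $f' \in E^s \setminus \{0\}$ with $\tilde q_j(X, f') \in I^{(\eta(e'))}$ for every $j$. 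Since $S(1, x, \eta(e')) = 0$ forces $(1, x_1, \ldots, x_n)$ to be a non-trivial zero of $I^{(\eta(e'))}$ (saturation is immaterial for vanishing at a point with non-zero first coordinate), we conclude $\tilde q_j(1, x, f') = q_j(x, f') = 0$, whence $\psi(x)$ holds.

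The main technical obstacle is transferring the witness pair $(e, f)$ to a fiber witness $(e', f')$ with $f' \in E$: Lemma \ref{L:tame=Zar} achieves this cleanly, by showing that $\rho$, which encodes ideal membership uniformly in $\zeta$ through the Hilbert scheme polynomials $S_{d_j}$, defines a Zariski closed subset of $E^r$ over $\lambda(k)$ that must contain the entire $k$-locus $\gamma$ of $e$, not merely $e$ itself.
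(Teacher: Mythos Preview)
Your argument is essentially the paper's own: encode membership of $\tilde q_j(X,\zeta')$ in $I^{(\eta(\zeta))}$ via the Hilbert-scheme polynomials $S_d$, bundle this into a tame formula $\rho(\zeta)$ satisfied by $e$, and invoke Lemma~\ref{L:tame=Zar} to transfer $\rho$ from $e$ to every $e'$ satisfying $\gamma$. One point to tighten: $\gamma$ is the \emph{homogeneous} locus of $e$ over $k$, not the minimal $k$-Zariski-closed subset of $E^r$ containing $e$ (the affine locus is strictly smaller), so the step ``$\gamma\subseteq\rho(E)$'' needs the final clause of Lemma~\ref{L:tame=Zar}---that $\rho(E)$ is cut out by \emph{homogeneous} equations over $\lambda(k)$---which does apply here since $S_{d_j}(c_j(\zeta'),\eta(\zeta))$ is homogeneous in $\zeta$ (because $\eta$ is linear in $\zeta$ and $S_{d_j}$ is homogeneous in its $\eta$-argument). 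The paper makes exactly this observation explicit.
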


\begin{proof}
  It is clear that the tame formula $\phi(x)$ belongs to $\tp_P(a/k)$,
  setting $\zeta=e$, since $S(X,\eta(e))$ is contained in
  $I=I^{\eta(e)}$.

 Assume now that we are given an instance of a tame formula
 \[\psi(x)=\exists\;\zeta'\in P\;\bigl(\neg\,\zeta'\doteq
 0\;\land\;\Land_{j=1}^m q_j(1,x,\zeta')\doteq 0\bigr)\] in
 $\tp_P(a/k)$ for some polynomials $q_\ell$ in $k[X,Z']$, homogeneous
 both in $X$ and in $Z'$. By Theorem \ref{T:hilbertscheme}
 (\ref{T:hilbertscheme:element}) there is for every degree $\deg(q_j)$
 a finite set $S_{q_j}(Z',Z)$ of polynomials over $k$, homogeneous
 both in $Z$ and in $Z'$, such that for all tuples $f$ and $f'$ in $E$
 with $\eta(f)$ in $\hs$, then
 \[q_j(X,f')\in I^{\eta(f)}\;\Leftrightarrow\;S_{q_j}(f',f)=0.\]

 Now, the tuple $a$ realises $\psi$, so there exists a non-trivial
 tuple $e'$ in $E$ such that $q_j(1,a, e')=0$ for all $1\le j\le m$ .
 Therefore, each polynomial $q_j(X,e')$ belongs to $I^{(\eta(e))}$, so
 $S_{q_j}(e',e)=0$, by the above. Hence, the tuple $e$ satisfies the
 tame formula
 \[\rho(\zeta)=\exists\;\zeta'\in E\;\bigl(\neg\,\zeta'\doteq
 0\;\land\;\Land_{j=1}^m S_{q_j}(\zeta',\zeta)\doteq 0\bigr).\] As $k$
 is \lc, Lemma \ref{L:tame=Zar} yields that for realisations in $E$
 the formula $\rho(\zeta)$ is equivalent to a finite system of
 equations $\sigma(\zeta)$ with coefficients in $k$, homogeneous in
 $\zeta$. In particular, every solution in $E$ of the homogeneous
 locus $\gamma(\zeta)$ of $e$ over $k$ is a solution of the system
 $\sigma(\zeta)$, and thus satisfies $\rho(\zeta)$.

 Given now a realisation $b$ of $\phi(x)$, we need to show that $b$
 also satisfies $\psi(x)$, which gives that $\phi$ is the minimal tame
 formula in $\tp_P(a/k)$. Since $b$ realises $\phi(x)$, there is
 non-zero tuple $f$ in $E$ which is a solution of $\gamma(\zeta)$ such
 that $\eta(f)$ lies in $\hs$ and $S(1,b,\eta(f))= 0$.

 As $f$ is a solution of $\gamma(\zeta)$, the above discussion yields
 that $f$ realises $\rho(\zeta)$, so there is a non-zero tuple $f'$ in
 $E$ such that \[ \Land_{j=1}^m S_{q_j}(f',f)= 0.\] We deduce that
 every polynomial $q_j(X,f')$ belongs to $I^{\eta(f)}$. Since
 $S(1,b,\eta(f))=0$ and the polynomials in $S(X,\eta(f)$ generate
 $I^{\eta(f)}$ as a saturated ideal, it follows that $q_j(1,b,f')=0$
 for all $1\le j\le m$, so $b$ realises $\psi(x)$, as desired.
\end{proof}

\end{document}